\newcommand{\mcal}{\mathcal}
\newcommand{\mbb}{\mathbb}
\newtheorem{theorem}{Theorem}[section]
\newtheorem{lemma}[theorem]{Lemma}
\newtheorem{corollary}[theorem]{Corollary}
\theoremstyle{definition}
\newtheorem{definition}[theorem]{Definition}
\theoremstyle{remark}
\newtheorem{remark}[theorem]{Remark}
  \newcounter{mnote}
  \let\oldmarginpar\marginpar
    \renewcommand\marginpar[1]{\-\oldmarginpar[\raggedleft\footnotesize #1]%
    {\raggedright\footnotesize #1}}
\numberwithin{equation}{section}  
\numberwithin{figure}{section}
\numberwithin{table}{section}
\newcommand{\dx}{\,{\rm d}x}
\newcommand{\R}{\mathcal{R}}
\newcommand{\V}{\mathcal{V}}
\newcommand{\B}{\mathcal{B}}
\newcommand{\E}{\mathcal{E}}
\newcommand{\F}{\mathcal{F}}
\newcommand{\I}{\mathcal{I}}
\newcommand{\N}{\mathcal{N}}
\newcommand{\T}{\mathcal{T}}
\renewcommand{\P}{\mathcal{P}}
\newcommand{\hf}{\frac{1}{2}}
\begin{document}
\title[Local Multilevel Preconditioners for Jump Coefficients Problems]
{Local Multilevel Preconditioners for Elliptic Equations \\with Jump
Coefficients on Bisection Grids}
\author[L. Chen]{Long Chen}
\email{chenlong@math.uci.edu}
\address{Department of Mathematics, University of California at Irvine, CA 92697, USA. }
\author[M. Holst]{Michael Holst}
\email{mholst@math.ucsd.edu}
\address{
Department of Mathematics, University of California at San Diego, CA 92093, USA. 
}
\author[J. Xu]{Jinchao Xu}
\email{xu@math.psu.edu}
\address{Department of Mathematics, 
Pennsylvania State University, University Park, PA 16802, USA. 
 }
\author[Y. Zhu]{Yunrong Zhu}
\email{zhu@math.ucsd.edu}
\address{
Department of Mathematics,
University of California at San Diego, CA 92093, USA. }

\keywords{Local Multilevel Preconditioners, Multigrid, BPX, Discontinuous Coefficients, Adaptive Finite Element Methods, PCG, Effective Condition Number}
\begin{abstract}
  The goal of this paper is to design optimal multilevel solvers for
  the finite element approximation of second order linear elliptic
  problems with piecewise constant coefficients on bisection grids. Local multigrid and BPX preconditioners  are constructed based on local smoothing only at the newest vertices and their immediate neighbors. The analysis of eigenvalue distributions for these local multilevel preconditioned systems shows that  there are only a fixed number of eigenvalues which are deteriorated by the large jump. The remaining eigenvalues are bounded uniformly with respect to the coefficients and the meshsize. Therefore, the resulting preconditioned conjugate gradient algorithm will converge with an asymptotic rate independent of the coefficients and logarithmically with respect to the meshsize. As a result, the overall computational complexity is nearly optimal.
\end{abstract}

\maketitle

\section{Introduction}

In this article, we construct robust multilevel preconditioners for the finite element discretization of second order linear elliptic equations with strongly discontinuous coefficients. We extend corresponding results on uniform grids~\cite{Xu.J;Zhu.Y2008} to locally refined grids obtained by bisection methods. We consider the following model problem :
\begin{equation}\label{eq:model}
\left\{\begin{array}{lll}
      &&-\nabla\cdot(a\nabla u)=f \mbox{ in } \Omega,\\
      &&u=g_D \mbox{ on } \Gamma_D, \;\;
      a\frac{\partial u}{\partial n}=g_N \mbox{ on } \Gamma_N
 \end{array}\right.
\end{equation}
where $\Omega\in \mathbb R^d$ is a polygon (for $d=2$) or
polyhedron (for $d=3$) with Dirichlet boundary $\Gamma_D$ and Neumann
boundary $\Gamma_N$ such that $\Gamma _D \cup \Gamma _N=\partial \Omega$. The diffusion coefficient $a=a(x)$ is piecewise constant. More precisely, the domain $\Omega$ is partitioned into $M$ open disjoint polygonal or polyhedral regions $\Omega_i\; (i=1,\cdots, M)$ and
$$a|_{\Omega_i}=a_i, \;\; i=1,\dots, M$$
where each $a_i$ is a positive constant. The regions $\Omega_{i}\; (i=1,\cdots M)$ may possibly have complicated geometry but we assume that they are completely resolved by an initial triangulation $\T_{0}.$ Our analysis can be carried
through to more general cases when $a(x)$ varies moderately in each
subdomain and to other types of boundary conditions in a straightforward way.

The problem \eqref{eq:model} belongs to the class of interface problems or transmission problems, which are relevant to many applications such as groundwater
flow \cite{Kees.C;Miller.C;Jenkins.E;Kelley.C2003},  electromagnetics
\cite{Heise.B;Kuhn.M1996}, semiconductor modeling
\cite{Coomer.R;Graham.I1996,Meza.J;Tuminaro.R1996}, and fuelcells
\cite{Wang.Z;Wang.C;Chen.K2001}. The coefficients in these applications may have large jumps across interfaces between regions with different material properties, i.e. $J(a) : = \max _i a_i / \min _i a_i \gg 1.$ Due to $J(a)$ and the mesh size, the finite element discretization of \eqref{eq:model} is usually very ill-conditioned, which leads to deterioration in the rate of convergence of multilevel and domain decomposition methods
\cite{Alcouffe.R;Brandt.A;Dendy.J;Painter.J1981,Graham.I;Hagger.M1999,Vuik.C;Segal.A;Meijerink.J1999}. 

In some special situations, one is able to show the (nearly) uniform convergence of the multilevel and (overlapping) domain decomposition methods (see \cite{Bramble.J;Xu.J1991,Wang.J1994,Wang.J;Xie.R1994,Dryja.M;Sarkis.M;Widlund.O1996,Oswald.P1999c} for examples). For general cases, one usually need some special techniques to obtain robust iterative methods, (cf. \cite{Chan.T;Wan.W2000,Scheichl.R;Vainikko.E2007,Graham.I;Lechner.P;Scheichl.R2007,Aksoylu.B;Graham.I;Klie.H;Scheichl.R2008}). Recently in~\cite{Xu.J;Zhu.Y2008,Zhu.Y2008}, we analyzed the eigenvalue distributions of the standard multilevel and overlapping domain decomposition preconditioned systems, and showed that there are only a small fixed number of eigenvalues that may deteriorate due 
to the discontinuous jump or mesh size, and that all the other eigenvalues are 
bounded below and above nearly uniformly with respect to the jump and mesh size. As a result, we proved that the convergence rate of the preconditioned conjugate gradient 
method is uniform with respect to the large jump, and depends logarithmically on mesh size. These results ensure that the standard multilevel and domain decomposition preconditioners are efficient and robust for finite element discretization of \eqref{eq:model} on {\it quasi-uniform grids}. In this paper, we extend our results to {\it locally refined grids}.

The discontinuity of diffusion coefficients causes a lack of regularity of the solution to \eqref{eq:model}, which in turn, leads to deterioration in the rate of convergence for finite element approximations over quasi-uniform triangulations. Adaptive finite element methods through local mesh refinement can be applied to recover the optimal rate of convergence~\cite{Cascon.J;Kreuzer.C;Nochetto.R;Siebert.K2008}. 
In order to achieve optimal computational complexity in adaptive finite element methods, it is imperative to design fast algorithms for solving the linear system of equations arising from the finite element discretization. The distinct feature of applying multigrid methods on locally refined meshes is that the number of nodes of nested meshes obtained by local refinements may not grow exponentially, violating one of the key properties of multilevel methods on uniform meshes that leads to optimal $\mcal O(N)$ complexity. Indeed, let $N$ be the number of unknowns in the finest space, the complexity of multilevel methods with global smoothers can be as bad as $\mcal O(N^2)$~\cite{Mitchell.W1992}. This prevents direct application of algorithms and theories developed in~\cite{Xu.J;Zhu.Y2008} for quasi-uniform grids to locally refined grids.

To achieve optimal $\mcal O(N)$ complexity, the smoothing step in each level must be restricted to the newly added unknowns and their neighbors (see \cite{Bai.D;Brandt.A1987,Bramble.J;Pasciak.J;Xu.J1990,Mitchell.W1992}). Such methods are referred to as {\it local multilevel methods} in~\cite{Bai.D;Brandt.A1987}. As an extreme case, one can preform the smoothing only on newly added nodes turning a coarse grid to a fine grid. The resulting method is known as the hierarchical basis method~\cite{Yserentant.H1990,Bank.R;Dupont.T;Yserentant.H1988}. In two dimensions, hierarchical basis methods are proven to be robust for jump coefficient problems on locally refined meshes (cf.~\cite{Bank.R;Dupont.T;Yserentant.H1988}). In three dimensions, however, classic multilevel and domain decomposition methods, including the hierarchical basis multigrid methods, deteriorate rapidly due to the presence of discontinuity of coefficients. To obtain robust rates of convergence for multigrid methods, one has to use special coarse spaces~\cite{Dryja.M;Sarkis.M;Widlund.O1996,Sarkis.M1997} or assume that the distribution of diffusion coefficients satisfies the so called quasi-monotone condition~\cite{Dryja.M;Sarkis.M;Widlund.O1996}. Therefore the three dimensional case is much more difficult. There are other works~\cite{Aksoylu.B;Holst.M2006,Hiptmair.R;Zheng.W2009a} on optimal complexity of local multilevel methods in three dimensions, but the problems with discontinuous coefficients remain open. 

In this article, we shall design and prove the efficiency and robustness of local multilevel preconditioners for the finite element discretization of problem \eqref{eq:model} on bisection grids -- one class of locally refined grids.  In these preconditioners, we use a global smoothing in the finest mesh; and for each newly added node, we perform smoothing only for three vertices - the new vertex and its two parents vertices (the vertices sharing the same edge with the new vertex). We analyze the eigenvalue distribution of the multilevel preconditioned matrix, and prove that there are only a fixed number of small eigenvalues deteriorated by the coefficient and mesh-size; the other eigenvalues are bounded nearly uniformly. Thus, the resulting preconditioned conjugate gradient algorithm converges uniformly with respect to the jump and logarithmically with respect to the mesh size of the discretization.  We establish our results of this type in both two and three dimensions. 

To emploit the geometric structure of bisection grids, we use the decomposition of bisection grids developed in the recent work \cite{Chen.L;Nochetto.R;Xu.J2007,Xu.J;Chen.L;Nochetto.R2009}. This approach enables us to introduce a natural decomposition of the finite element space into subspaces consisting only the newest vertices and their two parents vertices. In the analysis of these local multilevel preconditioners, one of the key ingredient is the \emph{stable decomposition} (see Theorem~\ref{th:stda}). For the standard multilevel preconditioners on uniform mesh, in \cite{Xu.J;Zhu.Y2008} we used the approximation and stability properties of the weighted $L^{2}$ projection (cf. \cite{Bramble.J;Xu.J1991}) to construct a stable decomposition. This weighted $L^{2}$ projection is no longer applicable for the local multilevel preconditioners, since it is a global projection. In order to preserve the local natural of the highly graded meshes, we introduce a local interpolation operator, which we manage to prove similar approximation and stability properties (see Theorem~ \ref{thm:wl2app} and \ref{thm:wl2}) as the weighted $L^{2}$-projection. Our local quasi-interpolation operator and the corresponding analysis is more delicate than that in~\cite{Chen.L;Nochetto.R;Xu.J2007,Xu.J;Chen.L;Nochetto.R2009} for the Poisson equation.  
We should remark that due to this space decomposition, we are able to remove the assumption, {\it nested local refinement}, which is used in most existing work on multilevel methods on local refinement grids~\cite{Aksoylu.B;Holst.M2006,Hiptmair.R;Zheng.W2009a}. 

The rest of the paper is organized as follows. In Section~\ref{sec:pre}, we give
some notation and recall some fundamental results as in
\cite{Xu.J;Zhu.Y2008}. In Section~\ref{sec:decomp}, we study bisection grids,
and review some technical tools from
\cite{Chen.L;Nochetto.R;Xu.J2007,Xu.J;Chen.L;Nochetto.R2009}. Here we restrict ourself to a
kind of special bisection scheme, namely the newest vertex bisection. Then
in Section~\ref{sec:decomp}, we study some technical results of space
decomposition, and present the optimal/stable decomposition and the
strengthened Cauchy-Schwarz inequality on bisection grids. In Section~\ref{sec:precond}, we analyze multilevel preconditioners, i.e., the BPX
preconditioner and the multigrid $V$-cycle preconditioner, and prove convergence results for the preconditioned conjugate gradient
algorithm. In Section~\ref{sec:num}, we present numerical experiments to support our theoretical results. 

Throughout the article, we will use the following short notation, $x\lesssim y$ means $x\le
Cy,$ $x\gtrsim y$ means $x\ge cy$ and $x\eqsim y$ means $cx\leq y\le
Cx$ where $c$ and $C$ are generic positive constants independent of
the variables appearing in the inequalities and any other parameters
related to mesh, space and coefficients.

\section{Preliminaries}
\label{sec:pre}
In this section, we introduce some notation, set up our problem, and review briefly some facts about the preconditioned conjugate gradient algorithm. 

\subsection{Notation and Problem}
Given a set of positive constants $\{a_i\}_{i=1}^M,$ we define the following weighted inner products on the space $H^1(\Omega)$
$$(u,v)_{0,a}=\sum_{i=1}^M a_i (u, v)_{L^2(\Omega_i)}, \;\;\mbox{and}\;\;
(u,v)_{1,a}=\sum_{i=1}^M a_i(\nabla u,\nabla v)_{L^2(\Omega_i)}$$ 
with the induced weighted $L^{2}$ norm  $\|\cdot\|_{0,a},$ and the weighted $H^{1}$-seminorm $|\cdot|_{1,a},$ respectively. We denote by $$\|u\|_{1,a}=\left(\|u\|_{0,a}^2 + |u|_{1,a}^2\right)^{\hf},$$
and the related inner product and the induced energy norm by
$$
(u,v)_A=A(u,v) :=(u,v)_{1,a},
\;\quad \|u\|_A=\sqrt{A(u,u)}.
$$

To impose the Dirichlet boundary condition in \eqref{eq:model}, we define 
$$
H^1_{g_D,\Gamma _D} = \{v\in H^1(\Omega): v|_{\Gamma _D} = g_D \hbox{ in the trace sense}\},
$$
and $H^1_{D} := H^1_{0,\Gamma _D}$.  Given a shape regular triangulation ${\T}_h$, which could be highly graded, we define $\V_{h}$ as the standard piecewise linear and global continuous finite element space on $\T_h$. 
Given $f\in H^{-1}(\Omega)$ and $g_N\in H^{1/2}(\Gamma _N)$, the linear finite element approximation of \eqref{eq:model} is the function $u\in \V_h\cap H^1_{g_D,\Gamma _D},$ such
that
\begin{equation}\label{eq:fem1}
A(u,v)=\langle f,v\rangle +\int_{\Gamma_N} g_N v, \quad \hbox{for all } v\in \V_h\cap H^1_{D}.
\end{equation}
Given any $u_0\in \V_h\cap H^1_{g_D,\Gamma _D}$, the problem \eqref{eq:fem1} is equivalent to finding $u\in \V_h\cap H^1_{D}$ such that
\begin{equation}\label{eq:fem2}
A(u,v) = \langle f,v\rangle +\int_{\Gamma_N} g_N v - A(u_0,v),\;\; \forall v\in \V_{h}\cap H_D^1.
\end{equation}
We thus consider the space $\V_{h,D}:=\V_h\cap H^1_{D}$. 
The bilinear form $A(\cdot,\cdot)$ will then introduce a symmetric positive definite (with respect to standard $L^2$-inner product) operator, still denoted by $A$, from $\V_{h,D}$ to $\V_{h,D}$ as
$$
(Au, v) = A(u,v).
$$
Define $b\in \V_{h,D}$ as
$$
(b, v) = \langle f,v\rangle + \int_{\Gamma_N} g_N v - A(u_0,v)\quad \forall v\in \V_{h,D}.
$$
We then get the following operator equation on $\V_{h,D}$
\begin{equation}\label{eq:eq}
   Au = b.
\end{equation}
For simplicity, in the remainder of the paper, we should omit the subscript $D$ in $\V_{h,D}$ without ambiguity. 

We are interested in solving equation \eqref{eq:eq} by
the preconditioned conjugate gradient methods with BPX and multigrid preconditioners. Let us now review briefly some basic results concerning the preconditioned conjugate gradient method. 

\subsection{Preconditioned Conjugate Gradient Method}
Let $B$ be a symmetric positive definite (SPD) operator. Applying it
to both sides of \eqref{eq:eq}, we get an equivalent equation
\begin{equation}\label{eq:BA}
BAu=Bb.
\end{equation}
We apply the conjugate gradient method to solve \eqref{eq:BA} and the resulting method is known as the \emph{preconditioned conjugate gradient (PCG)} method, where $B$ is called a \emph{preconditioner}.

Let  $\kappa(BA)=\lambda_{\max}(BA)/\lambda_{\min}(BA)$ be the (generalized) condition number of the preconditioned system $BA.$ Starting from an arbitrary initial guess $u_0$, we have the following well known convergence rate estimate for the $k$th iteration $u_{k}\; (k\ge 1)$ in PCG (see e.g.~\cite{Saad.Y2003})
$$
\frac{\|u-u_k\|_A}{\|u-u_0\|_A} \leq 2\left(\frac{\sqrt{\kappa(BA)}-1}{\sqrt{\kappa(BA)}+1}\right)^{k}.
$$
So if the condition number $\kappa (BA)$ is uniformly bounded, then PCG algorithm converges uniformly. Here the uniformity means the independence of the size of the matrix $A$. Later on, when $A$ is related to equation \eqref{eq:model}, we shall also discuss the uniformity of convergence with respect to the jump of diffusion coefficients.

If there are some isolated small or large eigenvalues, we can sharpen the above convergence rate estimate as stated in the following theorem. 
\begin{theorem}\cite{Axelsson.O2003}\label{th:pcg}
Suppose that $\sigma(BA)=\sigma_0(BA)\cup \sigma_1(BA)$ such that
there are $m$ elements in $\sigma_0(BA)$ and $\alpha \leq \lambda \leq \beta$ for
each $\lambda\in \sigma_1(BA).$ Then
\begin{equation}\label{eq:pcg}
\frac{\|u-u_k\|_A}{\|u-u_0\|_A}\leq 2K
\left(\frac{\sqrt{\beta/\alpha}-1}{\sqrt{\beta/\alpha}+1}\right)^{k-m},
\end{equation}
where
$$
K=\max_{\lambda\in\sigma_1(BA)}\prod_{\mu\in\sigma_0(BA)}
\left|1-{\lambda\over\mu}\right|.
$$
\end{theorem}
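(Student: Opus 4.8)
The plan is to combine the well-known optimality of the conjugate gradient iterates with a carefully chosen polynomial. First, since $A$ and $B$ are symmetric positive definite in the $L^2$-inner product, the preconditioned operator $BA$ is symmetric positive definite in the $A$-inner product $(\cdot,\cdot)_A$ (indeed $(BAu,v)_A=(ABAu,v)=(u,BAv)_A$ and $(BAu,u)_A=(B(Au),Au)>0$ for $u\neq 0$); hence $\sigma(BA)$ consists of positive real eigenvalues and, by the functional calculus, $\|p(BA)v\|_A\le\big(\max_{\lambda\in\sigma(BA)}|p(\lambda)|\big)\|v\|_A$ for every polynomial $p$. The CG method applied to \eqref{eq:BA} produces the iterate $u_k$ minimizing the $A$-norm of the error over the Krylov subspace $u_0+\mathrm{span}\{(BA)^jB r_0:0\le j\le k-1\}$ (with $r_0=b-Au_0$), which yields the standard characterization
$$
\|u-u_k\|_A \;\le\; \Big(\min_{\substack{p\in\P_k\\ p(0)=1}}\ \max_{\lambda\in\sigma(BA)}|p(\lambda)|\Big)\,\|u-u_0\|_A,
$$
where $\P_k$ denotes the polynomials of degree at most $k$.

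The key step is to exhibit a good competitor in this minimum. I would take $p=q\,r$, where
$$
q(t)=\prod_{\mu\in\sigma_0(BA)}\Big(1-\frac{t}{\mu}\Big)
$$
is the degree-$m$ polynomial that annihilates the ``bad'' eigenvalues $\sigma_0(BA)$ and is normalized by $q(0)=1$, and $r$ is the degree-$(k-m)$ polynomial obtained by affinely mapping the Chebyshev polynomial $T_{k-m}$ onto $[\alpha,\beta]$ and rescaling so that $r(0)=1$. Then $p\in\P_k$, $p(0)=1$, and $p$ vanishes on $\sigma_0(BA)$, so the maximum over $\sigma(BA)=\sigma_0(BA)\cup\sigma_1(BA)$ reduces to a maximum over $\sigma_1(BA)$, where
$$
|p(\lambda)| = |q(\lambda)|\,|r(\lambda)| \le \Big(\max_{\lambda\in\sigma_1(BA)}|q(\lambda)|\Big)\Big(\max_{t\in[\alpha,\beta]}|r(t)|\Big).
$$
The first factor is exactly the constant $K$ of the statement, since $|q(\lambda)|=\prod_{\mu\in\sigma_0(BA)}|1-\lambda/\mu|$.

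For the second factor I would invoke the classical Chebyshev minimax estimate: writing $\kappa=\beta/\alpha$, the normalization gives $\max_{t\in[\alpha,\beta]}|r(t)| = 1/\bigl|T_{k-m}\bigl(\tfrac{\kappa+1}{\kappa-1}\bigr)\bigr|$, and from $T_n\bigl(\tfrac{\kappa+1}{\kappa-1}\bigr)\ge\tfrac12\big(\tfrac{\sqrt\kappa+1}{\sqrt\kappa-1}\big)^n$ this factor is at most $2\big(\tfrac{\sqrt{\beta/\alpha}-1}{\sqrt{\beta/\alpha}+1}\big)^{k-m}$. Chaining the three displays gives \eqref{eq:pcg}. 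There is no genuine obstacle in this argument: the only points that need care are the $A$-self-adjointness of $BA$ that legitimizes the functional-calculus bound, and the degree bookkeeping — constraining $p$ to degree at most $k$ leaves only degree $k-m$ for the Chebyshev factor, which is why the exponent in \eqref{eq:pcg} is $k-m$ and why the estimate is informative precisely when $k\ge m$.
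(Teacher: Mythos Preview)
Your argument is correct and is precisely the standard proof of this classical estimate: CG polynomial optimality in the $A$-norm, a competitor polynomial that factors as an annihilator of $\sigma_0(BA)$ times a shifted Chebyshev polynomial on $[\alpha,\beta]$, and the usual lower bound $T_n\big(\tfrac{\kappa+1}{\kappa-1}\big)\ge\tfrac12\big(\tfrac{\sqrt\kappa+1}{\sqrt\kappa-1}\big)^n$. The paper itself does not give a proof of this theorem; it is quoted from Axelsson~\cite{Axelsson.O2003} and used as a black box, so there is nothing to compare beyond noting that what you wrote is exactly the argument behind the cited reference.
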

If there are only $m$ small eigenvalues in $\sigma_0(BA)$, say
$$0<\lambda_1\le\lambda_2\dots\le\lambda_i\ll
\lambda_{m+1}\le\dots\leq \lambda_n,$$ then
\begin{equation}\label{eq:m}
  K=\prod_{i=1}^m
\left|1-{\lambda_{n}\over\lambda_i}\right|\le
\left({\lambda_{n}\over\lambda_1}-1\right)^m=\left(\kappa(BA)-1\right)^m.
\end{equation}
Therefore the convergence rate of PCG algorithm will be dominated by
the factor $(\sqrt{\beta/\alpha}-1)/(\sqrt{\beta/\alpha}+1),$ i.e. by $\beta/\alpha$ where
$\beta=\lambda_{n}(BA)$ and $\alpha=\lambda_{m+1}(BA).$ We define the ``effective condition number" as follows.
\begin{definition}Let $\V$ be an $n$-dimensional Hilbert space and $T: \V\to \V$ be a symmetric and positive definite operator. For any integer $m\in [1, n-1]$, the $m$th \emph{effective condition number} of $T$ is defined by
$$
\kappa_{m}(T)=\frac{\lambda_{\max}(T)}{\lambda_{m+1}(T)}
$$
where $\lambda_{m+1}(T)$ is the $(m+1)$-th minimal eigenvalue of $T.$
\end{definition}
As a corollary of Theorem \ref{th:pcg}, we have
\begin{equation}\label{eq:conv}
\frac{\|u-u_k\|_A}{\|u-u_0\|_A}\leq 2(\kappa(BA)-1)^m\left(\frac{\sqrt{\kappa_{m}(BA)}-1}{\sqrt{\kappa_{m}(BA)}+1}\right)^{k-m}.
\end{equation}
From \eqref{eq:conv}, given a tolerance $\varepsilon,$ the number of
iterations of the PCG method to reduce the relative error below the tolerance $\varepsilon$ is (cf.~\cite{Axelsson.O1994,Axelsson.O2003})
$$
m+\left\lceil\left(\log\left(\frac{2}{\varepsilon}\right)+ m|\log (\kappa(BA)-1)|\right)/c_0\right\rceil,
$$
where $c_0=\log\left((\sqrt{\kappa_{m}(BA)} +1)/(\sqrt{\kappa_{m}(BA)} -1)\right).$ Therefore if there exists an $m\geq 1$ such that the $m$th effective condition number is bounded uniformly, then the PCG algorithm will still converge almost uniformly, even though the standard condition number $\kappa (BA)$ might be large.

To estimate the effective condition number, in particular $\lambda_{m+1}(A)$, we use a fundamental tool known as the \emph{Courant
``minimax" principle} (see e.g.~\cite{Golub.G;Van-Loan.C1996}).
\begin{theorem}\label{th:minimax}
  Let $\V$ be an $n$-dimensional Hilbert space with inner product $(\cdot,\cdot)_{\V}$ and $T: \V\to\V$ a symmetric positive operator on $\V.$ Suppose
  $\lambda_1\le\lambda_2\le\cdots\le\lambda_n$ are the eigenvalues of $T,$ then
  $$
  \lambda_{m+1}(T)=\max_{\dim (S)=m}\min_{0\neq v\in
  S^{\perp}}\frac{(Tv,v)_{\V}}{(v,v)_{\V}}
  $$ 
  for $i=1,2,\cdots, n-1.$
  Especially, for any subspace $\V_0\subset \V$ with ${\rm
  dim}(\V_0)=n-m$
  \begin{equation}\label{eq:mm}
    \lambda_{m+1}(T)\ge \min_{0\neq v\in \V_0}\frac{(Tv,v)_{\V}}{(v,v)_{\V}}.
  \end{equation}
\end{theorem}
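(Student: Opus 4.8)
The plan is to reduce everything to the classical Courant--Fischer argument by diagonalizing $T$. Since $T$ is symmetric and positive on the finite-dimensional Hilbert space $(\V,(\cdot,\cdot)_{\V})$, the spectral theorem provides an $(\cdot,\cdot)_{\V}$-orthonormal basis $\{\phi_1,\dots,\phi_n\}$ with $T\phi_i=\lambda_i\phi_i$ and $\lambda_1\le\cdots\le\lambda_n$. For $v=\sum_{i=1}^n c_i\phi_i\neq 0$ the Rayleigh quotient $R(v):=(Tv,v)_{\V}/(v,v)_{\V}=\big(\sum_i\lambda_i c_i^2\big)/\big(\sum_i c_i^2\big)$ is a convex combination of the eigenvalues, and every bound below will come from controlling which coefficients $c_i$ are permitted to be nonzero. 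All orthogonal complements are taken with respect to $(\cdot,\cdot)_{\V}$, so that membership of $v$ in a coordinate subspace $\mathrm{span}\{\phi_j : j\in J\}$ is exactly the condition that $c_i=0$ for $i\notin J$.

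First I would establish the inequality $\max_{\dim(S)=m}\min_{0\neq v\in S^{\perp}}R(v)\le\lambda_{m+1}$. Fix any $S$ with $\dim(S)=m$ and set $W=\mathrm{span}\{\phi_1,\dots,\phi_{m+1}\}$. Since $\dim(W)+\dim(S^{\perp})=(m+1)+(n-m)=n+1>n=\dim(\V)$, the intersection $W\cap S^{\perp}$ contains a nonzero vector $v$; this $v$ satisfies $c_i=0$ for $i>m+1$, so $R(v)$ is a convex combination of $\lambda_1,\dots,\lambda_{m+1}$ and hence $R(v)\le\lambda_{m+1}$. Therefore $\min_{0\neq v\in S^{\perp}}R(v)\le\lambda_{m+1}$ for every such $S$, and taking the maximum over all $m$-dimensional $S$ gives the claimed upper bound. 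For the matching lower bound I would exhibit an optimal choice: take $S^{\star}=\mathrm{span}\{\phi_1,\dots,\phi_m\}$, so that $(S^{\star})^{\perp}=\mathrm{span}\{\phi_{m+1},\dots,\phi_n\}$; for any $0\neq v\in(S^{\star})^{\perp}$ the quotient $R(v)$ is a convex combination of $\lambda_{m+1},\dots,\lambda_n$, hence $R(v)\ge\lambda_{m+1}$, with equality at $v=\phi_{m+1}$. Thus $\min_{0\neq v\in(S^{\star})^{\perp}}R(v)=\lambda_{m+1}$, and combining the two halves proves the identity.

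The inequality \eqref{eq:mm} is then immediate: given any subspace $\V_0\subset\V$ with $\dim(\V_0)=n-m$, set $S=\V_0^{\perp}$, so $\dim(S)=m$ and $S^{\perp}=\V_0$; the already proven upper bound (equivalently, the identity) yields $\min_{0\neq v\in\V_0}R(v)\le\lambda_{m+1}(T)$. I do not anticipate a genuine obstacle, as this is the standard minimax theorem; the only steps requiring a little care are the nonconstructive dimension count producing a nonzero vector in $W\cap S^{\perp}$, and keeping the orthogonal complements, the eigenbasis, and the Rayleigh quotient all expressed in the single inner product $(\cdot,\cdot)_{\V}$ so that the ``coefficient vanishing'' characterizations used above are exact.
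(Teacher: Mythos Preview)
Your argument is the standard Courant--Fischer proof and is correct: the dimension count forcing $W\cap S^{\perp}\neq\{0\}$ gives the upper bound, the explicit eigenspace choice $S^{\star}$ gives the lower bound, and \eqref{eq:mm} follows by specializing. Note, however, that the paper does not actually supply a proof of this theorem; it is simply quoted as the classical minimax principle with a reference to Golub and Van~Loan, so there is nothing in the paper to compare your argument against.
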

If both $A$ and $B$ are SPD operators, then $BA$ is SPD in the inner product induced by $B^{-1}$ and $A$. Below, we shall apply Theorem \ref{th:minimax} to $T = BA$ and $(u,v)_{\V}:=(B^{-1}u,v)_{L^2}$. Therefore if we have an inequality of the type $(Av,v)\geq c(B^{-1}v,v)$ for all $v$ in a suitable subspace $\V_0$ with ${\rm dim}(\V_0)=n-m$, we can get a lower bound of $\lambda _{m+1}(BA)$.

\section{Local Quasi-interpolation}
\label{sec:interpolation}
The theoretical justification of the robustness of multilevel preconditioners relies on establishing approximation and stability properties of certain interpolation operators. There are two difficulties: one is the locality and stability and another is the robustness with respect to the coefficient. 

The weighted $L^2$-projection $Q_h^a: L^2(\Omega)\to \V_h$ defined by
$
(Q_h^a u, v_h)_{0,a}=(u, v_h)_{0,a} \;\; \forall v_h\in \V_h
$ 
was used in~\cite{Xu.J;Zhu.Y2008,Zhu.Y2008} for the case of uniform refinement.
For the analysis of local multilevel preconditioners, the interpolation operator should preserve certain local structure. Therefore, the weighted $L^{2}$-projection, which is a global operator, is not appropriate. On the other hand, the standard nodal interpolation operator is local but not stable in the energy norm. 
Local quasi-interpolation, such as Scott-Zhang operators~\cite{Scott.R;Zhang.S1990}, are developed to achieve both locality and stability.

However, the stability constant  will in general depend on the jump of diffusion coefficients if we apply the standard quasi-interpolation globally on the whole domain. The value at a vertex is usually defined using a simplex in the patch of this vertex and thus depends on the diffusion coefficient in this simplex. For a vertex shared by several subdomains, this leads to the dependence of the ratio of coefficients. One remedy is to apply the quasi-interpolation on each subdomain and chose a sub-simplex in the quasi-interpolation. Indeed in the original paper \cite{Scott.R;Zhang.S1990}, a $(d-1)$ sub-simplex is used. Such modification is suitable for the interior vertex relative to interfaces for which a common $(d-1)$ sub-simplex on the interface can be used to glue quasi-interpolations in different regions. For vertices on the boundary of the interface, i.e., edges in 3-D and vertices in 2-D, in general there is no common $(d-1)$ sub-simplex but only $(d-2)$ sub-simplex. The trace of $H^1$ functions is not even well defined on $(d-2)$ sub-simplex. For example, the function value of a $H^1$ function at a point can be changed without changing this function. In the discrete level, it can be shown that the trace of a finite element function on a $(d-2)$ sub-simplex can be almost bounded by its Sobolev norm inside. Therefore we can simply set the function values at the vertices of $(d-2)$ sub-simplex to zero to glue quasi-interpolation operators defined in different domain.

Below, we construct a quasi-interpolation operator by gluing Scott-Zhang operators in each subdomains and interfaces, and show that it is stable uniformly with respect to the jump of coefficients and nearly uniform to the mesh size of the triangulation. We stress that this local quasi-interpolation operator is designed for the analysis only, and is not needed in the practical implementation.

\subsection{Notation on Triangulations}
Let us introduce some notation related to the domain and its triangulations. As we mentioned earlier, we assume that the polygonal or polyhedral subdomains $\Omega_i\; (i=1,\cdots, M)$ are open, disjoint to each other, and satisfy $\cup_{i=1}^M
\overline{\Omega}_i=\overline{\Omega}.$ We denote $\Gamma_{ij} = \partial\Omega_{i}\cap \partial \Omega_{j},$ or simply $\Gamma$ if without ambiguity, as the \emph{interface} between two subdomains $\Omega_{i}$ and $\Omega_{j}.$ The subdomains $\Omega _i\; (i=1,\cdots M)$ may possibly have complicated geometry but we assume that they are resolved by an initial \emph{conforming} triangulation $\T_{0}.$ Recall that a triangulation $\T$ is called \emph{conforming} if the intersection of any two elements $\tau$ and $\tau'$ in $\T$ either consists of a common vertex, edge, face (when $d=3$), or empty.

Let $\N, \; \E $ and $\F$ (when $d=3$) denote the set of vertices, edges, and faces of $\T$ respectively. For each vertex $p\in \N,$ we define local patch $\omega_{p} := \cup_{\tau\ni p} \tau$ and, for $\tau\in \T$, $\omega_{\tau} = \cup_{p\in \tau} \omega_{p}$. Similarly, on the $(d-1)$ dimensional interface $\Gamma$, $o_{p}, \; o_{e}$ and  $o_{f}$ denote the intersection of corresponding local patches and the interface. The linear finite element space associated to $\T$ is denoted by $\V(\T),$ or simply $\V.$ More generally, for any subset $\mathcal{S}\subset \T,$ $\V(\mathcal{S})$  denote the finite element subspace restricted to the subset $G$. Similarly, we should denote $\N(G)\subset \N,$ $\E(G)\subset \E$ and $\F(G)\subset \F$ as the set of vertices, edges, and faces in $\overline{G}\subset \overline{\Omega},$ respectively.

For each element $\tau \in \T,$ we define $h_{\tau}= |\tau|^{1/d}$ and $\rho_{\tau}$ for the radius of its inscribed ball. In the whole paper, we assume that the triangulation is \emph{shape regular} in the sense $h_{\tau} \eqsim \rho_{\tau}.$
Let $h$ denote the piecewise constant mesh size function with $h|_{\tau} = h_{\tau},$ and $h_{\min} := \min_{\tau\in \T} h_{\tau}.$ We should also denote $h_{e}$ by the length of an edge $e\in \E$ and $h_{f}$ by the diameter of a face $f\in \F.$ Moreover, we define $h_{p}$ as the diameter of the local patch $\omega_{p}.$ By the shape regularity assumption, for all $e,f,\tau \subset \omega _p$, we have  
$ h_{p} \eqsim h_{e} \eqsim h_{f} \eqsim h_{\tau}.$

\subsection{Technical Lemmas}
For completeness here, we quote some technical lemmas from \cite{Bramble.J;Xu.J1991}, which will be used later for proving the approximation and stability of our  local interpolation operator.

In two dimensions, it is well known that $H^1(\Omega)$ is not embedded into $L^{\infty}(\Omega)$. But for finite element functions, we can control the $L^{\infty}$ norm by its $H^1$-norm with a factor $|\log h_{\min}|^{1/2}$. 
\begin{lemma}[{\cite[Lemma 2.3]{Bramble.J;Xu.J1991}}]
\label{lm:2dtrace-discrete}
	For any subdomain $\Omega_{i} \subset\mathbb{R}^{2}$, let $\V(\Omega_{i})$ be the finite element space based on a shape-regular triangulation $\mcal T$ of $\Omega_{i}$. Then for all $v\in \V(\Omega_{i}),$ it satisfies
	$$\|v\|_{L^{\infty}(\Omega_{i})} \lesssim \left |\log \frac{H_{i}}{h_{\min}}\right |^{1/2} \left(|v|_{H^{1}(\Omega_{i})}  + H_{i}^{-1} \|v\|_{L^{2}(\Omega_{i})} \right),$$
	where $H_{i} = {\rm diam} (\Omega_{i})$ and $h_{\min} := \min_{\tau\in \T} h_{\tau}.$
\end{lemma}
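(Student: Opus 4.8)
The plan is to reduce the discrete $L^\infty$ bound to a known continuous interpolation-type inequality combined with an inverse estimate on a single element. First I would scale out the domain size: by the change of variables $\hat x = x/H_i$ we may assume $\mathrm{diam}(\Omega_i)\eqsim 1$, so that the claimed inequality becomes $\|v\|_{L^\infty(\Omega_i)}\lesssim |\log h_{\min}|^{1/2}\,\|v\|_{H^1(\Omega_i)}$, with the two terms on the right combining into the full $H^1$ norm; the general statement follows by undoing the scaling and tracking the powers of $H_i$. Let $\tau_0$ be the element on which $|v|$ attains (or nearly attains) its maximum, and let $p$ be a vertex of $\tau_0$; since $v$ is piecewise linear, $\|v\|_{L^\infty(\Omega_i)}$ is comparable to $\max_{q\in\N}|v(q)|$, so it suffices to bound a nodal value $|v(p)|$.

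The key step is a Sobolev-type embedding with a logarithmic constant. I would use the fact that in two dimensions, for a fixed reference configuration, $W^{1,q}\hookrightarrow L^\infty$ for any $q>2$ with embedding constant of order $(q-2)^{-1/2}$, together with the discrete inverse inequality $\|v\|_{W^{1,q}(\Omega_i)}\lesssim h_{\min}^{-1+2/q}\|v\|_{H^1(\Omega_i)}$ (valid elementwise and summed, using shape regularity). Optimizing the exponent $q$ as a function of $h_{\min}$ — the natural choice being $q = 2 + 1/|\log h_{\min}|$ — balances the growing inverse-estimate factor $h_{\min}^{2/q-1} = e$ against the embedding constant $\sim (q-2)^{-1/2} = |\log h_{\min}|^{1/2}$, which produces exactly the claimed $|\log h_{\min}|^{1/2}$ factor. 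Alternatively, and perhaps more in the spirit of \cite{Bramble.J;Xu.J1991}, one can argue directly: cover $\omega_p$ by $\mathcal O(|\log h_{\min}|)$ dyadic annuli, estimate the difference of averages of $v$ over successive annuli by Cauchy--Schwarz using $|v|_{H^1}$ on each annulus, sum the telescoping series, and control the innermost average (over a ball of radius $\eqsim h_{\min}$) against $\|v\|_{L^\infty}$ via an inverse estimate, absorbing this last term because it carries a small coefficient.

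The main obstacle I anticipate is making the logarithmic factor sharp and uniform: naive use of the embedding $H^1\not\hookrightarrow L^\infty$ gives nothing, and a crude inverse estimate $\|v\|_{L^\infty}\lesssim h_{\min}^{-1}\|v\|_{L^2}$ gives the wrong (polynomial) power of $h_{\min}$, so the whole content is in the careful trade-off just described, carried out uniformly over all shape-regular triangulations of $\Omega_i$. A secondary technical point is that $\Omega_i$ is a general polygon, not a convex or smooth domain, so the annulus/covering argument must be localized to the star $\omega_p$ around the extremal vertex (where the geometry is that of a shape-regular patch) rather than applied on $\Omega_i$ globally; since we only need a \emph{pointwise} bound at $p$, localization to $\omega_p$ is harmless. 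Once the single-patch estimate $|v(p)|\lesssim |\log h_{\min}|^{1/2}\|v\|_{H^1(\omega_p)}$ is in hand, taking the maximum over vertices and recalling $\|v\|_{H^1(\omega_p)}\le\|v\|_{H^1(\Omega_i)}$ finishes the proof; reinstating $H_i$ by scaling gives the stated form with the $H_i^{-1}\|v\|_{L^2}$ term.
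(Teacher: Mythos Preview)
The paper does not supply its own proof of this lemma; it is quoted verbatim from \cite[Lemma~2.3]{Bramble.J;Xu.J1991} as a technical input, so there is nothing in the present paper to compare your proposal against.

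That said, your proposal is correct and is essentially the argument given in the original Bramble--Xu reference. After scaling to $H_i\eqsim 1$, one combines a critical Sobolev-type inequality with an inverse estimate and optimizes the integrability exponent. Bramble and Xu phrase the Sobolev step as $\|u\|_{L^p}\lesssim \sqrt{p}\,\|u\|_{H^1}$ for $p\ge 2$ and then use the inverse inequality $\|v\|_{L^\infty}\lesssim h_{\min}^{-2/p}\|v\|_{L^p}$, choosing $p\eqsim |\log h_{\min}|$; your version via the Morrey embedding $W^{1,q}\hookrightarrow L^\infty$ with constant $\sim (q-2)^{-1/2}$ together with $\|\nabla v\|_{L^q}\lesssim h_{\min}^{2/q-1}|v|_{H^1}$ and the choice $q-2 = 1/|\log h_{\min}|$ is an equivalent repackaging of the same trade-off. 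The dyadic-annulus telescoping you sketch as an alternative is also standard and yields the same bound. Your caveat about localizing to a shape-regular star $\omega_p$ (rather than the possibly irregular polygon $\Omega_i$) is the right way to make the constants depend only on shape regularity.
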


In three dimensions, the trace of an $H^1$-function on an edge is not well defined. But for a finite element function, its $L^2$-norm on an edge can be bounded by its $H^1$-norm with a factor $|\log h_{\min}|^{1/2}$. It is a generalization of Lemma \ref{lm:2dtrace-discrete} to three dimensions in the sense that controlling the norm on a co-dimension 2 boundary manifolds.

\begin{lemma}[{\cite[Lemma 2.4]{Bramble.J;Xu.J1991}}]
\label{lm:3dtrace}
	Given a polyhedral subdomain $\Omega_{i} \subset \mathbb{R}^{3}$, let $E \subset \mathbb{R}$ be any edge of  $\Omega_{i}$  and $\V(\Omega_{i})$ be a finite element space based on a shape-regular triangulation of $\Omega_{i}$. Then for all $v\in \V(\Omega_{i}),$ there holds
	$$\|v\|_{L^{2}(E)} \lesssim \left |\log \frac{H_{i}}{h_{\min}}\right |^{1/2} \left(|v|_{H^{1}(\Omega_{i})}  + H_{i}^{-1} \|v\|_{L^{2}(\Omega_{i})} \right),$$
	where $H_{i} = {\rm diam} (\Omega_{i}).$
\end{lemma}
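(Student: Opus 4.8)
\medskip
\noindent\emph{Sketch of the intended proof.}
The plan is to run the dyadic telescoping argument underlying Lemma~\ref{lm:2dtrace-discrete}, but organized around the edge $E$ instead of around a vertex. First I would reduce to a canonical configuration: by the dilation $x\mapsto x/H_i$ one may assume $H_i\eqsim 1$, and keeping track of how $\|v\|_{L^2(E)}^2$, $\|v\|_{L^2(\Omega_i)}^2$ and $|v|_{H^1(\Omega_i)}^2$ rescale (by factors $H_i$, $H_i^{3}$ and $H_i$) reproduces the weight $H_i^{-1}$ in the conclusion and replaces $h_{\min}$ by $h_{\min}/H_i$. Since $\Omega_i$ is polyhedral, a neighborhood of $E$ in $\overline{\Omega}_i$ is, after a bi-Lipschitz change of variables with constants depending only on the fixed shape of $\Omega_i$, a reference dihedral cylinder $\mathcal W=\{(r,\phi,z):0\le r<c_0,\ \phi\in I,\ 0\le z\le|E|\}$ in cylindrical coordinates about $E$, with $c_0\eqsim 1$ and $|I|\eqsim 1$. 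This map distorts the mesh, the $H^1$-seminorm and the $L^2$-norm only by fixed constants, so it suffices to bound $\|v\|_{L^2(E)}$ by the mass of $v$ on $\mathcal W$; and since $v\in\V(\Omega_i)$ is continuous, its trace $v|_E$ is well defined.

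Next comes the dyadic decomposition. For $k=0,1,\dots,K$, where $K$ is fixed by $2^{-K}c_0\eqsim h_{\min}$ (so $K\eqsim|\log h_{\min}|$; the case $h_{\min}\gtrsim c_0$ is trivial), set $D_k=\{r<2^{-k}c_0\}\cap\mathcal W$ and $A_k=D_k\setminus D_{k+1}$, and let $v_k(z)$ be the average of $v$ over the annular-sector cross-section of $A_k$ at height $z$. The triangle inequality in $L^2(E)$ gives
\[
\|v\|_{L^2(E)}\le\|v_0\|_{L^2(E)}+\sum_{k=0}^{K-1}\|v_k-v_{k+1}\|_{L^2(E)}+\|v|_E-v_K\|_{L^2(E)},
\]
and I would estimate the three groups separately. (i) By Cauchy--Schwarz, $\|v_0\|_{L^2(E)}^2\lesssim\|v\|_{L^2(A_0)}^2\le\|v\|_{L^2(\Omega_i)}^2$, since the cross-section of $A_0$ has area $\eqsim 1$. (ii) For each fixed $z$, a scale-invariant Poincar\'e inequality on the annular sector of radius $\eqsim 2^{-k}$ formed by two consecutive cross-sections gives $|v_k(z)-v_{k+1}(z)|^2\lesssim\int|\nabla v(\cdot,\cdot,z)|^2$ over that region; integrating in $z$ yields $\|v_k-v_{k+1}\|_{L^2(E)}^2\lesssim|v|_{H^1(\tilde A_k)}^2$, where $\tilde A_k$ is the tubular shell $\{r\eqsim 2^{-k}\}$ and the $\tilde A_k$ have bounded overlap. (iii) The coupling term compares $v|_E$ with the average over the finest shell $A_K$: both lie within $\mathcal O(1)$ mesh elements meeting the $h_{\min}$-neighborhood of $E$, on each of which $v$ oscillates by $\lesssim h_{\min}\|\nabla v\|_{L^\infty(\tau)}$; summing over the elements $\tau$ meeting $E$ and using the inverse estimate $\|\nabla v\|_{L^\infty(\tau)}^2\lesssim h_\tau^{-3}\|\nabla v\|_{L^2(\tau)}^2$, the factor $\lesssim h_\tau$ coming from integration along $E$, and $h_\tau\ge h_{\min}$, gives $\|v|_E-v_K\|_{L^2(E)}^2\lesssim|v|_{H^1(\Omega_i)}^2$.

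Finally, collecting (i)--(iii) and applying Cauchy--Schwarz to the $K$-term sum, $\sum_k|v|_{H^1(\tilde A_k)}\le K^{1/2}\big(\sum_k|v|_{H^1(\tilde A_k)}^2\big)^{1/2}\lesssim K^{1/2}|v|_{H^1(\Omega_i)}$, produces $\|v\|_{L^2(E)}\lesssim K^{1/2}\big(\|v\|_{L^2(\Omega_i)}+|v|_{H^1(\Omega_i)}\big)$; undoing the rescaling restores the $H_i^{-1}$ weight and replaces $K\eqsim|\log h_{\min}|$ by $|\log(H_i/h_{\min})|$, which is the assertion. The Poincar\'e steps on the intermediate shells are routine, since those shells are scaled copies of one fixed-shape annular sector. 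The main obstacle is the treatment of the two ends of the dyadic range: constructing the cylindrical bi-Lipschitz chart near a polyhedral edge with constants depending only on the domain geometry, and correctly absorbing the finest-scale coupling term $\|v|_E-v_K\|_{L^2(E)}$ via the inverse inequality and shape regularity.
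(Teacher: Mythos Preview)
The paper does not prove this lemma at all: it is quoted from \cite[Lemma~2.4]{Bramble.J;Xu.J1991} and used as a black box in the proof of Theorem~\ref{thm:wl2app}. So there is no in-paper argument to compare your sketch against; one can only judge it on its own merits (or against the original Bramble--Xu proof).

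As a stand-alone argument your dyadic telescoping around the edge is sound and is very much in the spirit of the standard proofs of such logarithmic discrete trace bounds. The rescaling, the scale-invariant Poincar\'e step on each annular shell (correct because the shells are dilates of a fixed sector, so the Poincar\'e constant and the area factor cancel), and the Cauchy--Schwarz collection of the $K$ increments into $K^{1/2}$ are all fine. Two points would need more care in a full write-up. First, the cylindrical chart $\mathcal W$ you invoke is only valid along the open edge, away from the two endpoints of $E$, which are vertices of the polyhedron $\Omega_i$; there the local geometry is conical, not dihedral. You would either cover $E$ by an interior cylindrical chart plus two vertex charts (running the same dyadic argument with spherical shells around each vertex), or argue separately that a fixed-length portion of $E$ near each vertex contributes $\lesssim\|v\|_{H^1(\Omega_i)}$. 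Second, in step~(iii) the finest shell $A_K$ has radius $\eqsim h_{\min}$, but the elements touching $E$ need not have size $h_{\min}$; your inverse-inequality bookkeeping still closes (a harmless factor $h_{\min}^2/h_\tau^2\le 1$ appears), but the phrase ``both lie within $\mathcal O(1)$ mesh elements'' should be read locally, per segment $E\cap\tau$, not globally along $E$. With those two clarifications the argument goes through.
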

In the analysis of the local quasi-interpolation in Theorem~\ref{thm:wl2app} below, we should apply Lemma~\ref{lm:2dtrace-discrete} and Lemma~\ref{lm:3dtrace} on each subdomain $\Omega_{i}$, for which the diameter $H_{i} = {\rm diam}(\Omega_{i}) \simeq 1$ is a fixed generic constant. 

\subsection{Stable Local Quasi-Interpolation}
Given a conforming triangulation $\T_{h},$ the Scott-Zhang interpolation operator $\Pi:H^1(\Omega)\to \V(\T_{h})$ can be defined as follows.  For any $p\in \N(\T_{h}),$ we choose a $(d-1)$-simplex $\sigma_{p}\ni p$ in $\T_{h}$. We remark that the choice of $\sigma_{p}$ is not unique (see Section~\ref{sec:defIa} for the particular choice of $\sigma_{p}$ for our purpose). Let $\{\lambda_{\sigma_{p},i}: i=1,\cdots, d\}$ be the barycentric coordinates of $\sigma_{p}.$ One can define the $L^2$-dual
basis $\{\theta_{\sigma_{p},i}: i=1,\cdots, d\}$ of  $\{\lambda_{\sigma_{p},i}: i=1,\cdots, d\},$ namely, $\int_{\sigma_{p}}\theta_{\sigma_{p},i} \lambda_{\sigma_{p},j}=\delta_{ij}.$
We define a quasi-interpolation $\Pi$ as
\begin{equation}
	\label{eqn:sz}
	\Pi v=\sum_{p\in \N(\T_{h})}\left(\int_{\sigma_{p}}\theta_{\sigma_{p}} v\right)\phi_p,
\end{equation}
where $\{\phi_{p}\}_{p\in \N(\T_{h})}$ is the set of nodal basis of $\V(\T_{h}),$ and $\theta_{\sigma_{p}} =\theta_{\sigma_{p}, 1}$.
%
%
The following properties of the operator $\Pi$ can be found in
\cite{Scott.R;Zhang.S1990,Oswald.P1994}.
\begin{lemma}\label{lm:int}
 The interpolation operator $\Pi$ satisfies the following properties:
\begin{enumerate}
\item {Stability: }
 	\begin{align}
	 &\|\Pi v\|_{L^2(\tau)}\lesssim \|v\|_{L^2(\omega_{\tau})}, &\nonumber\\
	 &\|\Pi v\|_{H^{1}(\tau)} \lesssim \|v\|_{H^{1}(\omega_{\tau})} ;&\label{eqn:sz-stab}
	 \end{align}
\item {Locality: } 
 	\begin{align} 
	(\Pi v)|_{\tau} = v |_{\tau}  \quad \mbox{ if }v\in \V(\omega_{\tau});& \label{eqn:sz-linear}
	\end{align}
\item {Approximability: }
 	\begin{align} 
		\|h^{-1} (v-\Pi v)\|_{L^2(\tau)} \lesssim \|v\|_{H^1(\omega_{\tau})}.&\label{eqn:sz-app}
	\end{align}
\end{enumerate}
\end{lemma}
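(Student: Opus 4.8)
The plan is to verify the three properties element by element, exploiting that $\Pi$ is locally defined and affine invariant, and to observe that they are tightly linked: the locality statement \eqref{eqn:sz-linear} is the workhorse, the approximability \eqref{eqn:sz-app} will follow from it together with the $L^2$-stability in \eqref{eqn:sz-stab} via a Bramble--Hilbert argument, so the only genuinely independent estimate to prove is the stability \eqref{eqn:sz-stab}.

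For the stability, fix an element $\tau\in\T_h$. On $\tau$ one has $\Pi v=\sum_{p\in\tau}c_p\,\phi_p$ with $c_p=\int_{\sigma_p}\theta_{\sigma_p}v$, and at most $d+1$ vertices contribute. I would bound $|c_p|\le\|\theta_{\sigma_p}\|_{L^2(\sigma_p)}\|v\|_{L^2(\sigma_p)}$, use the affine equivalence of $\sigma_p$ with a reference simplex together with the shape-regularity relation $h_p\eqsim h_\tau$ to obtain the scaling $\|\theta_{\sigma_p}\|_{L^2(\sigma_p)}\eqsim h_p^{-(d-1)/2}$, and combine this with $\|\phi_p\|_{L^2(\tau)}\eqsim h_\tau^{d/2}$ and $|\phi_p|_{H^1(\tau)}\eqsim h_\tau^{d/2-1}$. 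A scaled trace inequality controlling $\|v\|_{L^2(\sigma_p)}$ by the norm of $v$ on the patch $\omega_p$ then converts these coefficient bounds into the patch estimates \eqref{eqn:sz-stab}, after summing over the boundedly many vertices of $\tau$ (and, for the $H^1$ bound, over the boundedly many elements making up $\omega_\tau$).

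For the locality \eqref{eqn:sz-linear}, let $v\in\V(\omega_\tau)$. For each vertex $p\in\tau$ the simplex $\sigma_p$ lies in $\omega_p\subset\omega_\tau$, so $v$ is affine on $\sigma_p$ and equals $\sum_j v(p_j)\lambda_{\sigma_p,j}$ there, with $p_1=p$; the dual-basis identity $\int_{\sigma_p}\theta_{\sigma_p,1}\lambda_{\sigma_p,j}=\delta_{1j}$ then gives $c_p=v(p)$, hence $(\Pi v)(p)=v(p)$ for every $p\in\tau$. Since $\Pi v$ and $v$ are both affine on $\tau$, they coincide there; in particular $\Pi$ reproduces global linear polynomials on every element.

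Approximability \eqref{eqn:sz-app} then follows from a standard Bramble--Hilbert / Deny--Lions argument: for any linear polynomial $q$ we have $\Pi q=q$ on $\tau$, so $v-\Pi v=(v-q)-\Pi(v-q)$ on $\tau$, and the $L^2$-stability gives $h_\tau^{-1}\|v-\Pi v\|_{L^2(\tau)}\lesssim h_\tau^{-1}\|v-q\|_{L^2(\omega_\tau)}$; taking the infimum over $q$ and applying the Deny--Lions lemma on $\omega_\tau$ (rescaled to unit diameter) bounds the right-hand side by $|v|_{H^1(\omega_\tau)}$. The main obstacle I foresee is the uniformity of all the implied constants: the dual-basis scaling, the trace inequality, and the Deny--Lions constant depend a priori on the shapes of $\sigma_p$ and $\omega_\tau$, and one must use the shape-regularity of $\T_h$ — which forces these local configurations into finitely many classes up to affine maps — to make the constants independent of $\tau$ and of the mesh. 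The most delicate point is the trace estimate on the lower-dimensional simplex $\sigma_p$; it is precisely the handling of this codimension-one (and later codimension-two) piece that motivates the careful choice of $\sigma_p$ relative to the interfaces made in Section~\ref{sec:defIa}.
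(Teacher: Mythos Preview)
The paper does not prove this lemma but simply cites \cite{Scott.R;Zhang.S1990,Oswald.P1994}; your sketch is exactly the classical argument found in those references and is correct. One small point: because $\sigma_p$ is a $(d-1)$-simplex here, the scaled trace inequality you invoke unavoidably brings in the $H^1$-seminorm, so what your argument actually delivers for the first stability line is $\|\Pi v\|_{L^2(\tau)}\lesssim\|v\|_{L^2(\omega_\tau)}+h_\tau|v|_{H^1(\omega_\tau)}$ rather than a pure $L^2$-to-$L^2$ bound; this weaker form is precisely what is needed for the Bramble--Hilbert step in (iii), and the pure $L^2$ estimate as literally stated is neither obtainable with face-based $\sigma_p$ nor used anywhere downstream in the paper.
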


We apply the quasi-interpolation \eqref{eqn:sz} on each subdomain, and denote $\Pi_{i}: L^{2}(\Omega_{i}) \to \V({\Omega_{i}})$ by the Scott-Zhang interpolation restricted to $\Omega_{i}$. To be able to glue them together, we require for a vertex on the interior of the interface, we choose a common $(d-1)$ sub-simplex shared by two sub-domains. By such choice, $\Pi _i$ and $\Pi _j$ will match on the vertex interior relative to the interface.


We now define a local interpolation operator $\I_{h}^{a}$ which has the desirable local approximation and stability properties in the weighted Sobolev norms. Given a $u\in H^{1}(\Omega),$ we define $\I_{h}^{a} u \in \V(\T_{h})$ such that for $p\in \N(\Omega_{i})$
\begin{equation} 
\label{eq:loc-int}
	\I_{h}^{a} u (p) :=\left\{\begin{array}{ll}
      (\Pi_{i}u) (p),  & \text{otherwise},\\
      0,    &  \text{if } p\in \N(\partial \Gamma_{i}).
    \end{array}\right.
\end{equation}

For a vertex $p,$ let $\sigma_{p}$ be the $(d-1)$-simplex chosen to define the nodal value at $p$. Then the interpolant $\mathcal I_h^a$ is uniquely determined by the mapping $p \to \sigma _p$. In \eqref{eq:loc-int}, if $p$ is in the interior of some subdomain $\Omega_{i},$ then $\sigma_{p}\subset \Omega_{i}$ is chosen to be any $(d-1)$-simplex in $\T$ containing $p$; if $p$ is in the interior of the interface $\Gamma,$ then $\sigma_{p}\subset \Gamma$ is chosen to be a $(d-1)$-simplex on the interface containing $p$. The choice of $\sigma _p$ is not unique. However, in order to preserve the local structure of the adaptive grids, $\sigma_p$ should be chosen carefully for each vertex $p$. This will be clear in Section \ref{sec:decomp} when we discuss the  geometry of the bisection grids (see Section~\ref{sec:defIa} for details). Now we are in the position to present the main result in this section:
\begin{theorem}\label{thm:wl2app}
	Let $\Omega\subset \mathbb R^d$ with $d = 2$ or 3 and $\T_h$ be a triangulation of $\Omega$ with mesh size $h$. Then for all $u\in H^1(\Omega),$ we have 
	$$\| h^{-1}( u - \I_h^{a} u)\|_{0,a, \Omega} \lesssim |\log h_{\min}|^{1/2}\left\|u\right\|_{1,a, \Omega}.$$
\end{theorem}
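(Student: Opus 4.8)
The plan is to reduce the global weighted estimate to a sum of local estimates on each subdomain $\Omega_i$, and on each subdomain further localize to patches $\omega_p$ where the Scott--Zhang stability and approximability (Lemma~\ref{lm:int}) can be applied directly. Since $a$ is constant on each $\Omega_i$, the weighted norm splits as $\|h^{-1}(u-\I_h^a u)\|_{0,a,\Omega}^2 = \sum_{i=1}^M a_i \|h^{-1}(u-\I_h^a u)\|_{L^2(\Omega_i)}^2$, so it suffices to bound $\|h^{-1}(u-\I_h^a u)\|_{L^2(\Omega_i)}$ by $|\log h_{\min}|^{1/2}\|u\|_{H^1(\Omega_i)}$ for each $i$, and then sum the squares against the weights $a_i$. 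The key point making this robust in $J(a)$ is that, by the definition \eqref{eq:loc-int}, the nodal values of $\I_h^a u$ on $\overline{\Omega}_i$ depend only on $u|_{\Omega_i}$ (through $\Pi_i$) and are set to zero on $\N(\partial\Gamma_i)$; in particular no ratio $a_i/a_j$ ever enters.

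Next I would split the elements $\tau\subset\Omega_i$ into two groups: those whose closure does not meet $\partial\Gamma_i$ (the codimension-$2$ skeleton of the interfaces), and those that do. On the first group, $\I_h^a u|_\tau = \Pi_i u|_\tau$, so the estimate $\|h^{-1}(u-\Pi_i u)\|_{L^2(\tau)}\lesssim \|u\|_{H^1(\omega_\tau)}$ follows from \eqref{eqn:sz-app} verbatim; summing over such $\tau$ and using finite overlap of the patches gives the clean bound $\|u\|_{H^1(\Omega_i)}$ with no logarithmic factor. On the second group — the finitely many elements touching $\partial\Gamma_i$ — we write $\I_h^a u = \Pi_i u - \sum_{p\in\N(\partial\Gamma_i)\cap\tau}(\Pi_i u)(p)\,\phi_p$, so that
$$
\|h^{-1}(u-\I_h^a u)\|_{L^2(\tau)} \lesssim \|h^{-1}(u-\Pi_i u)\|_{L^2(\tau)} + \sum_{p\in\N(\partial\Gamma_i)\cap\tau} |(\Pi_i u)(p)|\,\|h^{-1}\phi_p\|_{L^2(\tau)}.
$$
The first term is again handled by \eqref{eqn:sz-app}. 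For the second term, $\|h^{-1}\phi_p\|_{L^2(\tau)} \eqsim h_\tau^{d/2-1}$, and the crucial estimate is the pointwise bound on $|(\Pi_i u)(p)|$ at a vertex $p$ lying on $\partial\Gamma_i$: since $\Pi_i u$ is a finite element function on $\Omega_i$, in two dimensions Lemma~\ref{lm:2dtrace-discrete} gives $|(\Pi_i u)(p)| \le \|\Pi_i u\|_{L^\infty(\Omega_i)} \lesssim |\log h_{\min}|^{1/2}\|\Pi_i u\|_{H^1(\Omega_i)} \lesssim |\log h_{\min}|^{1/2}\|u\|_{H^1(\Omega_i)}$ using $H_i\eqsim 1$ and the $H^1$-stability of $\Pi_i$; in three dimensions $p$ lies on an edge $E$ of some $\Omega_i$, and one controls $|(\Pi_i u)(p)|$ by an inverse estimate along $E$ via $|(\Pi_i u)(p)| \lesssim h_{\min}^{-1/2}\|\Pi_i u\|_{L^2(E)}$ followed by Lemma~\ref{lm:3dtrace}, which yields the same $|\log h_{\min}|^{1/2}$ factor (the extra $h_{\min}^{-1/2}$ is absorbed because there are only boundedly many such $\tau$ and $h_\tau^{d/2-1}\cdot h_{\min}^{-1/2}$ times the nodal coefficient is bounded when $d=3$, $h_\tau\gtrsim h_{\min}$ — this is the delicate bookkeeping point).

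The main obstacle is precisely this last point: making sure the powers of $h$ balance in the correction terms near $\partial\Gamma_i$ in three dimensions, and confirming that only a mesh-independent number of elements $\tau$ abut the codimension-$2$ skeleton so that summing the finitely many ``bad'' contributions does not accumulate into an $h$-dependent constant. Because $\partial\Gamma_i$ is a fixed union of edges of the initial triangulation $\T_0$, the number of elements of $\T_h$ meeting it is indeed controlled (in fact, along a fixed edge, it is $\mathcal O(1)$ per level times the number of levels, which is where one must be careful, but the graded structure keeps $h_\tau$ comparable for the elements sharing a given vertex of $\partial\Gamma_i$). Once the local estimates
$$
\|h^{-1}(u-\I_h^a u)\|_{L^2(\Omega_i)} \lesssim |\log h_{\min}|^{1/2}\|u\|_{H^1(\Omega_i)}
$$
are established for every $i$, squaring, multiplying by $a_i$, and summing over $i$ gives $\| h^{-1}( u - \I_h^{a} u)\|_{0,a,\Omega}^2 \lesssim |\log h_{\min}|\sum_i a_i\|u\|_{H^1(\Omega_i)}^2 = |\log h_{\min}|\,\|u\|_{1,a,\Omega}^2$, which is the claimed bound.
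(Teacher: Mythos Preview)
Your overall structure is right and matches the paper: decompose by subdomain (using that $a$ is constant on each $\Omega_i$), split $u - \I_h^a u = (u - \Pi_i u) + (\Pi_i u - \I_h^a u)$ on $\Omega_i$, bound the first piece by Scott--Zhang approximability \eqref{eqn:sz-app}, and bound the second piece --- which is a nodal correction supported near $\partial\Gamma_i$ --- using the discrete Sobolev estimates together with the $H^1$-stability of $\Pi_i$. In two dimensions your argument goes through: $\partial\Gamma_i$ is a finite set of points, so there really are only boundedly many elements $\tau$ meeting it, $\|h^{-1}\phi_p\|_{L^2(\tau)} \eqsim 1$, and Lemma~\ref{lm:2dtrace-discrete} closes the estimate as you describe.

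In three dimensions, however, your treatment of the correction term has a genuine gap. The skeleton $\partial\Gamma_i$ is one-dimensional (a union of edges of $\T_0$), so the number of elements of $\T_h$ touching it is \emph{not} bounded independently of $h$: along a fixed macro-edge $E$ there are on the order of $|E|/h$ vertices and elements. Hence your pointwise inverse estimate $|(\Pi_i u)(p)| \lesssim h_{\min}^{-1/2}\|\Pi_i u\|_{L^2(E)}$ cannot be summed over all such $p$ without an unbounded factor, and the heuristic that ``$h_\tau^{1/2}h_{\min}^{-1/2}$ is bounded because $h_\tau\gtrsim h_{\min}$'' goes the wrong way (that ratio is $\ge 1$, not $\le 1$). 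The correct bookkeeping --- and this is what the paper does in a single line --- is to aggregate \emph{before} invoking Lemma~\ref{lm:3dtrace}: compute
\[
\|h^{-1}(\Pi_i u - \I_h^a u)\|_{L^2(\Omega_i)}^2 \;\lesssim\; \sum_{p\in\N(\partial\Gamma_i)} h_p^{\,d-2}\,|(\Pi_i u)(p)|^2 \;\eqsim\; \sum_{E\subset\partial\Gamma_i}\|\Pi_i u\|_{L^2(E)}^2,
\]
the last equivalence being the standard norm equivalence for a piecewise linear function on a one-dimensional mesh. Now Lemma~\ref{lm:3dtrace} and the $H^1$-stability \eqref{eqn:sz-stab} of $\Pi_i$ give $\|\Pi_i u\|_{L^2(E)} \lesssim |\log h_{\min}|^{1/2}\|u\|_{H^1(\Omega_i)}$, and the number of macro-edges $E$ is fixed by the initial geometry. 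With this one change your argument is complete and coincides with the paper's proof.
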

\begin{proof}
Using the discrete Sobolev inequality Lemma \ref{lm:2dtrace-discrete} or \ref{lm:3dtrace} on $\partial \Gamma$ and the local $H^{1}$-stability \eqref{eqn:sz-stab} of $\Pi_{i}$, we have
\begin{eqnarray*}
	\sum_{\Gamma\subset \partial \Omega_i}
      \|\Pi_i u \|_{L^2(\partial \Gamma)}&\lesssim&  |\log
      h_{\min}|^{1/2} \left\|\Pi_i u\right\|_{H^1(\Omega_i)}
      \lesssim |\log
      h_{\min}|^{1/2} \left\|u\right\|_{H^1(\Omega_i)}. 
  \end{eqnarray*}

By the triangle inequality and the approximation property \eqref{eqn:sz-app} of $\Pi_{i}$, we have
\begin{eqnarray*}
&&\|h^{-1}(u  - \I_h^a u)\|_{L^2(\Omega_{i})}\\
 &&\le \| h^{-1}(u - \Pi _i u)\|_{L^2(\Omega_{i})}+ \|h^{-1}(\Pi _i u - \I_{h}^{a} u)\|_{L^2(\Omega_{i})}\\
&&\lesssim \|u\|_{H^{1}(\Omega_{i})} +  \sum_{\Gamma \subset \partial \Omega_i}  \|\Pi_i u \|_{L^2(\partial \Gamma)}\\
&&\lesssim \|u\|_{H^{1}(\Omega_{i})} + |\log h_{\min}|^{\frac{1}{2}} \left\|u\right\|_{H^1(\Omega_i)}.
\end{eqnarray*}

Multiplying by a suitable weight and summing
up over all subdomains on both sides, we get the desired estimate. $\Box$
\end{proof}

In general, we cannot replace $\left\|u\right\|_{1,a}$ by the
energy norm $\left|u\right|_{1,a}$ in the above lemma; see~\cite{Xu.J1991} for a counter example. To be able to use $\left|u\right|_{1,a}$ in the estimate, we introduce a subspace $\widetilde{H}^1_{D}(\Omega)$ of $H_{D}^1(\Omega)$ as follows:
$$\widetilde{H}_{D}^1(\Omega)=\left\{u\in H_{D}^1(\Omega): \int_{\Omega_i} u \, \dx=0 \quad \hbox{ for all } i\in I\right\},$$
where $I$ is the set of indices of all \emph{floating} subdomains:
$$I=\{i:\; \mbox{meas}(\partial \Omega_i\cap \Gamma_D)=0\}.$$
Let $m_{0}:=\#I$ be the cardinality of $I.$ We emphasize that $m_{0}$  is a constant, depending only on the distribution of the coefficients, and $m_0\leq M.$
In this subspace $\widetilde{H}_{D}^1(\Omega),$ the interpolation $\I_{h}^{a}$ has the following properties.
\begin{theorem}\label{thm:wl2}
  For any $v\in \widetilde{H}^1_D(\Omega)$, we have the approximation property of $\I_h^a$
  \begin{equation}\label{eq:wl2app}
    \left\|h^{-1}(v-\I_h^a v)\right\|_{0,{a}}\lesssim
    \left|\log h_{\min}\right|^{\frac{1}{2}} \left|v\right|_{1,{a}},
  \end{equation}
and the stability of $\I_h^a$ in the energy norm
  \begin{equation}\label{eq:wl2stab}
    \left|\I_h^a v\right|_{1,a}\lesssim
    \left|\log h_{\min}\right|^{\frac{1}{2}} \left|v\right|_{1,{a}}.
  \end{equation}
\end{theorem}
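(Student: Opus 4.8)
The plan is to reduce both inequalities to Theorem~\ref{thm:wl2app} and to the local stability and discrete trace tools already assembled in this section; the only genuinely new ingredient needed is a \emph{weighted Poincar\'e--Friedrichs inequality} that holds precisely on the subspace $\widetilde H^1_D(\Omega)$. So the first step I would carry out is to show that
$$\|v\|_{0,a}\lesssim |v|_{1,a},\qquad\text{hence}\qquad \|v\|_{1,a}\eqsim|v|_{1,a}\quad\text{for all }v\in\widetilde H^1_D(\Omega).$$
This is proved subdomain by subdomain: if $i\notin I$, then $\mathrm{meas}(\partial\Omega_i\cap\Gamma_D)>0$ and $v=0$ there, so Friedrichs' inequality gives $\|v\|_{L^2(\Omega_i)}\lesssim \mathrm{diam}(\Omega_i)\,|v|_{H^1(\Omega_i)}$; if $i\in I$, then $\int_{\Omega_i}v\,\dx=0$ and the Poincar\'e inequality gives the same bound. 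Since $\mathrm{diam}(\Omega_i)\eqsim 1$ and $M$ is fixed, multiplying by $a_i$ and summing over $i$ yields $\|v\|_{0,a}^2\lesssim \sum_i a_i|v|_{H^1(\Omega_i)}^2=|v|_{1,a}^2$; the weights factor through cleanly because $a$ is constant on each $\Omega_i$.

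Given this, \eqref{eq:wl2app} is immediate: as $\widetilde H^1_D(\Omega)\subset H^1(\Omega)$, Theorem~\ref{thm:wl2app} applies and gives $\|h^{-1}(v-\I_h^a v)\|_{0,a}\lesssim |\log h_{\min}|^{1/2}\|v\|_{1,a}$, and Step~1 replaces $\|v\|_{1,a}$ by $|v|_{1,a}$.

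For the energy stability \eqref{eq:wl2stab}, I would work on each $\Omega_i$ and split $\I_h^a v=\Pi_i v-(\Pi_i v-\I_h^a v)$. The first term satisfies $|\Pi_i v|_{H^1(\Omega_i)}\lesssim\|v\|_{H^1(\Omega_i)}$ by the $H^1$-stability \eqref{eqn:sz-stab} of $\Pi_i$. The correction $\Pi_i v-\I_h^a v=\sum_{p\in\N(\partial\Gamma_i)}(\Pi_i v)(p)\,\phi_p$ is a finite element function whose nodal values are supported only on the codimension-two skeleton $\partial\Gamma$; using the bounded overlap of the supports of the $\phi_p$ together with the inverse inequality $|\phi_p|_{H^1(\Omega_i)}^2\lesssim h_p^{d-2}$, I obtain $|\Pi_i v-\I_h^a v|_{H^1(\Omega_i)}^2\lesssim\sum_{p\in\N(\partial\Gamma_i)}|(\Pi_i v)(p)|^2 h_p^{d-2}$. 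When $d=2$ the set $\N(\partial\Gamma_i)$ is a fixed, mesh-independent finite set of points, so the sum is $\lesssim\|\Pi_i v\|_{L^\infty(\Omega_i)}^2\lesssim |\log h_{\min}|\,\|\Pi_i v\|_{H^1(\Omega_i)}^2$ by Lemma~\ref{lm:2dtrace-discrete} with $H_i\eqsim 1$; when $d=3$ one has the one-dimensional finite element norm equivalence $\sum_{p\in\N(E)}|(\Pi_i v)(p)|^2 h_p\eqsim\|\Pi_i v\|_{L^2(E)}^2$ over the interface-bounding edges $E$, which is $\lesssim |\log h_{\min}|\,\|\Pi_i v\|_{H^1(\Omega_i)}^2$ by Lemma~\ref{lm:3dtrace}. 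In either case \eqref{eqn:sz-stab} converts this into $\lesssim|\log h_{\min}|\,\|v\|_{H^1(\Omega_i)}^2$. Combining the two contributions, multiplying by $a_i$, summing over $i$, and finally invoking Step~1 gives $|\I_h^a v|_{1,a}\lesssim|\log h_{\min}|^{1/2}\|v\|_{1,a}\lesssim|\log h_{\min}|^{1/2}|v|_{1,a}$.

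The step I expect to be the main obstacle is the treatment of the correction term $\Pi_i v-\I_h^a v$ in Step~3: one must scale the inverse inequality correctly on the thin layer of elements meeting the codimension-two skeleton $\partial\Gamma$ and match it to the discrete trace estimates of Lemmas~\ref{lm:2dtrace-discrete} and~\ref{lm:3dtrace}. In two dimensions this hinges only on the fact that the number of endpoint vertices of the interfaces is a constant fixed by the initial partition $\T_0$; in three dimensions it requires the norm equivalence on each interface-bounding edge quoted above, after which Lemma~\ref{lm:3dtrace} is applied on $\Omega_i$ (with $H_i\eqsim 1$) to produce the $|\log h_{\min}|^{1/2}$ factor. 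Everything else is a routine combination of triangle inequalities, local Poincar\'e/Friedrichs estimates, and the Scott--Zhang properties of Lemma~\ref{lm:int}.
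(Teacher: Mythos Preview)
Your proof of \eqref{eq:wl2app} is exactly the paper's: apply Theorem~\ref{thm:wl2app} and then use the subdomain-wise Poincar\'e/Friedrichs inequality (valid precisely on $\widetilde H^1_D(\Omega)$) to trade $\|v\|_{1,a}$ for $|v|_{1,a}$.

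For \eqref{eq:wl2stab} your argument is correct but takes a genuinely different route. You estimate $|\I_h^a v|_{1,a}$ directly by splitting $\I_h^a v=\Pi_i v-(\Pi_i v-\I_h^a v)$ on each $\Omega_i$, controlling the first piece by the $H^1$-stability of $\Pi_i$ and the correction by the discrete trace Lemmas~\ref{lm:2dtrace-discrete}--\ref{lm:3dtrace} together with the scaling $|\phi_p|_{H^1}^2\lesssim h_p^{\,d-2}$. The paper instead bootstraps \eqref{eq:wl2stab} from the already-proved \eqref{eq:wl2app}: on each element $\tau$ it inserts the local mean $Q_\tau v\in\mathbb P_0(\tau)$, uses $|\I_h^a v|_{H^1(\tau)}=|\I_h^a v-Q_\tau v|_{H^1(\tau)}$, applies an inverse inequality, and splits $\I_h^a v-Q_\tau v=(\I_h^a v-v)+(v-Q_\tau v)$; summing with weights gives $|\I_h^a v|_{1,a}^2\lesssim\|h^{-1}(v-\I_h^a v)\|_{0,a}^2+|v|_{1,a}^2$, and \eqref{eq:wl2app} closes the estimate. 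The paper's approach is shorter and dimension-blind, never touching the codimension-two skeleton a second time; yours is more hands-on but perfectly sound, and makes explicit where the logarithmic loss enters at the energy level.
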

\begin{proof}
   For $v\in \widetilde{H}^1_D(\Omega)$, it satisfies the Poincar\'e-Friedrichs inequality on each subdomain $\Omega_i$. Therefore we get $\left\|v\right\|_{0,a}\lesssim  \left|v\right|_{1,a}.$ The inequality
  \eqref{eq:wl2app} then follows from Lemma \ref{thm:wl2app}.

  To prove inequality \eqref{eq:wl2stab}, we use the inequality \eqref{eq:wl2app} and the local $L^2$ projection $Q_\tau: L^2(\tau) \to \P_0(\tau)$ defined by $Q_\tau u |_{\tau}= |\tau|^{-1}\int _{\tau} u \dx.$ Then on each element $\tau\in {\T}_h,$ we have
  \begin{eqnarray*}
    \left|\I_h^a v\right|_{H^1(\tau)}^2&\lesssim& \left|\I_h^a v-Q_\tau v\right|_{H^1(\tau)}^2\lesssim h_{\tau}^{-2} \left\|\I_h^a v-Q_\tau v\right\|_{L^2(\tau)}^2\\
    &\lesssim& h_{\tau}^{-2} \left (\left\|v-\I_h^a v\right\|_{L^2(\tau)}^2+\left\|v-Q_\tau v\right\|_{L^2(\tau)}^2 \right )\\
    &\lesssim& h_{\tau}^{-2} \left\|v-\I_h^a v\right\|_{L^2(\tau)}^2+\left|v\right|_{H^1(\tau)}^2
  \end{eqnarray*}
  where in the last inequality, we used the approximation properties of $Q_\tau$. Multiplying by a suitable weight and summing
  up over all $\tau\in {\T}$ on both sides, we get
   \begin{eqnarray*}
    \left|\I_h^a v\right|_{1,a}^2
    \lesssim  \left\| h^{-1}(v-\I_h^a
    v)\right\|_{0,a}^2+\left|v\right|_{1,a}^2
    \lesssim \left|\log h_{\min}\right|\left|v\right|_{1,a}^2
  \end{eqnarray*}
  where in the last step, we used inequality
  \eqref{eq:wl2app}. $\Box$
\end{proof}

\begin{remark}
When the coefficients satisfy the quasi-monotone assumption, the factor $|\log h_{\min}|$ can be removed by arguments on a modified local patch; see~\cite{Dryja.M;Sarkis.M;Widlund.O1996,Petzoldt.M2002}.
$\Box$\end{remark}

\section{Bisection Grids and Space Decomposition}
\label{sec:decomp}
In this section, we give a short overview of the framework in
the multilevel space decomposition on bisection grids in the recent
work~\cite{Chen.L;Nochetto.R;Xu.J2007,Xu.J;Chen.L;Nochetto.R2009}. Most of the material in
this section can be found there.

\subsection{Bisection Methods} We recall briefly the bisection algorithm for the mesh refinements. Detailed discussions can be found in
\cite{Binev.P;Dahmen.W;Devore.R2004,Chen.L2006a,Mitchell.W1992} and
the references cited therein.

 Given a conforming triangulation
$\T$ of $\Omega,$ for each element $\tau\in \T,$ we assign an edge of $\tau$ to be the \emph{refinement
edge} of $\tau$, denoted by $e(\tau)$ or simply $e$ without ambiguity. This procedure is called \emph{labeling}. Given a set of elements marked for refinement, the refinement procedure consists
two steps:
\begin{enumerate}
  \item[(1)] bisect the marked element into two elements by
  connecting the middle point of the refinement edge to the vertices not contained in the refinement edge;
  \item[(2)] assign refinement edges for two new elements.
\end{enumerate}

Given a labeled initial grid $\T_0$ of
$\Omega$ and a bisection method, we define
\begin{align*}
  \mathbb F(\T_0) &= \{\T : \T \hbox{ is refined from } \T_0 \hbox{ by bisection method }\},\\
\mathbb T(\T_0) & = \{\T\in \mathbb F(\T_0): \T \text{ is conforming} \}.
\end{align*}
Namely $\mathbb F(\T_0)$ contains all triangulations obtained
from $\T_0$ using the chosen bisection method. But a triangulation
$\T\in \mathbb F(\T_0)$ could be non-conforming and thus we
define $\mathbb T(\T_0)$ as a subset of $\mathbb F(\T_0)$
containing only conforming triangulations. 

Given any triangulation $\T$, we define $\overline{\T}_{0} = \T$, and the $k$th uniform refinement $\overline{\T}_{k}\;\; (k\ge 1)$ being the triangulation obtained by bisecting all element in $\overline{\T}_{k-1}$ only once. Note that for a conforming initial triangulation $\T_0$ with arbitrary labeling, $\overline{\T}_k\in \mathbb F(\T_0)$ but not necessarily in the set $\mathbb T(\T_0)$ in general. Throughout this paper, we shall consider bisection methods which satisfy the
following two assumptions:

\medskip

\noindent\textbf{(B1) Shape Regularity:}
 $\mathbb F(\T_0)$ is shape regular.

\smallskip

\noindent \textbf{(B2) Conformity of Uniform Refinement:}
$\overline{\mathcal T}_k(\T_0)\in \mathbb T(\T_0)$ for all $k\geq 0$.

\medskip

In two dimensions, newest vertex
bisection with compatible initial labeling~\cite{Mitchell.W1989} satisfies (B1) and (B2). In three and higher dimensions, the bisection method by Kossaczk\'y~\cite{Kossaczky.I1994} and Stevenson~\cite{Stevenson.R2008} will satisfy (B1) and (B2). 
We note that to satisfy assumption (B2), the initial triangulation is modified by further refinement of each element, which deteriorates the shape regularity. 
Although (B2) imposes a severe restriction on the initial labeling, it is crucial to control the number of elements added in the completion which is indispensable to establish the optimal complexity of adaptive finite element methods~\cite{Nochetto.R;Siebert.K;Veeser.A2009}. 

\subsection{Compatible Bisections}
For a vertex $p\in \N(\T)$ or an edge $e\in \E(\T)$, we define the {\em first
ring} of $p$ or $e$ to be
$$
 \R _p  = \{ \tau  \in \T \,|\, p\in  \tau \},
 \quad \R_e = \{ \tau  \in \T \,|\, e\subset  \tau \},
$$
and the local patch of $p$ or $e$ as $\omega _p = \cup _{ \tau  \in \R_p}  \tau, $ and $\omega _e = \cup _{\tau  \in \R_e}  \tau .$
Note that $\omega _p$ and $\omega _e$ are subsets of $\Omega$, while
$\R_p$ and $\R_e$ are subsets of $\T$ which can be
thought of as triangulations of $\omega _p$ and $\omega _e$,
respectively. The cardinality of a set $S$ will be denoted by $\# S$.

Given a labeled triangulation $\T$, an edge $e\in
\E(\T)$ is called a \textit{compatible edge} if $e$ is the
refinement edge of $\tau$ for all $\tau\in \R_e$. For a
compatible edge, the ring $\R_e$ is called a \textit{compatible
  ring}, and the patch $\omega _e$ is called a \textit{compatible patch}. Let
$p$ be the midpoint of $e$ and $\R_p$ be the ring of $p$ in the refined triangulation. A \textit{compatible
  bisection} is a mapping $b_e: \R_e \to \R_p.$
We then define the addition 
$$
\T + b_e := \T \backslash \R_e \cup \R_p.
$$
For a compatible bisection sequence $\B:=(b_{1}, \cdots, b_{k})$, the addition $\T + \B$ is defined as 
$$\T + \B = ((\T + b_{1}) + b_2) + \cdots + b_k,$$
whenever the addition is well defined. Note that if $\T$ is conforming, then $\T + b_e$ is conforming
for a compatible bisection $b_e$, whence compatible bisections
preserve the conformity of triangulations.

We now present a decomposition of meshes in  $\mathbb
T(\T_0)$ using compatible bisections, which will be instrumental later. We only give a pictorial demonstration in Fig.~\ref{Fig:decomp} to illustrate the decomposition. For the proof, we refer to~\cite{Xu.J;Chen.L;Nochetto.R2009}. 

\begin{theorem}[Decomposition of Bisection Grids]\label{th:decomposition} 
  Let $\T_0$ be a conforming triangulation. 
  Suppose the bisection method satisfies
  assumptions (B2), i.e., for all $k\geq 0$ all uniform refinements
  $\overline{\T}_k$ of $\T_0$ are conforming. Then for any
  $\T \in \mathbb T(\T_0)$, there exists a compatible
  bisection sequence $\B=(b_1, \cdots, b_N)$ with
  $N=\#\mcal N(\T)-\#\mcal N(\T_0)$ such that
\begin{equation}\label{eq:Tdec} 
\T = \T_0 + \B.
\end{equation}
\end{theorem}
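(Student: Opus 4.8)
The plan is to construct the compatible bisection sequence $\B$ by a \emph{reverse} process: starting from the fine mesh $\T$, I will repeatedly identify a compatible edge whose removal (coarsening) yields a smaller mesh still in $\mbb T(\T_0)$, and peel off compatible bisections one at a time until we are left with $\T_0$. Reversing this list of coarsenings then gives the desired refinement sequence $\B=(b_1,\dots,b_N)$ with $\T = \T_0 + \B$. The length $N=\#\mcal N(\T)-\#\mcal N(\T_0)$ is forced, since each compatible bisection $b_e$ adds exactly one new vertex (the midpoint of $e$), so $N$ is simply the number of vertices created along the way.

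First I would set up the coarsening step. Given $\T\in\mbb T(\T_0)$ with $\T\neq\T_0$, consider the \emph{generation} (or level) function on elements: $g(\tau)$ is the number of bisections needed to create $\tau$ from its ancestor in $\T_0$. Let $g_{\max}$ be the maximal generation attained in $\T$, and let $\tau$ be any element with $g(\tau)=g_{\max}$. Because the bisection method is binary and deterministic, $\tau$ has a unique sibling $\tau'$ obtained from the same parent bisection; the key claim is that the shared edge $e$ of $\tau$ and $\tau'$ is a \emph{compatible edge} of $\T$, i.e., $e=e(\sigma)$ for every $\sigma\in\R_e$. This is where the hypothesis (B2) and the structure of newest-vertex bisection (resp. the Kossaczk\'y--Stevenson scheme) enter: since $\overline{\T}_k\in\mbb T(\T_0)$ for all $k$, the midpoint $p$ of $e$ that appears in $\T$ must be a vertex that the uniform refinement $\overline{\T}_{g_{\max}}$ would also create, and the maximality of $g_{\max}$ forces every element in $\R_e$ to have $e$ as its refinement edge. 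Once $e$ is compatible, the coarsening $\T - b_e := \T\setminus\R_p\cup\R_e$ is well defined, conforming, and lies in $\mbb T(\T_0)$ (it is obtained from $\T_0$ by the remaining bisections).

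Then I would run the induction. By the claim, every $\T\in\mbb T(\T_0)\setminus\{\T_0\}$ admits at least one compatible coarsening, reducing $\#\mcal N$ by one. Iterating, after $N=\#\mcal N(\T)-\#\mcal N(\T_0)$ coarsenings we reach a conforming mesh in $\mbb T(\T_0)$ with the same vertex set as $\T_0$; since a conforming triangulation refined from $\T_0$ with no new vertices must equal $\T_0$, the process terminates exactly at $\T_0$. Listing the coarsenings in the order performed and reversing yields compatible bisections $b_1,\dots,b_N$; each $\T_0+b_1+\cdots+b_j$ is conforming (compatible bisections preserve conformity, as noted in the excerpt) and the telescoping gives \eqref{eq:Tdec}.

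The main obstacle is the compatibility claim in the coarsening step — proving that an edge opposite a maximal-generation element is the common refinement edge of its whole ring. This is a genuinely scheme-dependent combinatorial fact: it relies on the compatible initial labeling in 2D and on the tagged-tetrahedron machinery of Kossaczk\'y/Stevenson in 3D, together with (B2) to rule out hanging-node configurations at the finest level. Everything else — the vertex count, termination, and preservation of conformity — is bookkeeping. Since the excerpt explicitly defers the proof to \cite{Xu.J;Chen.L;Nochetto.R2009}, I would at this point cite that reference for the compatibility lemma and present the induction above as the packaging. $\Box$
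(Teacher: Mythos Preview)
The paper itself gives no proof of this theorem: it explicitly defers to~\cite{Xu.J;Chen.L;Nochetto.R2009} and offers only the picture in Fig.~\ref{Fig:decomp}. Your coarsening strategy---peel off one compatible bisection at a time by locating a vertex of maximal generation, iterate until $\T_0$ is reached, then reverse---is exactly the argument carried out in that reference, so at the level of approach you are aligned with what the paper intends.

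One genuine slip to fix: the edge you call ``the shared edge $e$ of $\tau$ and $\tau'$'' is \emph{not} the compatible edge. The face shared by two siblings is the bisector (the median in 2D), whereas the compatible edge is the \emph{refinement edge of the parent}, whose midpoint is the newest vertex $p$; the siblings each contain only half of that edge. What you actually need is the statement that for a vertex $p$ of maximal generation, every element in the ring $\R_p$ has $p$ as its newest vertex, so the elements of $\R_p$ pair up into parents all sharing the same refinement edge $e$ through $p$; that $e$ is then compatible in the coarsened mesh. This is the ``good-for-coarsening'' node characterization in~\cite{Chen.L;Zhang.C2007,Xu.J;Chen.L;Nochetto.R2009}, and it is where (B2) is genuinely used. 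Once you correct the identification of $e$, the induction and the vertex-count bookkeeping you describe go through as written.
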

\begin{figure}[h!!]
\begin{center}
\includegraphics*[width=0.88\linewidth]{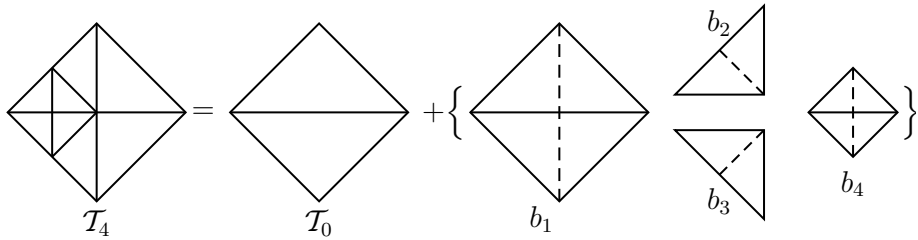}
\caption{A decomposition of a bisection grid.}
\label{Fig:decomp}
\end{center}
\end{figure}

We point out that in practice it is not necessary to store
$\B$ explicitly during the refinement procedure. Instead we can apply coarsening algorithms to find the decomposition. We refer to~\cite{Chen.L;Zhang.C2007} (see also~\cite{Chen.L2008}) for a vertex-oriented coarsening algorithm and the application to multilevel preconditioners and multigrid methods.

For a compatible bisection $b_i\in \B$, we use the same subscript $i$ to denote related quantities such as:

\smallskip
\parbox{5.5cm}{
\begin{itemize}
\item $e_i$: the refinement edge;
\item $p_i$: the midpoint of $e_i$;
\item $\widetilde{\omega} _i= \omega _{p_i}\cup \omega _{p_{l_i}}\cup \omega _{p_{r_i}}$;
\item $\T_i=\T_0+ (b_1, \cdots, b_i)$;
\end{itemize}}
\parbox{6cm}{
  \begin{itemize}
\item $\omega _i$: the patch of $p_i$ i.e. $\omega _{p_i}$;
\item $p_{l_i}, p_{r_i}$: two end points of $e_i$;
\item $h_i$: the diameter of $\omega _i$;
\item $\R_i$: the first ring of $p_i$ in $\T_i$.
  \end{itemize}
}
\subsection{Generation of Compatible Bisections}
The generation of each element in the initial grid $\T_0$ is defined to be $0$, and the generation of a child is 1 plus that of the father. The generation of an element $\tau \in \T\in \mathbb F(\T_0)$ is denoted by
$g_{\tau}$ and coincides with the number of bisections needed to
create $\tau$ from $\T_0$. For
any vertex $p\in \mathcal {N}(\T_0)$, the generation of $p$ is
defined as the minimal integer $k$ such that $p\in\mathcal N(\overline{\T}_k)$ and is denoted by $g_p$. 
In \cite{Xu.J;Chen.L;Nochetto.R2009}, we show that if $b_i\in \B$ is a compatible bisection, then all elements of $\R_i$ have the same generation $g_i$. Therefore we can introduce the concept of generation of compatible bisections. For a compatible bisection $b_i: \R_{e_i}\to \R_{p_i}$, we define $g_i=g(\tau), \tau \in \R_{p_i}$. 

Throughout this paper we always assume $h(\tau)\eqsim 1$ for $\tau \in \T_0$. Then since a bisection of a simplex will reduce the volume by half, we have the following important relation between generation and mesh size
\begin{equation*}
h_i \eqsim \gamma ^{\,g_i}, \; \text{ with }\, \gamma = \Big (\frac{1}{2}\Big )^{1/d}\in (0,1).
\end{equation*}
In particular, we introduce a ``level'' (or generation) constant $L:= \max_{\tau\in \T} g_{\tau}.$ It is obvious that $L\eqsim \lceil |\log h_{\min}|\rceil.$

Different bisections with the same generation have disjoint 
local patches. Namely for two compatible bisections $b_i$ and $b_j$ with $g_j=g_i$, we then have $\omega _i \cap \omega _j = \varnothing.$
A simple but important consequence is that, for all $u\in L^2(\Omega)$ and $k\geq 0$, 
\begin{align}
\label{eq:sumtilde} \sum _{g_i = k} \| u \|^2_{0,a,\widetilde{\omega}_i} 
\lesssim \|u\|_{0,a, \Omega}^2.
\end{align}

\subsection{A Local Quasi-Interpolation} 
\label{sec:defIa}

We define a sequence of quasi-interpolation operators recursively. Let $\I^{a}_{0}: \V(\T_{N}) \to \V_{0}$ be an arbitrary interpolation operator defined by \eqref{eq:loc-int}. Assume $\I^{a}_{i-1}: \V(\T_{N})\to \V(\T_{i-1})$ is defined. Let $b_{i}$ be a compatible bisection, which introduces a new vertex $p_{i}$ from $\T_{i-1}$ to $\T_{i} = \T_{i-1} + b_{i}.$ We construct $\I^{a}_{i}: \V(\T_{N}) \to \V(\T_{i})$ as follows.
If the new vertex $p_{i} \in \Gamma_{D}$, we simply define $(\I^{a}_{i} v)(p_{i}) = 0$ to reflect the vanishing boundary condition of $v.$ Otherwise, if $p_{i}\notin \Gamma_D$ we define the nodal value at $p_{i}$ through \eqref{eqn:sz} with the choice of $\sigma_{p_{i}}$ as follows:
\begin{enumerate}
	\item if $p_{i}$ is in the interior of some subdomain $\Omega_{i},$ we choose a $(d-1)$-simplex $\sigma_{p_{i}}$ containing $p_{i}$;
	\item if $p_{i}$ is in the interior of some interface $\Gamma,$ we choose a $(d-1)$-simplex $\sigma_{p_{i}}\subset \Gamma$ containing $p_{i}$;
	\item otherwise, we simply let $\sigma_{p_{i}} = \emptyset$ and define  $(\I^{a}_{i}v)(p_{i}) = 0.$
\end{enumerate} 
For other vertices $p\in \N(\T_{i-1}),$ let $\sigma_p\in \T_{i-1}$ be the simplex
used to define $(\I^{a}_{i-1}v)(p),$ we update $(\I^{a}_{i}v)(p)$
according to the following two cases:
\begin{enumerate}
\item if $\sigma _p\subset \overline{\omega _p(\T_i)}$ we keep the nodal
  value, i.e., $(\I^{a}_{i}v)(p)=(\I^{a}_{i-1}v)(p)$;
\item otherwise we update $\sigma_{p}$ as $\sigma_p \leftarrow \overline{\omega _p(\T_i)}\cap \sigma_{p}$ to define $(\I^{a}_{i}v)(p).$
\end{enumerate}
In either case, we ensure that the simplex $\sigma_p \subset \overline{\omega _p(\T_i)}.$ In this way, we obtain a sequence of quasi-interpolation
operators 
$$
\I^{a}_{i}: \V(\T_N)\to \V(\T_i),\quad
i=0\cdots N.
$$
Note that in general $\I^{a}_N v\neq v$ since the simplex used to define nodal values of $\I^{a}_N v$ may not be in the finest mesh $\T_N$ but in $\T_{N-1}.$ 
Figure \ref{fig:updatepatch} illustrates the choice of $\sigma_{p}$ in different cases in 2D.
\begin{figure}[hdpt]
\subfigure[Simplex to define $(\I^{a}_i u)(p_i)$]{
\label{fig:update1}
\begin{minipage}[t]{0.48\linewidth}
\centering
\includegraphics*[width=5.5cm]{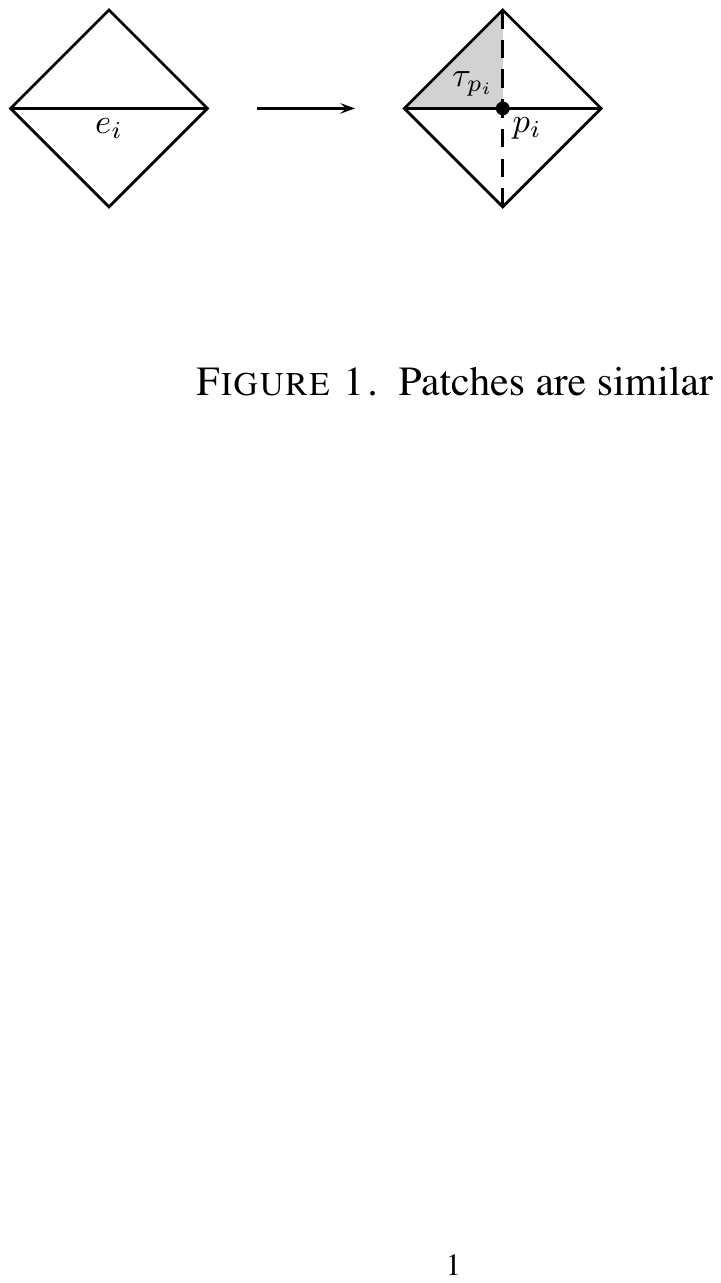}
\end{minipage}}
\subfigure[Simplex to define $(\I^{a}_i u)(p_{l_i})$]{
\label{fig:update2}
\begin{minipage}[t]{0.48\linewidth}
\centering
\includegraphics*[width=5.5cm]{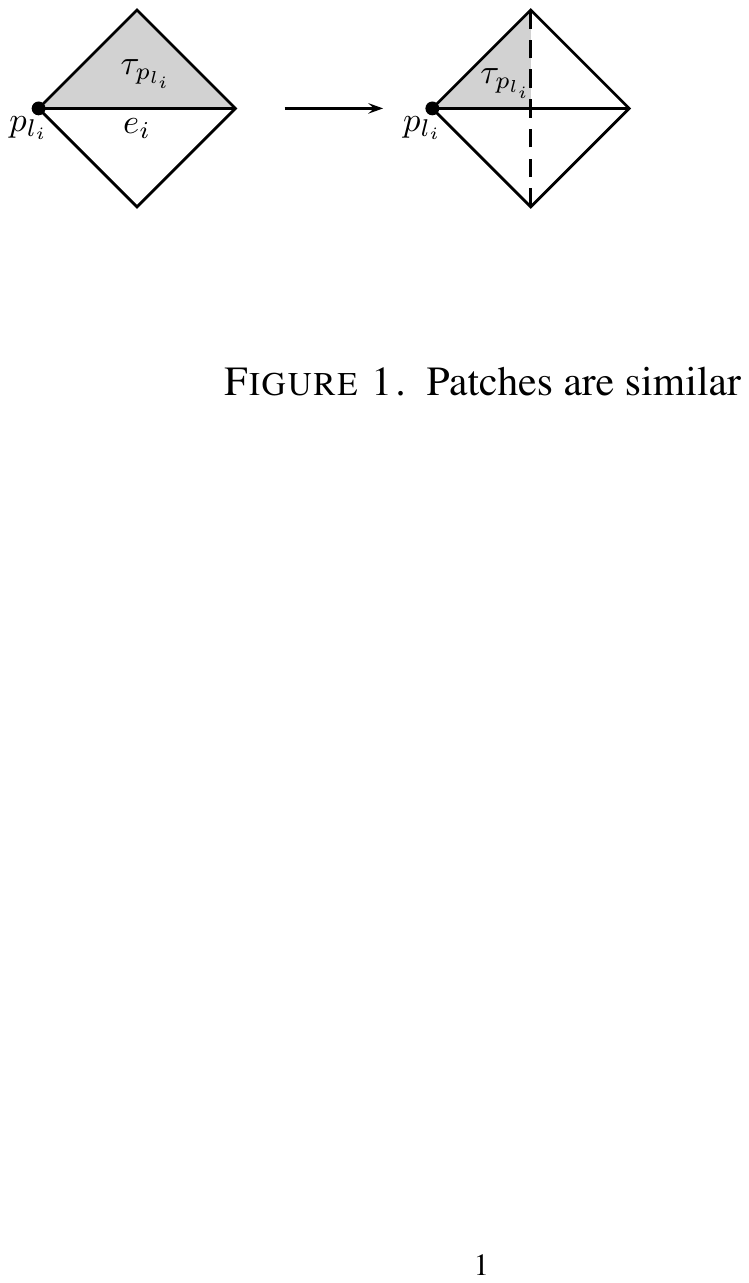}
\end{minipage}}
\subfigure[Simplex to define $(\I^{a}_i u)(p_{r_i})$]{
\label{fig:update3}
\begin{minipage}[t]{0.48\linewidth}
\centering
\includegraphics*[width=5.5cm]{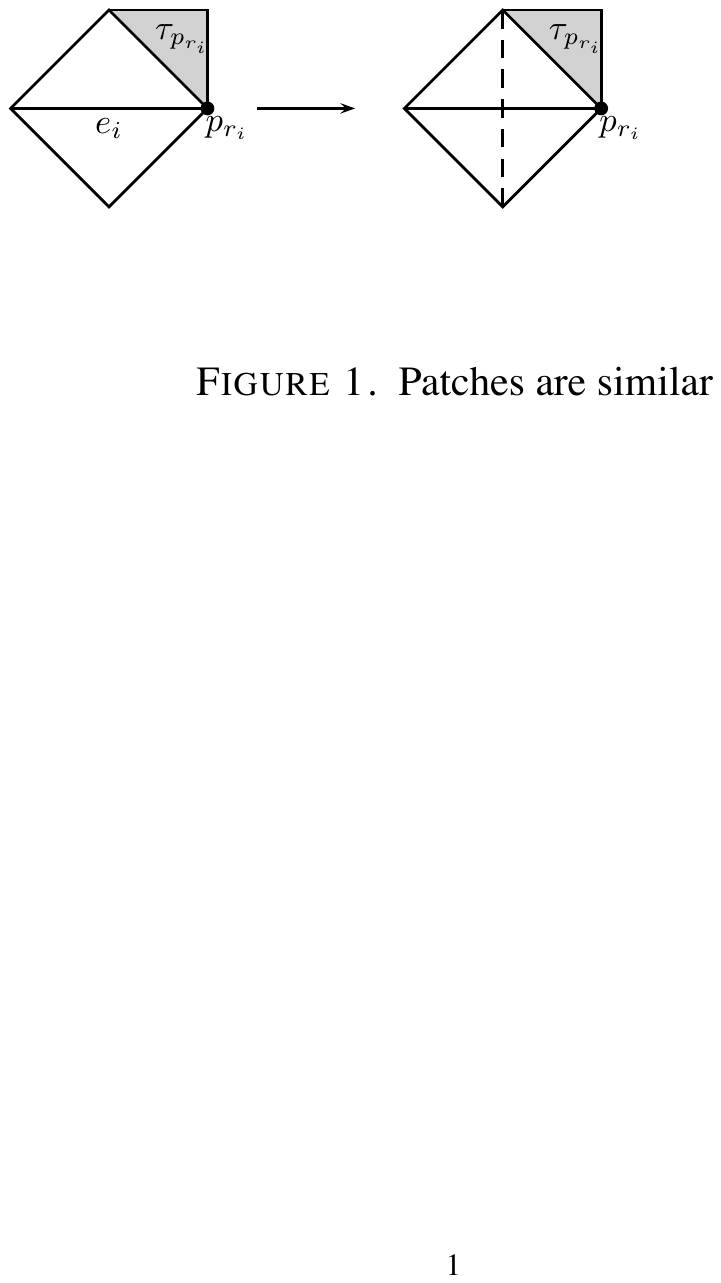}
\end{minipage}}
\subfigure[Simplex to define $(\I^{a}_i u)(p)$]{
\label{fig:update4}
\begin{minipage}[t]{0.48\linewidth}
\centering
\includegraphics*[width=5.5cm]{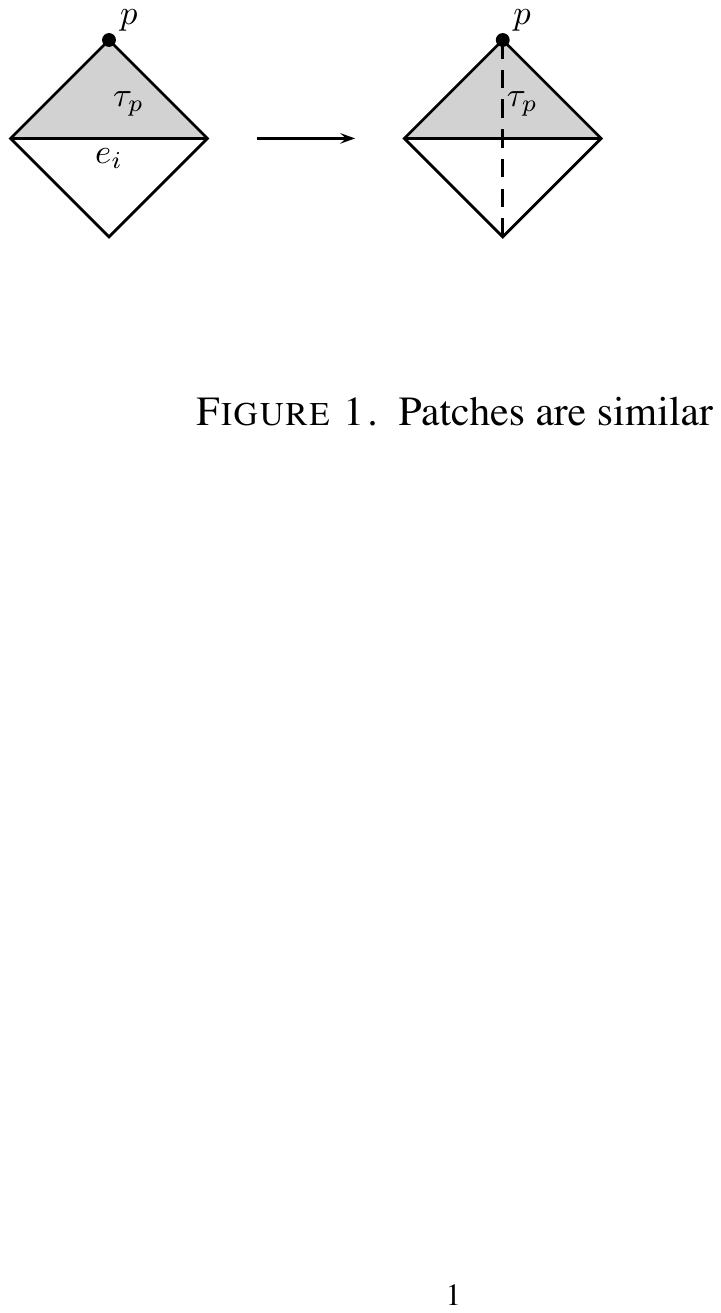}
\end{minipage}}
\caption{Update of nodal values $\I^{a}_iu$ to yield $\I^{a}_{i-1}u$:
the element $\tau$ chosen to perform the averaging that gives
$(\I^{a}_iu)(p)$ must belong to $\omega_p(\T_i)$. This implies
$(\I^{a}_i-\I^{a}_{i-1})u(p)\ne0$ possibly for $p=p_i,p_{l_i},p_{r_i}$
and $=0$ otherwise.}
\label{fig:updatepatch}
\end{figure}

\subsection{Stable Space Decomposition} 
Let $\phi_{i,p}\in \V(\T_i)$ denote the nodal basis at
node $p\in \N(\T_i).$ Motivated by the stable three-point wavelet construction by Stevenson~\cite{Stevenson.R1998}, we define the subspaces
$\V_0=\V(\T_0),$ and
$$\V_i={\rm span}\{\phi_{i,p_{i}}, \phi_{i, p_{l_{i}}}, \phi_{i,p_{r_{i}}}\}.$$ 
Let $\{\phi_{p} : p\in \Lambda\}$ be a basis of $\V(\T_{N}),$ where $\Lambda$ is the index set of the basis functions, and let $\mathcal V_p$ be the 1-dimensional subspace spanned by the nodal bases  associated to $p$ in the finest grid. We choose the following space decomposition:
\begin{equation}\label{eq:V}
\V :=\sum_{p\in \Lambda} \V_{p} + \sum_{i=0}^{N} \V_{i}.
\end{equation}

Recall that $b_i$ only changes the local patches of two end points of the refinement edge $e_i$ going from $\T_{i-1}$ to $\T_{i}.$ By construction $(\I^{a}_{i}-\I^{a}_{i-1})v(p)=0$ for $p\in \N(\T_i), p\neq p_i, p_{l_i}$ or $p_{r_i}$, which implies $v_i:=(\I^{a}_{i}-\I^{a}_{i-1})v\in \V_i.$ Although $\I^{a}_N v\neq v$ in general, the difference $v-\I^{a}_{N}v$ is of high frequency in the finest mesh. Let us write $v-\I^{a}_{N}v =\sum _{p\in \Lambda} v_p$ as the basis decomposition.  We then obtain a decomposition 
\begin{equation}
  \label{eq:proofdec}
  v= \sum _{p\in \Lambda}v_p + \sum _{i=0}^Nv_i, \quad v_i \in \mathcal V_i,
\end{equation}
where  for convenience we define $\I^{a}_{-1}:=0.$ Moreover, we introduce a subspace $\widetilde{\V}:= \V \cap \widetilde{H}^{1}_{D}(\Omega).$
 Then we have the following stable decomposition. 
 \begin{theorem}[Stable Decomposition]\label{th:stda}
Given a triangulation $\T_N = \T_0 + \B$ in $\mbb T(\T_0)$, let $L=\max_{\tau\in \T_N}g(\tau)$. 
  \begin{enumerate}
    \item For any $v\in \V,$ there exist $v_{p} \in \V_{p}\;(p\in \Lambda)$ and $v_i\in \V_i \; (i=1, \cdots,
    N)$ such that $v=\sum_{p\in \Lambda} v_{p} + \sum_{i=0}^N v_i$ and
    \begin{equation}\label{eq:stdav}
      \sum_{p\in \Lambda} h_{p}^{-2} \|v_{p}\|_{0,a}^{2} + \|v_0\|_{1,a}^2+\sum_{i=1}^N h_i^{-2}\|v_i\|_{0,a}^2
      \lesssim c_d(L) |v|_{1,a}^2,
    \end{equation}
    where $ c_d(L)= \left\{\begin{array}{ll}
    L^2, &d=2\\
    2^L, &d=3
  \end{array}\right..$
    \item For any $v\in \widetilde{\V},$ there exist $v_{p} \in \V_{p}\;(p\in \Lambda)$ and $v_i\in \V_i \; (i=1, \cdots,
    N)$ such that $v=\sum_{p\in \Lambda} v_{p} + \sum_{i=0}^N v_i$ and
    \begin{equation}\label{eq:stdatv}
      \sum_{p\in \Lambda} h_{p}^{-2} \|v_{p}\|_{0,a}^{2} + \|v_0\|_{1,a}^2+\sum_{i=1}^N h_i^{-2}\|v_i\|_{0,a}^2
      \lesssim L^2 |v|_{1,a}^2
    \end{equation}
  \end{enumerate}
\end{theorem}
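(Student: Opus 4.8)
The plan is to reuse the decomposition already produced in \eqref{eq:proofdec}, so that the only remaining work is to verify the quantitative bounds. I set $v_0=\I^a_0v$, $v_i=(\I^a_i-\I^a_{i-1})v$ for $1\le i\le N$ (with $\I^a_{-1}:=0$), and take $\sum_{p\in\Lambda}v_p$ to be the nodal-basis expansion of the finest-mesh remainder $v-\I^a_Nv$. Since the construction in Section~\ref{sec:defIa} alters only the three nodal values at $p_i,p_{l_i},p_{r_i}$ when passing from $\T_{i-1}$ to $\T_i$ (Fig.~\ref{fig:updatepatch}), one gets $v_i\in\V_i$ for free, and the telescoping identity $\sum_{p}v_p+\sum_{i=0}^Nv_i=v$ is immediate. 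Hence everything reduces to estimating the three groups of terms on the left of \eqref{eq:stdav}.

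For the finest-mesh group I would use the equivalence, valid on a shape-regular mesh and carried out subdomain by subdomain so that the piecewise-constant weight $a$ is harmless, between $\sum_{p\in\Lambda}h_p^{-2}\|v_p\|_{0,a}^2$ and $\|h^{-1}(v-\I^a_Nv)\|_{0,a}^2$; since $\I^a_N$ is an operator of the type \eqref{eq:loc-int}, Theorem~\ref{thm:wl2app} bounds the latter by $|\log h_{\min}|\,\|v\|_{1,a}^2\eqsim L\,\|v\|_{1,a}^2$, while on $\widetilde{\V}$ Theorem~\ref{thm:wl2} replaces $\|v\|_{1,a}$ by $|v|_{1,a}$. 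For the coarsest-mesh term $\|v_0\|_{1,a}^2=\|\I^a_0v\|_{1,a}^2$ I would invoke that $\T_0$ is fixed with mesh size $\eqsim1$, so $\I^a_0$ is stable in the weighted $H^1$-norm with a constant independent of all parameters (Lemma~\ref{lm:int} subdomainwise together with the vanishing Dirichlet values), and for $v\in\widetilde{\V}$ the subdomainwise Poincar\'e--Friedrichs inequality turns $\|v\|_{1,a}$ into $|v|_{1,a}$. Each of these two groups costs only a factor $L$, which is dominated by $c_d(L)$.

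The substance is the intermediate sum $\sum_{i=1}^Nh_i^{-2}\|v_i\|_{0,a}^2$. For a single $i$, because $v_i\in\V_i$ is pinned down by its three nodal values on the patch $\widetilde\omega_i$, where both $\T_{i-1}$ and $\T_i$ have mesh size $\eqsim h_i$, one has $\|v_i\|_{0,a,\widetilde\omega_i}^2\eqsim h_i^{d}\sum_p a\,|v_i(p)|^2$; each $v_i(p)=(\I^a_i-\I^a_{i-1})v(p)$ is a difference of two local averages of $v$ at scale $h_i$ near $p$ (for the new vertex $p_i$ the value $(\I^a_{i-1}v)(p_i)$ is the midpoint value of the $\T_{i-1}$-function, again such a local average). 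A Poincar\'e estimate together with a trace estimate on the $(d-1)$-simplices $\sigma_p$ then bounds $h_i^{-1}\|v_i\|_{0,a,\widetilde\omega_i}$ by the weighted $H^1$-seminorm of $v$ on a slightly enlarged patch; when that patch touches an interface, this trace step is exactly where the weighted discrete Sobolev inequality Lemma~\ref{lm:2dtrace-discrete} (respectively Lemma~\ref{lm:3dtrace}) enters, producing a factor $|\log h_{\min}|^{1/2}$ in two dimensions and, at vertices on the codimension-2 skeleton of the interface where only a discrete inverse/trace bound is available, a factor scaling like $h_{\min}^{-d/2}$ in three dimensions. I would then sum by generation, $\sum_{i=1}^N=\sum_{k=1}^L\sum_{g_i=k}$: for fixed $k$ the patches $\widetilde\omega_i$ (and their enlargements) have bounded overlap, so by \eqref{eq:sumtilde} the inner sum is controlled by one global weighted $H^1$-seminorm, while the outer sum over the $L$ generations costs one more factor $L$. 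Collecting factors yields $c_2(L)=L^2$ and $c_3(L)=2^L\eqsim h_{\min}^{-d}$. For $v\in\widetilde{\V}$ I would sharpen the local estimate, using $\|v\|_{0,a}\lesssim|v|_{1,a}$ (subdomainwise Poincar\'e--Friedrichs) to absorb the dangerous codimension-2 contributions so that the discrete inverse bound is never invoked, which brings the three-dimensional constant back down to $L^2$ and gives \eqref{eq:stdatv}.

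The hard part will be this local estimate for $v_i$ near the interfaces and, above all, near their lower-dimensional skeletons. In three dimensions a refinement vertex may lie on the codimension-2 boundary of the interface, where neither the trace nor the pointwise value of a generic $H^1$ function makes sense; only a discrete trace/inverse inequality survives there, and keeping track of that contribution is what forces the $2^L$ growth of $c_3(L)$ in part~(1) and must be bypassed via Poincar\'e--Friedrichs to obtain part~(2). A second, more bookkeeping-flavored difficulty is to organize the sum over the $N$ compatible bisections so that the resulting constant depends only on the number of generations $L$ and not on $N$ — which is exactly what the bisection-grid structure of Section~\ref{sec:decomp} (compatible bisections of equal generation having disjoint patches, and the superposition estimate \eqref{eq:sumtilde}) is there to provide.
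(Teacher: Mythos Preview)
Your treatment of part~(ii) is essentially the paper's own argument: the same telescoping decomposition via the operators $\I_i^a$, the same use of Theorem~\ref{thm:wl2} for the remainder $v-\I_N^a v$, and the same generation-by-generation summation in which each of the $L$ levels contributes one factor $|\log h_{\min}|$, producing $L^2$.

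Part~(i), however, has a genuine gap. Every estimate you can extract from Theorem~\ref{thm:wl2app} and Lemma~\ref{lm:int} for the $\I_i^a$-decomposition lands on the \emph{full} weighted norm $\|v\|_{1,a}$, and you explicitly write this (``$L\,\|v\|_{1,a}^2$''). For $v\in\V\setminus\widetilde{\V}$ there is no coefficient-independent Poincar\'e--Friedrichs to convert $\|v\|_{1,a}$ into $|v|_{1,a}$: the floating subdomains are exactly where $\|v\|_{0,a}$ can exceed $|v|_{1,a}$ by the jump ratio $J(a)$. Your proposed remedy --- a discrete inverse/trace bound at codimension-$2$ vertices producing a factor $h_{\min}^{-d/2}$ --- does not match the lemmas (Lemma~\ref{lm:3dtrace} already gives $|\log h_{\min}|^{1/2}$ on 3D edges, not a negative power of $h$), and more fundamentally it cannot turn a full-norm bound into a seminorm bound: at a codimension-$2$ vertex $p$ the operator $\I_i^a$ assigns the value $0$, which does not reproduce constants, so no local Poincar\'e argument on the patch will yield $|v|_{1,a}$ on the right. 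The constants $c_2(L)=L^2$ and $c_3(L)=2^L$ do not arise from the interface analysis at all.

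The paper avoids this by using a \emph{different} decomposition for part~(i): the plain hierarchical-basis splitting based on the standard nodal interpolation $I_i:\V\to\V(\T_i)$, i.e.\ $v_i=(I_i-I_{i-1})v$. Because nodal interpolation is coefficient-blind and reproduces linears, the classical Yserentant/Bank estimates give, on each subdomain $\Omega_m$ separately, $\sum_i h_i^{-2}\|v_i\|_{L^2(\Omega_m)}^2\lesssim c_d(L)\,|v|_{H^1(\Omega_m)}^2$; multiplying by $a_m$ and summing over $m$ yields \eqref{eq:stdav} directly in terms of $|v|_{1,a}^2$. Thus the two parts of the theorem actually use two different decompositions, and your single-decomposition plan cannot deliver part~(i) with a jump-independent constant.
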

\begin{proof}
The result of (i) is standard. We may use the standard nodal interpolation operator to define a decomposition using the hierarchical basis (cf.~\cite{Xu.J1997b}). 

Now we prove (ii). Given a $v\in \widetilde{\V},$ we define $v_{0} := \I_{0}^{a}v$ and $v_{i} : = (\I_{i}^{a}- \I_{i-1}^{a})v.$ 
For $v-\I_{N}^{a} v = \sum_{p\in\Lambda}v_p,$ by the approximability of the quasi-interpolation, cf. \eqref{eq:wl2app}, we have  
\begin{equation}\label{eq:IT}
	\sum _{p\in \Lambda} h_p^{-2}\|v_p\|^2_{0,a} \lesssim \|h^{-1}(v-\I_N^{a}v)\|_{0,a}^2\lesssim L |v|_{1,a}^2.
\end{equation}
On the other hand, by Theorem \ref{thm:wl2} we obtain
\begin{eqnarray*}
    &&\left \|\I_0^a v\right \|_{1,a}^2+\sum_{i=1}^N
    h_i^{-2}\|(\I_i^a-\I_{i-1}^a)v\|_{0,a,\omega _i}^2 \\
    &&=\left \|\I_0^a v\right \|_{1,a}^2+\sum_{l=1}^L \sum_{g_{i} = l}
    h_l^{-2}\|(\I_i^a-\I_{i-1}^a)v\|_{0,a,\omega _i}^2 \\
    &&\lesssim
    \left(\sum_{i=1}^L |\log h_{\min}|\right) \|v\|_{1,a}^2\lesssim L^2 |v|_{1,a}^2.
 \end{eqnarray*}
Then \eqref{eq:stdatv} follows by adding the above inequality to inequality \eqref{eq:IT}. $\Box$
\end{proof}
\begin{remark}
The estimate \eqref{eq:stdav} is not uniform for $d\geq 2$. For $d=2$, $L\approx |\log h_{\min}|$ and the growth of $c_2(L)$ is acceptable. But for $d=3$, the constant $c_3(L)=2^L$ grows exponentially. This is the main reason that the hierarchical basis multilevel method deteriorates rapidly in 3D (cf. \cite{Yserentant.H1993,Bank.R1996}). For discontinuous coefficients problems, it seems unlikely to find a better decomposition with a better constants; see the counterexamples in~\cite{Bramble.J;Xu.J1991,Oswald.P1999c}.

If the coefficients satisfy certain monotonicity, e.g. quasi-monotonicity (cf.~\cite{Dryja.M;Sarkis.M;Widlund.O1996,Petzoldt.M2002}) in the local patches, one can show that the interpolation operator defined above is stable in the energy norm without deterioration. 
$\Box$\end{remark}

\begin{remark}
	\label{rk:stable}
	 With a close look at the proof of \eqref{eq:stdatv}, we may regroup the $v_{i}=(\I_i^a-\I_{i-1}^a)v$ into groups $\cup _{l=1}^{L'} G(l)= \{1, 2, \cdots, N\}$ such that for any $i,j\in G(l), \omega_{j} \cap \omega_{i} =\varnothing$ and therefore
	\begin{eqnarray*}
		\sum _{i=1}^N h_i^{-2} \|v_i\|_{0,a,\omega _i}^2 &=& \sum_{l=1}^{L'} \sum_{j\in G(l)} h_{j}^{-2}\|v_{j}\|_{0,a,\omega _i}^{2} 
	\leq L'|\log h_{\min}| |v|_{1,a}^2.
	\end{eqnarray*}
The constant $L'$ could be much smaller than $L$; see Section~\ref{sec:num} for numerical examples.
$\Box$\end{remark}

\subsection{Strengthened Cauchy-Schwarz inequality}
An important tool in analysis of the multiplicative preconditioner is the
following strengthened Cauchy-Schwarz inequality. A proof can be found in~\cite{Chen.L;Nochetto.R;Xu.J2007,Xu.J;Chen.L;Nochetto.R2009}.

\begin{lemma}[Strengthened Cauchy-Schwarz Inequality]\label{lm:scs}
    For any $u_i, v_i\in \V_i,\;\; i=0,1, \cdots, N,$ we have
  \begin{equation}\label{eq:scs}
    \left|\sum_{i=0}^N\sum_{j=i+1}^N A(u_i, v_j)\right|\lesssim \left(\sum_{i=0}^N
    |u_i|_{1,a}^2\right)^{\frac{1}{2}}\left(\sum_{i=0}^N
    h_i^{-2}\|v_i\|_{0,a}^2\right)^{\frac{1}{2}}.
  \end{equation}
\end{lemma}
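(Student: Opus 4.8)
The plan is to exploit the locality of the subspaces $\V_i$ together with their generational grading, so that the bilinear form couples $\V_i$ and $\V_j$ only through a factor that decays geometrically in $|g_i-g_j|$; the whole estimate then reduces to summing a geometric kernel. Throughout I would use three structural facts: since $\T_0$ resolves the subdomains, $a$ is constant on every element of every refinement $\T_i$; each $u_i,v_i\in\V_i$ is supported in $\widetilde{\omega}_i$, where $h\eqsim h_i\eqsim\gamma^{\,g_i}$; and for a fixed generation $k$ the patches $\{\widetilde{\omega}_i:g_i=k\}$ have bounded overlap, which is exactly \eqref{eq:sumtilde}. First I would split $\{0,1,\dots,N\}$ according to generation; the coarse indices with $g_i=0$ (so $u_i,v_i\in\V_0=\V(\T_0)$, of fixed dimension with $h\eqsim1$) are harmless and can be lumped in at the end, so one may assume $g_i\ge1$.

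The heart of the proof is the per-pair estimate: for all $i,j$,
\[
|A(u_i,v_j)|\ \lesssim\ \gamma^{\,|g_i-g_j|\,d/2}\;|u_i|_{1,a,\widetilde{\omega}_i}\;h_j^{-1}\|v_j\|_{0,a}.
\]
Suppose $g_i\le g_j$. Then $A(u_i,v_j)=(a\nabla u_i,\nabla v_j)$ is supported in $\widetilde{\omega}_j$, which by shape regularity (and the fact that in $\T_i$ the region around the future vertex $p_j$ has generation at most $g_j$) is covered by $\mathcal{O}(1)$ elements $\tau\in\T_i$, on each of which $a$ and $\nabla u_i$ are constant and $|\tau|\eqsim\gamma^{\,g_i d}$; since also $|\widetilde{\omega}_j|\eqsim\gamma^{\,g_j d}$, on each such $\tau$
\[
|(a\nabla u_i,\nabla v_j)_\tau|\ \le\ \|a^{1/2}\nabla u_i\|_{L^\infty(\tau)}\,|\widetilde{\omega}_j|^{1/2}\,\|a^{1/2}\nabla v_j\|_{L^2(\tau)}\ \lesssim\ \gamma^{(g_j-g_i)d/2}\,\|a^{1/2}\nabla u_i\|_{L^2(\tau)}\,|v_j|_{1,a},
\]
using $\|a^{1/2}\nabla u_i\|_{L^\infty(\tau)}=|\tau|^{-1/2}\|a^{1/2}\nabla u_i\|_{L^2(\tau)}$. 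Summing over the $\mathcal{O}(1)$ elements and applying the standard inverse inequality $|v_j|_{1,a}\lesssim h_j^{-1}\|v_j\|_{0,a}$ on $\widetilde{\omega}_j$ gives the claim, with decay factor $\gamma^{(g_j-g_i)d/2}=2^{-(g_j-g_i)/2}$. The case $g_i>g_j$ is the mirror image: one freezes $a\nabla v_j$ on the $\mathcal{O}(1)$ elements of $\T_j$ meeting $\widetilde{\omega}_i$, extracts $|\widetilde{\omega}_i|^{1/2}\eqsim\gamma^{\,g_i d/2}$, bounds $\|a^{1/2}\nabla v_j\|_{L^\infty}$ by $\gamma^{-g_j d/2}\|a^{1/2}\nabla v_j\|_{L^2}$, and again invokes the inverse inequality.

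The summation is then bookkeeping. Fix generations $k,l$ and set $\mu=2^{-|k-l|/2}$. Using the per-pair estimate, the bounded overlap of $\{\widetilde{\omega}_i:g_i=k\}$ and of $\{\widetilde{\omega}_j:g_j=l\}$, and Cauchy--Schwarz in the index sums,
\[
\sum_{g_i=k}\sum_{g_j=l}|A(u_i,v_j)|\ \lesssim\ \mu\Big(\sum_{g_i=k}|u_i|_{1,a}^2\Big)^{1/2}\Big(\sum_{g_j=l}h_j^{-2}\|v_j\|_{0,a}^2\Big)^{1/2}\ =:\ \mu\,\alpha_k\beta_l.
\]
A standard Schur-type bound for the geometric kernel gives $\sum_{k,l}\mu\,\alpha_k\beta_l\lesssim(\sum_k\alpha_k^2)^{1/2}(\sum_l\beta_l^2)^{1/2}$, and since $\sum_k\alpha_k^2=\sum_i|u_i|_{1,a}^2$ and $\sum_l\beta_l^2=\sum_j h_j^{-2}\|v_j\|_{0,a}^2$, and $\big|\sum_{i}\sum_{j>i}A(u_i,v_j)\big|\le\sum_{i\ne j}|A(u_i,v_j)|$, this yields \eqref{eq:scs}; the level-$0$ terms are absorbed the same way using $h_0\eqsim1$. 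The main obstacle is the per-pair estimate, and within it the geometric bookkeeping on bisection grids: one must check via shape regularity and the structure of Theorem~\ref{th:decomposition} that a patch of one generation meets only $\mathcal{O}(1)$ elements of a refinement of another generation, and that on those elements $a\nabla u_i$ (resp. $a\nabla v_j$) is a genuine single constant, so that the inverse-inequality step produces the decay $2^{-|g_i-g_j|/2}$; once that is secured, the rest is routine.
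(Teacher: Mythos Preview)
The paper itself does not prove this lemma but defers to~\cite{Chen.L;Nochetto.R;Xu.J2007,Xu.J;Chen.L;Nochetto.R2009}. Your overall strategy---a per-pair estimate with geometric decay in $|g_i-g_j|$, regrouping by generation, and a Schur-type summation---is indeed the standard route taken there. However, your summation step has a genuine gap: the decay you extract is exactly cancelled by the combinatorics, so no geometric kernel survives.

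Concretely, for $k:=g_i\le g_j=:l$ your per-pair bound reads $|A(u_i,v_j)|\lesssim \mu\,|u_i|_{1,a}\,h_j^{-1}\|v_j\|_{0,a}$ with $\mu=2^{-(l-k)/2}$, nonzero only when $\widetilde\omega_i\cap\widetilde\omega_j\ne\varnothing$. For fixed $j$ there are $\mathcal O(1)$ admissible $i$ (bounded overlap of the coarse patches), but for fixed $i$ there are $\mathcal O\big((h_k/h_l)^d\big)=\mathcal O(2^{\,l-k})$ admissible $j$, since a coarse patch of diameter $h_k$ can meet that many disjoint fine patches of diameter $h_l$. Any Cauchy--Schwarz over the index pairs therefore produces an extra factor $2^{(l-k)/2}$, which cancels $\mu$ and leaves only
\[
\sum_{g_i=k}\sum_{g_j=l}|A(u_i,v_j)|\ \lesssim\ \alpha_k\beta_l,
\]
not $\mu\,\alpha_k\beta_l$; the double sum $\sum_{k,l}\alpha_k\beta_l$ is not controlled by $(\sum_k\alpha_k^2)^{1/2}(\sum_l\beta_l^2)^{1/2}$. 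The mirror case $k>l$ exhibits the same cancellation.

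What is missing is a sharper use of the piecewise-constant structure: on each $\tau\in\T_i$ within $\widetilde\omega_i$ one has $\int_\tau a\nabla u_i\cdot\nabla v_j=(a\nabla u_i)|_\tau\cdot\int_{\partial\tau}v_j\,n$, so the interaction lives on the $(d-1)$-skeleton of $\T_i$. In particular $A(u_i,v_j)=0$ whenever $\widetilde\omega_j$ lies in the interior of a single coarse element (because $v_j$ vanishes on $\partial\widetilde\omega_j$, whence $\int\nabla v_j=0$). This cuts the count of contributing fine patches from $\mathcal O((h_k/h_l)^d)$ to $\mathcal O((h_k/h_l)^{d-1})$, so that a net geometric factor in $l-k$ survives and the Schur bound closes. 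Your $L^\infty$--$L^2$ step alone is one power of $h_l/h_k$ short.
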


As a corollary of \eqref{eq:scs} and the inverse inequality, we have
\begin{equation}\label{eq:sumui}
\Big \|\sum _{i=0}^N u_i \Big \|_{1,a}^2 \lesssim \sum _{i=0}^Nh_i^{-2}\|u_i\|_{0,a}^2.
\end{equation}

\section{Multilevel Preconditioners} 
\label{sec:precond}
In this section, we shall analysis the eigenvalue distribution of the BPX preconditioner and the multigrid $V$-cycle preconditioner on bisection grids, and prove the effective conditioner number is uniformly bounded.

\subsection{BPX (Additive) Preconditioner}
To simplify the notation, we include $\mathcal V_{N+1} = \mathcal V$ and rewrite our space decomposition as $\mathcal V = \sum _{i=0}^{N+1} \V_{i}.$ Based on this space decomposition, we choose SPD smoothers $R_i: \mathcal V_i\to \mathcal V_i$ satisfying
\begin{equation}\label{eq:smbpx}
(R_i^{-1} u_i,u_i)_{0,a}\eqsim h^{-2}_i
  (u_i,u_i)_{0,a},
  \;\; \forall u_i\in \V_i .
\end{equation}

According to~\cite{Xu.J;Zhu.Y2008}, both of the standard Jacobi and
symmetric Gauss-Seidel smoother satisfy the above assumption. On
the coarsest level, i.e. when $i=0$, we choose the exact solver
$R_0=A_0^{-1}.$  Let $Q_i^a: \mathcal V\to \V_i$ be the weighted $L^2$ projection. Then we can define the BPX-type preconditioner
\begin{equation}\label{eq:BPX}
  B = \sum_{i=0}^{N+1} R_i Q_i^a.
\end{equation}

It is well known 
\cite{Widlund.O1992,Xu.J1992,Xu.J;Zikatanov.L2002} that the operator $B$ defined by \eqref{eq:BPX} is SPD, and
\begin{equation}\label{eq:Bidentity}
    (B^{-1}v,v)_{0,a}=\inf_{\sum_{i=0}^{N+1} v_i=v}\sum_{i=0}^{N+1}
    (R_i^{-1}v_i,v_i)_{0,a}.
\end{equation}

We have the following main result for the BPX preconditioner.
\begin{theorem}\label{th:bpx}
Given a triangulation $\T_N = \T_0 + \B$ in $\mbb T(\T_0)$, let $L=\max_{\tau\in \T_N}g(\tau)$. For the BPX preconditioner defined in \eqref{eq:BPX}, we have
  $$\kappa(BA)\leq C_1c_d(L),\hbox{ and } \kappa_{m_0}(BA)\leq  C_0L^2.$$
  Consequently, we have the following convergence estimation of the BPX preconditioned
  conjugate gradient method:
  \begin{eqnarray*}
   \frac{\|u-u_k\|_A}{\|u-u_0\|_A} &\le& 2
\left(C_1c_d(L)-1\right)^{m_0}\left(\frac{C_0L-1}{C_0L+1}\right)^{k-m_0}.
  \end{eqnarray*}
\end{theorem}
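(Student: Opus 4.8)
The plan is to apply the abstract framework for additive (BPX) preconditioners, which reduces the estimate on $\kappa(BA)$ and $\kappa_{m_0}(BA)$ to establishing a two-sided bound between $(B^{-1}v,v)_{0,a}$ and $\|v\|_A^2$ via the identity \eqref{eq:Bidentity}. Concretely, I would show:
\begin{enumerate}
\item \emph{Upper bound for $\lambda_{\max}(BA)$.} Using \eqref{eq:Bidentity}, for any decomposition $v=\sum_{i=0}^{N+1}v_i$ we have $(B^{-1}v,v)_{0,a}\le \sum_i (R_i^{-1}v_i,v_i)_{0,a}\eqsim \sum_i h_i^{-2}\|v_i\|_{0,a}^2$ (with the convention $h_{N+1}\eqsim h_{\min}$ for the extra finest-grid space). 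Choosing the decomposition from Theorem~\ref{th:stda}(i) and invoking the strengthened Cauchy--Schwarz consequence \eqref{eq:sumui}, we get $\|v\|_A^2 = |v|_{1,a}^2 \lesssim \sum_i h_i^{-2}\|v_i\|_{0,a}^2$, hence $\lambda_{\max}(BA)\lesssim 1$; combined with the reverse, this is where the factor $c_d(L)$ enters. Actually the clean statement is: by \eqref{eq:sumui} and \eqref{eq:smbpx}, $\|v\|_A^2\lesssim (B^{-1}v,v)_{0,a}$ for \emph{every} admissible decomposition, so taking the infimum gives $A(v,v)\lesssim (B^{-1}v,v)_{0,a}$, i.e. $\lambda_{\max}(BA)\le C$. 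Here the factor $c_d(L)$ is \emph{not} needed for the upper eigenvalue bound.
\item \emph{Lower bound for $\lambda_{\min}(BA)$.} This is where $c_d(L)$ appears. For arbitrary $v\in\V$ use the decomposition of Theorem~\ref{th:stda}(i): $(B^{-1}v,v)_{0,a}\le \sum_i (R_i^{-1}v_i,v_i)_{0,a}\eqsim \sum_p h_p^{-2}\|v_p\|_{0,a}^2 + \|v_0\|_{1,a}^2 + \sum_{i\ge1}h_i^{-2}\|v_i\|_{0,a}^2 \lesssim c_d(L)|v|_{1,a}^2 = c_d(L)\,A(v,v)$. Hence $\lambda_{\min}(BA)\gtrsim c_d(L)^{-1}$, giving $\kappa(BA)\le C_1 c_d(L)$.
\item \emph{Effective condition number $\kappa_{m_0}(BA)$.} Apply the Courant minimax principle, Theorem~\ref{th:minimax}, with $T=BA$ and inner product $(B^{-1}\cdot,\cdot)_{0,a}$. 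Take $\V_0:=\widetilde{\V}$, which has codimension $m_0$ in $\V$ (one constraint $\int_{\Omega_i}v=0$ per floating subdomain). For $v\in\widetilde{\V}$, repeat step (2) but now with the \emph{improved} decomposition of Theorem~\ref{th:stda}(ii), which costs only $L^2$ instead of $c_d(L)$: $(B^{-1}v,v)_{0,a}\lesssim L^2|v|_{1,a}^2 = L^2 A(v,v)$. Thus $\min_{0\ne v\in\widetilde\V} A(v,v)/(B^{-1}v,v)_{0,a}\gtrsim L^{-2}$, so $\lambda_{m_0+1}(BA)\gtrsim L^{-2}$ by \eqref{eq:mm}, while $\lambda_{\max}(BA)\le C$ from step (1); hence $\kappa_{m_0}(BA)\le C_0 L^2$.
\end{enumerate}

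The final convergence estimate then follows by plugging $\kappa(BA)\le C_1 c_d(L)$ and $\kappa_{m_0}(BA)\le C_0 L^2$ (so $\sqrt{\kappa_{m_0}(BA)}\le \sqrt{C_0}\,L$, absorbing constants) into Corollary~\eqref{eq:conv} with $m=m_0$; the bound $(\sqrt{\kappa_{m_0}}-1)/(\sqrt{\kappa_{m_0}}+1)\le (C_0L-1)/(C_0L+1)$ after renaming the constant, and $(\kappa(BA)-1)^{m_0}\le(C_1c_d(L)-1)^{m_0}$, yield exactly the displayed inequality.

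The main obstacle is \emph{bookkeeping the two distinct decompositions consistently}: step (1) needs the stability estimate \eqref{eq:sumui} (a consequence of the strengthened Cauchy--Schwarz inequality, Lemma~\ref{lm:scs}) to handle the \emph{upper} eigenvalue bound uniformly, and it is important to note that this upper bound does \emph{not} degrade with $L$ — only the lower bound does. One subtlety worth care is the role of the artificially appended space $\V_{N+1}=\V$ with its smoother: one must check \eqref{eq:smbpx} is compatible with assigning it mesh size $h_{\min}$ and that the high-frequency remainder $v-\I_N^a v$ (which lives essentially in the finest scale) is absorbed there with the right weight, which is precisely what \eqref{eq:IT} in the proof of Theorem~\ref{th:stda} provides. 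Everything else is a routine assembly of Theorems~\ref{th:minimax}, \ref{th:stda} and Lemma~\ref{lm:scs}; no new inequality is needed.
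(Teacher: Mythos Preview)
Your proposal is correct and follows essentially the same route as the paper: bound $\lambda_{\max}(BA)\lesssim 1$ via \eqref{eq:sumui} and \eqref{eq:smbpx}, bound $\lambda_{\min}(BA)$ and $\lambda_{m_0+1}(BA)$ via the two parts of Theorem~\ref{th:stda} combined with \eqref{eq:Bidentity} and Theorem~\ref{th:minimax}, then invoke Theorem~\ref{th:pcg}. One small imprecision: for the finest-level term the right scaling is the local mesh function $h$ (so $(R_{N+1}^{-1}\tilde v,\tilde v)_{0,a}\eqsim\|h^{-1}\tilde v\|_{0,a}^2$), not a global $h_{\min}$, but your reference to \eqref{eq:IT} shows you have the correct weight in mind.
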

\begin{proof}
First of all, let us estimate $\lambda_{\max}(BA).$ For any decomposition $v = \tilde v + \sum _{i=0}^N v_i, \tilde v\in \mathcal V, v_i\in \mathcal V_i$, we have
\begin{align*}
\| v\|_A^2 &\lesssim \|\tilde v\|_A^2 + \Big \|\sum _{i=0}^N v_i \Big \|_A^2 
\leq \|h^{-1}\tilde v\|_{0,a}^2 + \sum _{i=0}^N h_i^{-2}\|v_i\|_{0,a}^2 
\leq \sum_{i=0}^{N+1}(R_i^{-1}v_i,v_i)_{0,a}.
\end{align*}
In the second step, we used the inverse inequality and the inequality \eqref{eq:sumui}. In the third step, we used the assumption \eqref{eq:smbpx} of $R_i$ .  
Taking infimum, we get
  $$\|v\|_A^2\lesssim \inf_{\sum_{i=0}^{N+1} v_i=v}\sum_{i=0}^{N+1}
    (R_i^{-1}v_i,v_i)_{0,a} = (B^{-1}v,v)_{0,a},
  $$ which implies
  that $\lambda_{\max}(BA)\lesssim 1.$  

To estimate $\lambda_{\min}, $ in view of \eqref{eq:Bidentity} we choose the decomposition as in the stable decomposition Theorem~\ref{th:stda}  (see \eqref{eq:stdav}) to conclude that
$$
(B^{-1}v,v)_{0,a}\leq \sum_{i=0}^{N+1}
    (R_i^{-1}v_i,v_i)_{0,a}\lesssim 
    c_d(L)(Av,v)_{0,a},
$$
which implies that $\lambda_{\min}(BA)\gtrsim c_d(L).$    Therefore we have $\kappa(BA)\lesssim c_d(L).$

On the other hand, if we apply \eqref{eq:stdatv} in the subspace $\widetilde{\V}\subset \V,$ we obtain $\lambda_{m_0 +1}(BA)\gtrsim L^2$ by the ``min-max" Theorem
  \ref{th:minimax}. Hence we get an estimate of the effective condition
  number $\kappa_{m_0}(BA)\lesssim L^2.$ The convergence rate
  estimate then follows by Theorem \ref{th:pcg}. This completes the
  proof. $\Box$
\end{proof}

From this convergence result, we can see that the convergence rate
will deteriorate a little bit by $c_d(L)$ as $L$ grows. But since
$m_0$ is a fixed number, when $k$ grows, the convergence rate will
be controlled by the effective condition number, which is bounded uniformly
with respect to the coefficient and logarithmically with respect to the mesh size. Notice
that $L\eqsim |\log h_{\min}|$ and thus the asymptotic convergence rate of the
PCG algorithm is $1-\frac{1}{C|\log h_{\min}|}$ for $h<1.$
\begin{remark}
The estimate $\kappa(BA)\leq C_1c_d(L)$ is sharp in the sense that there exists an example on BPX preconditioner such that $\kappa (BA)\eqsim c_d(L)$ (cf.~\cite{Oswald.P1999c}).
$\Box$\end{remark}

\begin{remark}\label{rk:wst}
  Here we should emphasize that the convergence rate estimate in Theorem \ref{th:bpx} holds for general substructures. In some special circumstance, for example ``edge
  type" or ``exceptional" in the terminology in
 ~\cite{Oswald.P1999c}, or ``quasi-monotone" coefficient in
 ~\cite{Dryja.M;Sarkis.M;Widlund.O1996}, we can sharpen the
  convergence estimate in Theorem \ref{th:bpx} by a modification of Theorem \ref{th:stda},
  see~\cite{Oswald.P1999c}. 
$\Box$\end{remark}

\subsection{Multigrid (Multiplicative) Preconditioner}
We shall use the following symmetric V-cycle multigrid as a preconditioner in the PCG method and prove the efficiency of such a method. Let $A_{i}:= A|_{\V_{i}}.$ Then one step of the standard $V$-cycle multigrid $B:\V \to \V$ is recursively defined as follows:

\smallskip
\framebox[12.11cm]{
\parbox[htc]{9.7cm}{
Let $B_0=A_0^{-1},$ for $i>0$ and
$g\in \V_i,$ define $B_i g=w_3.$
\begin{enumerate}
  \item Presmoothing : $w_1=R_i g;$
  \item Correction: $w_2=w_1+ B_{i-1}Q_{i-1}(g-A_i w_1);$
  \item Postsmoothing: $w_3=w_2+R_i^*(g-A_i w_2).$
\end{enumerate}
Set $B = B_{N+1}.$}
}
\smallskip

For simplicity, we focus on the case of exact subspace solver, i.e.,
$R_i=A_i^{-1}$ for $i=0, \cdots, N$ and for the finest level, $R_{N+1}$ is chosen as Gauss-Seidel smoother, which can be also understood as the multiplicative method with exact local solvers applied to the nodal decomposition~\cite{Xu.J1992a}.  Let $P_{p}: \mathcal V\to \mathcal V_p$ and $P_i:\V\to \V_i$ be the orthogonal projection with respect to the inner product $(\cdot,\cdot)_a$. For our special choices of smoothers, we then have
\begin{align*}
I - R_{N+1}A &= \prod _{p\in \Lambda}(I- P_p),\\
I - B_{N}A &= \left(\prod_{i=0}^N(I-P_i)\right)^*
\left(\prod_{i=0}^N(I-P_i)\right),\\
\left\|I-BA\right\|_A&=\left\|\prod_{i=0}^N(I-P_i)\prod _{p\in \Lambda}(I- P_p)\right\|_A^2.
\end{align*}
For exact local solvers, we can apply the crucial X-Z identity~\cite{Xu.J;Zikatanov.L2002} to conclude
\begin{equation}\label{eqn:xz}
\left\|I-BA\right\|_A = 1-\frac{1}{1+c_0},
\end{equation}
where
 \begin{eqnarray*}
  &&c_0=\sup_{\left\|v\right\|_A=1}\inf_{v=\sum_{p\in \Lambda}v_p + \sum _{i=0}^N v_i}\left (\sum_{i=0}^N \Big \|P_i\sum_{j=i+1}^N v_j +P_i \sum_{p\in \Lambda}v_p \Big \|^2_A 
  +\sum _{p\in \Lambda}\big \|P_p \sum _{q>p}v_q\big \|_A^2\right) .
  \end{eqnarray*}

\begin{theorem}\label{th:vp}
Given a triangulation $\T_N = \T_0 + \B$ in $\mbb T(\T_0)$, let $L=\max_{\tau\in \T_N}g(\tau)$. For the multigrid $V$-cycle preconditioner $B$, we have
  $$\kappa(BA)\lesssim c_d(L),\;\; \kappa_{m_0}(BA)\lesssim L^2.$$
  Consequently, we have the following the convergence rate estimate of the BPX preconditioned
  conjugate gradient method:
  \begin{eqnarray*}
\frac{\|u-u_k\|_A}{\|u-u_0\|_A} &\le& 2
\left(C_1c_d(L)-1\right)^{m_0}\left(\frac{C_0L-1}{C_0L+1}\right)^{k-m_0}.
  \end{eqnarray*}
\end{theorem}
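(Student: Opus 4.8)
The plan is to mirror the proof of Theorem~\ref{th:bpx}, replacing the additive-identity \eqref{eq:Bidentity} with the multiplicative X--Z identity \eqref{eqn:xz}, and then estimating the quantity $c_0$ from above (and its analog from below on the subspace $\widetilde{\V}$) using the stable decomposition Theorem~\ref{th:stda} together with the strengthened Cauchy--Schwarz inequality Lemma~\ref{lm:scs}. The key observation is that $\|I-BA\|_A = 1 - 1/(1+c_0)$ translates into the two-sided bound $\lambda_{\max}(BA)\le 1$ and $\lambda_{\min}(BA)\gtrsim (1+c_0)^{-1}$, so a uniform-type bound $c_0 \lesssim c_d(L)$ yields $\kappa(BA)\lesssim c_d(L)$, and a bound $c_0 \lesssim L^2$ on $\widetilde{\V}$ (combined with a ``min-max''-type argument to peel off the $m_0$ bad eigenvalues) yields $\kappa_{m_0}(BA)\lesssim L^2$.

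First I would bound $c_0$ from above. Fix $v$ with $\|v\|_A=1$ and take the decomposition $v = \sum_{p\in\Lambda}v_p + \sum_{i=0}^N v_i$ furnished by \eqref{eq:stdav}, so that $\sum_p h_p^{-2}\|v_p\|_{0,a}^2 + \|v_0\|_{1,a}^2 + \sum_{i=1}^N h_i^{-2}\|v_i\|_{0,a}^2 \lesssim c_d(L)$. For each term $\|P_i \sum_{j>i} v_j + P_i\sum_p v_p\|_A^2$, I would use that $P_i$ is an $A$-orthogonal projection onto $\V_i$, so $\|P_i w\|_A = \sup_{0\ne u_i\in\V_i} A(u_i,w)/\|u_i\|_A$; applying the strengthened Cauchy--Schwarz inequality \eqref{eq:scs} to the tail sums and the inverse inequality on $\V_i$ gives $\|P_i\sum_{j>i}v_j\|_A \lesssim (\sum_{j>i} h_j^{-2}\|v_j\|_{0,a}^2)^{1/2}$ and similarly for the $\sum_p v_p$ piece. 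Summing the squares over $i$ and over $p$ and invoking \eqref{eq:stdav} bounds $c_0 \lesssim c_d(L)$, hence $\kappa(BA)\lesssim c_d(L)$. For the finest-level wavelet-type terms $\|P_p\sum_{q>p}v_q\|_A^2$ one uses the strengthened Cauchy--Schwarz property of the nodal basis on $\T_N$ in the same fashion. The lower bound $\lambda_{\min}\gtrsim c_d(L)^{-1}$ is then immediate from \eqref{eqn:xz}, and $\lambda_{\max}(BA)\le 1$ holds because $\|I-BA\|_A<1$ forces $BA$ to be a contraction's complement.

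For the effective condition number I would repeat the $c_0$-estimate but with $v\in\widetilde{\V}$, using the sharper stable decomposition \eqref{eq:stdatv} which has constant $L^2$ rather than $c_d(L)$; this gives $c_0|_{\widetilde{\V}} \lesssim L^2$. Since $\widetilde{\V}$ has codimension $m_0$ in $\V$, the Courant min--max principle (Theorem~\ref{th:minimax}, applied to $T=BA$ with the $(B^{-1}\cdot,\cdot)_{0,a}$ inner product) yields $\lambda_{m_0+1}(BA)\gtrsim L^{-2}$, and combined with $\lambda_{\max}(BA)\lesssim 1$ this gives $\kappa_{m_0}(BA)\lesssim L^2$. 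Plugging $\kappa(BA)\lesssim c_d(L)$ and $\kappa_{m_0}(BA)\lesssim L^2$ into Theorem~\ref{th:pcg} (with $m=m_0$, $\beta/\alpha = \kappa_{m_0}(BA)$, and $K\le(\kappa(BA)-1)^{m_0}$ from \eqref{eq:m}) produces the stated convergence estimate. The main obstacle is the careful bookkeeping in the $c_0$-estimate: one must correctly handle the mixed cross terms $P_i\sum_p v_p$ linking the hierarchical levels $\V_i$ to the finest-grid nodal spaces $\V_p$, making sure the strengthened Cauchy--Schwarz inequality \eqref{eq:scs} and the inverse inequality are applied in a way that the summation over $i$ and over $p$ closes back onto exactly the left-hand side of \eqref{eq:stdav}/\eqref{eq:stdatv} without losing an extra factor of $L$ — everything else is a routine transcription of the BPX argument.
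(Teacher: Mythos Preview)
Your overall architecture matches the paper's: use the X--Z identity \eqref{eqn:xz}, bound $c_0$ via the stable decomposition of Theorem~\ref{th:stda} and the strengthened Cauchy--Schwarz inequality, then invoke the min--max principle on $\widetilde{\V}$ for the effective condition number. But the mechanism you propose for bounding $c_0$ has a real gap. Your termwise estimate $\|P_i\sum_{j>i}v_j\|_A \lesssim (\sum_{j>i} h_j^{-2}\|v_j\|_{0,a}^2)^{1/2}$ is correct (it follows from \eqref{eq:scs} by setting all $u_k=0$ for $k\neq i$), but squaring and summing over $i$ produces $\sum_i\sum_{j>i}h_j^{-2}\|v_j\|_{0,a}^2$, which carries an uncontrolled factor of order $N$, not $c_d(L)$ or $L^2$. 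The paper circumvents this by setting $u_i:=P_i\sum_{j>i}v_j$, writing $\sum_i\|u_i\|_A^2=\sum_i\sum_{j>i}A(u_i,v_j)$, applying \eqref{eq:scs} once to the full double sum, and then absorbing the factor $(\sum_i|u_i|_{1,a}^2)^{1/2}$ back into the left side. This self-absorption is precisely the ``closing without losing a factor of $L$'' you identify as the obstacle but do not carry out.

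The cross term also needs different handling than you suggest. For $\sum_{i}\|P_i\tilde v\|_A^2$ with $\tilde v=\sum_p v_p=v-\I_N^a v$, the paper does \emph{not} use \eqref{eq:scs}; instead it uses the locality of $P_i$ (support in $\widetilde\omega_i$), regroups the indices by generation, and applies the disjoint-patch estimate \eqref{eq:sumtilde} to obtain $\sum_i\|P_i\tilde v\|_A^2\lesssim L\|\tilde v\|_A^2$, which is then controlled via the approximation property of $\I_N^a$. Likewise the finest-level term $\sum_p\|P_p\sum_{q>p}v_q\|_A^2$ is bounded by the finite overlap of nodal patches on $\T_N$, not by \eqref{eq:scs}. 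So the three pieces of $c_0$ are handled by three genuinely different tools (self-absorbing SCS, generation regrouping, nodal overlap), and your ``and similarly'' glosses over the second and third.
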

\begin{proof}
Since $I-BA$ is a non-expansive operator, we conclude $\lambda_{\max}(BA)\leq 1$. Since $I-BA$ is SPD in the $A$-inner product and $\lambda_{\max}(BA)\leq 1$, we have
\begin{eqnarray*}
 \|I-BA\|_A &=&\max \{|1-\lambda _{\min}(BA)|, |1-\lambda
_{\max}(BA)|\} 
= 1-\lambda _{\min}(BA).
\end{eqnarray*}
To get an estimate on the minimum eigenvalue of $BA$, we only need to get a upper bound of the constant $c_0$ in \eqref{eqn:xz}.

To do so, for any $v\in \mathcal V$, we chose the decomposition in Theorem~\ref{th:stda}. That is,
$$
	v= \tilde{v} + \sum_{i=1}^{N} v_{i}, \mbox{ with } v_{0} = \I_{0}^{a} v,\;\; v_{i} = (\I_{i}^{a} - \I_{i-1}^{a}) v,
$$
where $\tilde{v} = v - \I_{N}^{a} v = \sum_{p\in \Lambda} v_{p}.$
Then by shape regularity of the triangulation, we have
$$
	c_{0} \lesssim \sum_{i=0}^N \Big \|P_i\sum_{j=i+1}^N v_j \Big \|_A^2+ \sum _{i=0}^N\left \|P_i \tilde v\right \|_A^2+ \sum _{p\in \Lambda}\left \|P_p \sum _{q>p}v_q\right \|_A^2.
$$
We estimate these three terms as follows. For the last term, by the finite overlapping of nodal bases, we have
\begin{eqnarray*}
\sum _{p\in \Lambda}\big \|P_p \sum _{q>p}v_q\big \|_A^2 &\lesssim& \sum _{p\in \Lambda}\big \|\sum _{q>p}v_q\big \|_{A,\omega_{p}}^2\\
&\lesssim& \sum _{p\in \Lambda}\|v_p\|_{A,\omega_{p}}^2\lesssim \sum _{p\in \Lambda}h_p^{-2}\|v_p\|_{0,a,\omega_{p}}^2\\
&\lesssim& \|h^{-1}(v- \I_{N}^{a} v)\|_{0,a}^{2}\lesssim \|v\|_A^2.
\end{eqnarray*}

For the middle term, we regroup by generations and use \eqref{eq:sumtilde} to get
\begin{eqnarray*}
\sum _{i=0}^N\Big \|P_i \tilde v\Big \|_A^2 &=& \sum _{k=0}^L\sum _{l, g_l =k}\Big \|P_l \tilde v\Big \|_A^2\leq \sum _{k=0}^L\sum _{l, g_l =k}\|\tilde v\|_{A,\tilde \omega _l}^2\\
&\lesssim&  \sum _{k=0}^L\|\tilde v\|_{A}^2 = L \|\tilde v\|_{A}^2.
\end{eqnarray*}

For the first term, we define $u_i=P_i \left(\sum_{j=i+1}^N v_j\right)$ and $u_0:= P_0 (v-v_0)$ and apply the strengthened Cauchy Schwarz inequality, cf.  Lemma \ref{lm:scs} to get
\begin{eqnarray*}
\sum_{i=0}^N \Big \|P_i\sum_{j=i+1}^N v_j \Big \|_A^2 &=&   \sum_{i=0}^N \sum_{j=i+1}^N A(u_i, v_j) \\
&\lesssim& \|v-v_0\|_A^2+\sum_{i=1}^N h_i^{-2}
  \|v_i\|_{0,a}^2\\
  &\lesssim& c_d(L)\|v\|_A^2.
\end{eqnarray*}
Here the constant $c_d(L)$ can be improved to $L^2$ if we consider the decomposition \eqref{eq:stdatv} of $v\in \widetilde{\V}.$ 
Combined with the Mini-Max Theorem \ref{th:minimax}, yields
  $$\lambda _{\min}(BA)\gtrsim c_d(L), \;\; \lambda_{m_0+1}(BA)\gtrsim L^{-2},$$ and thus 
  $$
\kappa(BA)\lesssim c_d(L),  \quad  \kappa_{m_0}(BA)\lesssim L^2.
  $$
  Finally, the convergence rate of the PCG method follows by Theorem \ref{th:pcg}. $\Box$
\end{proof}

Follow the same proof as Theorem \ref{th:vp}, we can also obtain the following convergence result for the local multigrid $V$-cycle solver.
\begin{corollary}\label{cor:vc}
  For the multigrid $V$-cycle algorithm defined above on bisection grids, we have
  $$\|E\|_A=\|I-BA\|_A=1-\frac{1}{1+c_0},$$ where $c_0\lesssim c_d(L).$
\end{corollary}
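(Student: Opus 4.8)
The plan is to extract the statement from the proof of Theorem~\ref{th:vp}, of which it is essentially a restatement. The operator $E:=I-BA$ is, by definition, the error propagation operator of the symmetric $V$-cycle used as a stationary iteration. First I would note that, because the $V$-cycle is symmetric (presmoothing with $R_i$, postsmoothing with $R_i^*$, exact solvers on $\V_0,\dots,\V_N$, and a Gauss--Seidel sweep on $\V_{N+1}=\V$), $E$ is self-adjoint in the $A$-inner product and non-expansive, so $\|E\|_A$ is its spectral radius and $\lambda_{\max}(BA)\le 1$. The value of $\|E\|_A$ is then given by the X--Z identity~\cite{Xu.J;Zikatanov.L2002}, which is precisely formula~\eqref{eqn:xz}: $\|I-BA\|_A=1-1/(1+c_0)$ with $c_0$ the sup--inf constant displayed there. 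This already yields the two equalities in the statement, so the only thing left is the bound $c_0\lesssim c_d(L)$.

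For that I would reprise, essentially verbatim, the estimate of $c_0$ carried out inside the proof of Theorem~\ref{th:vp}. Given $v\in\V$ with $\|v\|_A=1$, I feed the infimum defining $c_0$ the explicit stable decomposition of Theorem~\ref{th:stda}: $v=\tilde v+\sum_{i=0}^N v_i$, where $v_0=\I_0^a v$, $v_i=(\I_i^a-\I_{i-1}^a)v\in\V_i$, and $\tilde v=v-\I_N^a v=\sum_{p\in\Lambda}v_p$. By shape regularity, $c_0$ is then dominated by
\[
\sum_{i=0}^N\Big\|P_i\sum_{j=i+1}^N v_j\Big\|_A^2+\sum_{i=0}^N\|P_i\tilde v\|_A^2+\sum_{p\in\Lambda}\Big\|P_p\sum_{q>p}v_q\Big\|_A^2 .
\]
The third sum I bound by the finite overlap of the nodal basis functions together with the approximability of $\I_N^a$ (Theorem~\ref{thm:wl2app}), getting $\sum_{p}h_p^{-2}\|v_p\|_{0,a}^2\lesssim\|h^{-1}(v-\I_N^a v)\|_{0,a}^2\lesssim\|v\|_A^2$. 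The middle sum I bound by regrouping the indices $i$ by generation and applying the disjointness estimate~\eqref{eq:sumtilde}, which gives the bound $L\|\tilde v\|_A^2$, and then $\|\tilde v\|_A^2\lesssim\|h^{-1}\tilde v\|_{0,a}^2\lesssim L\,|v|_{1,a}^2$ (inverse inequality and \eqref{eq:IT}), so this term is $\lesssim L^2\|v\|_A^2$. The first sum I bound by the strengthened Cauchy--Schwarz inequality Lemma~\ref{lm:scs} (with the auxiliary functions $u_i=P_i\sum_{j>i}v_j$ as in the proof of Theorem~\ref{th:vp}), reducing it to $\|v-v_0\|_A^2+\sum_{i=1}^N h_i^{-2}\|v_i\|_{0,a}^2\lesssim c_d(L)\|v\|_A^2$ by the stable decomposition~\eqref{eq:stdav}. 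Adding the three contributions gives $c_0\lesssim L^2+c_d(L)\lesssim c_d(L)$, which completes the argument.

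I do not anticipate a real obstacle here: all of the analytic machinery --- the approximation and energy-stability of the local quasi-interpolation $\I_h^a$, the stable decomposition of Theorem~\ref{th:stda}, the strengthened Cauchy--Schwarz inequality, and the X--Z identity --- is already available, so the corollary is a bookkeeping consequence. The one point worth emphasizing is that the possibly large factor $c_d(L)$ enters only through the first sum, via~\eqref{eq:stdav}; it is $L^2$ when $d=2$ but grows like $2^L$ when $d=3$, and, exactly as in the remark following Theorem~\ref{th:stda}, under a quasi-monotonicity hypothesis on the coefficients this term can be replaced by a low power of $L$, sharpening the corollary accordingly.
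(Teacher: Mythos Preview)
Your proposal is correct and follows exactly the route the paper takes: the corollary is stated immediately after Theorem~\ref{th:vp} with the remark that it follows by the same proof, and you have reproduced precisely that argument (X--Z identity plus the three-term estimate of $c_0$ via the stable decomposition, the strengthened Cauchy--Schwarz inequality, and the generation regrouping). Your extra line bounding $\|\tilde v\|_A^2$ in the middle term is a small completion of a step the paper leaves implicit, but otherwise the approaches coincide.
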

This corollary implies that multigrid alone is not robust, especially in 3D. In this case, the convergence rate of multigrid will be proportional to $1-2^{-L} \simeq 1-h^{-1}_{\min},$ which deteriorates rapidly as the mesh size become small.  Remark \ref{rk:wst} is also applicable here, i.e., all the above estimates
are estimates for the worst case. For the special circumstances mentioned in
Remark \ref{rk:wst}, the estimates can be improved in the same way. 

\section{Numerical Experiments}
\label{sec:num}
In this section, we present some numerical experiments to support the theoretical results in previous sections. In the implementation of the adaptive loop, we use a modification of the error indicator presented in~\cite{Petzoldt.M2002}. Some other a posteriori error indicators for jump coefficients problem \eqref{eq:model} can be found in~\cite{Bernardi.C;Verfurth.R2000,Chen.Z;Dai.S2002,Vohralik.M2008,Cai.Z;Zhang.S2009}. The adaptive algorithm using different error indicators will generate different grids. However, we  emphasize that the robustness of the local adaptive multilevel preconditioners is independent of how the grids are generated in the refinement procedure. 

The implementation of the BPX preconditioner and the multigrid methods are standard, and can be found in, for example,~\cite{Briggs.W;Henson.V;McCormick.S2000,Xu.J1997b}. The implementation of the PCG algorithm can be found in~\cite{Golub.G;Van-Loan.C1996,Saad.Y2003}. All numerical examples are implemented by using $i$FEM~\cite{Chen.L2008}. We only present three-dimensional examples here and refer to~\cite{Chen.L;Zhang.C2007} for two-dimensional ones. 
In the PCG algorithm, we use the stopping criterion 
$$
\frac{\|u^k-u^{k-1}\|_A}{\|u^k\|_A} \leq 10^{-10}.
$$ 

In the implementation of the local multilevel preconditioners, we use an algorithm for coarsening bisection grids introduced by \cite{Chen.L;Zhang.C2007} for two dimensional case and~\cite{Chen.L2008} for three dimensional one. The coarsening algorithm will find all compatible bisections and regroup them, with possibly different generations, into groups $\cup _{l=1}^{L'} G(l)= \{1, 2, \cdots, N\}$ such that for any $i,j\in G(l), \omega_{j} \cap \omega_{i} =\varnothing.$ 
Each coarsening step is corresponding to a \emph{level} in the multilevel terminology, and the total number of levels is $L'.$
There are two major benefits of using this coarsening algorithm.
\begin{enumerate}
	\item We do not need to store the complex bisection tree structure of the refinement procedure explicitly in the algorithm. Instead, we only need the grid information on the finest level and the coarsening subroutine will restore multilevel structure.
	\item  Our numerical evidence shows that the number of nodes will decrease around one half in one coarsening step. Therefore the constant $L'$ is much smaller than the maximal generation $L\eqsim |\log h_{\min}|$.
\end{enumerate}
In what follows, we will use some shorthand notation for the different algorithms implemented.
\begin{itemize}
	\item TPSMG stands for the $V$-cycle multigrid with \emph{Three-Point Smoothing} (TPS), which only performs smoothing on new vertices and their two direct neighbors sharing the same edge.
	\item TPSMGCG is the PCG algorithm using the TPSMG as preconditioner.
	\item TPSBPXCG  is the additive version of TPSMG preconditioner.
\end{itemize}
Among all these algorithms, the main focus of this paper is the behavior of TPSMGCG and TPSBPXCG. In the numerical experiments below, we also report some results for TPSMG  for comparison. 

Inspired by~\cite{Oswald.P1999c,Xu.J1991,Xu.J;Zhu.Y2008}, we consider solving the model equation \eqref{eq:model} in the cubic domain $\Omega = (-1,1)^3.$ Let the coefficient
$a(x)$ be the constants $a_1=a_2=1$ and $a_3=\varepsilon$ on the three regions $\Omega_1,\;\Omega_2$ and $\Omega_3$ respectively (see Figure~\ref{fig:domain}), where
$$
  \Omega_1=(-0.5, 0)^3,\Omega_2=(0,0.5)^3
\; \hbox{ and }\; \Omega_3=\Omega\setminus (\overline{\Omega}_1\cup\overline{\Omega}_2).
$$ 
\begin{figure}[htbp]
    \begin{center}
        \includegraphics[scale=0.8]{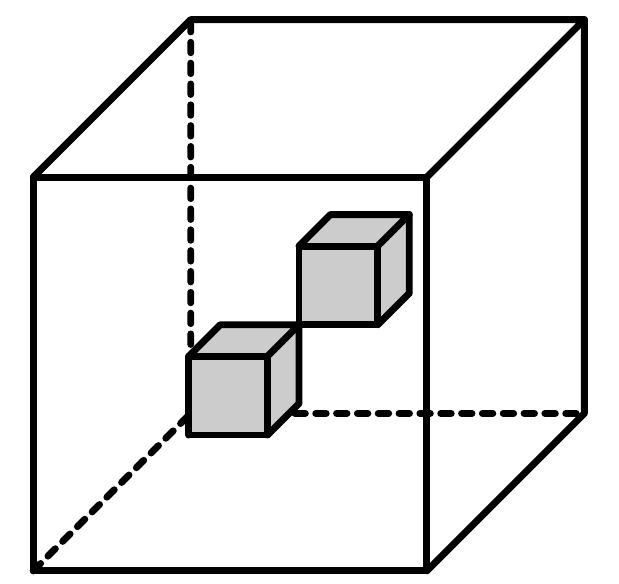}
    \end{center}
    \caption{\it The coefficients $a_{1} =a_{2}=1$ in the gray domains $\Omega _1$ and $\Omega _2,$ and $a_{3} = \varepsilon$ in the rest of the domain.}\label{fig:domain}
\end{figure}
We choose $f=1$ and impose the following boundary conditions: Dirichlet conditions $$u_{\{-1\}\times[-1,1]\times[-1,1]}=0, \quad u_{\{1\}\times[-1,1]\times[-1,1]} =1,$$ and homogenous Neumann boundary conditions on the remaining boundary.  For this problem, singularities occur along edges of $\Omega _1$ and $\Omega _2$. Figure \ref{fig:ex2-j10} shows an adaptive mesh and the corresponding finite element approximation after several iterations of the adaptive algorithm. To view the mesh around the singularity, we only show half of the domain $\Omega.$ 


\begin{figure}[htp]
\centering \includegraphics[height=5.2cm]{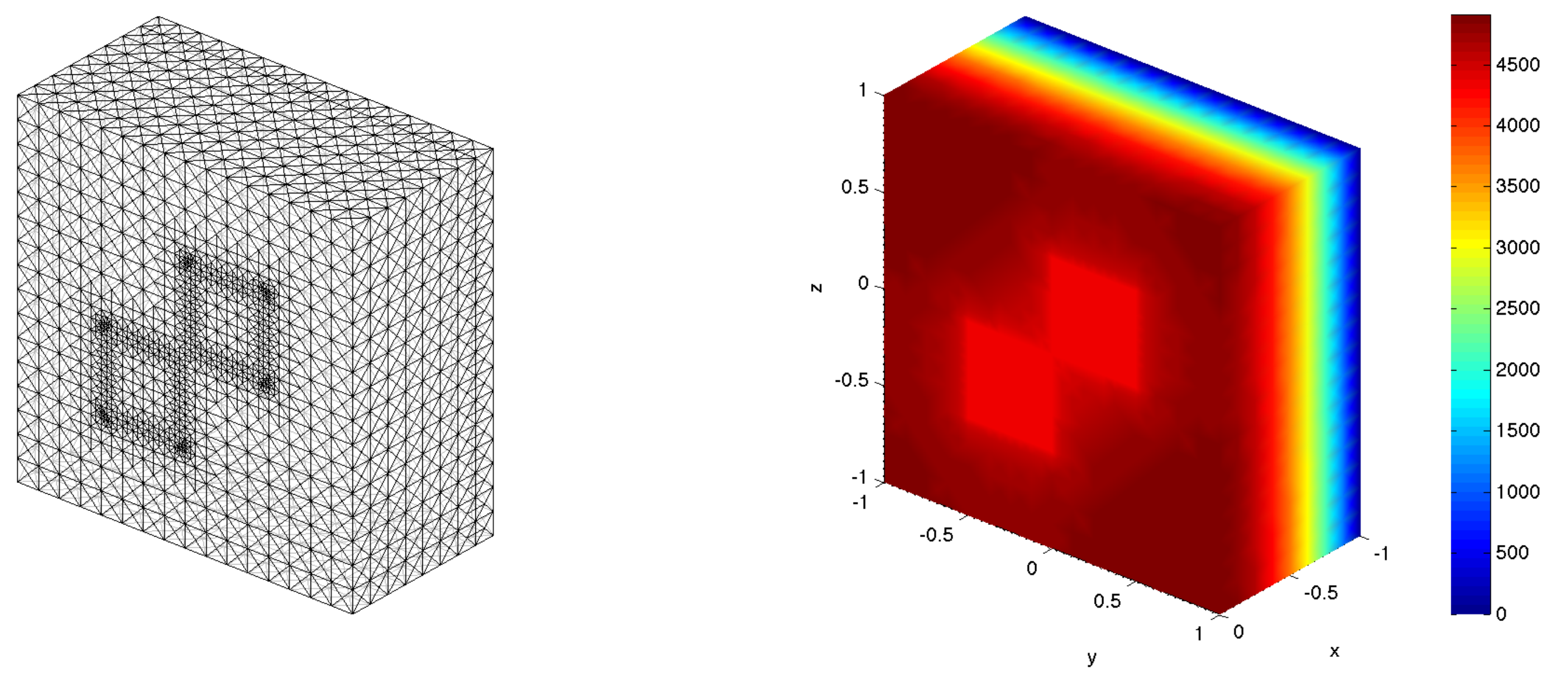}
\caption{An adaptive mesh and finite element solution with $\varepsilon = 10^{-4} $ and  $36466$ vertices.}\label{fig:ex2-j10}
\end{figure}

Tables~\ref{tab:Ex2TPSn4}-~\ref{tab:Ex2TPS4} give comparisons of the number of iterations for three different algorithms: TPSMG,  TPSMGCG  and TPSBPXCG algorithms, respectively, with the choice of $\varepsilon= 10^{-4}, 10^{-2}, 10^{2}$ and $10^{4}$. As we observe from these tables, the number of iterations for TPSMG algorithm grows rapidly as the mesh is refined when $\varepsilon$ is small. On the other hand,  the number of iterations for TPSMGCG and TPSBPXCG is very robust and only grows a little bit when the mesh is refined, as we expected from the theory.  We also observe that if $\varepsilon$ is large, the TPSMG algorithm will converge uniformly. This is because the coefficient in $\Omega_{3}$, which contains the Dirichlet boundary, is dominant. In this case, we could use the standard multigrid analysis (as in \cite{Xu.J1992a}) to show the robustness of the preconditioners.
\begin{table}[htdp]
\begin{center}
\begin{tabular}{|c||c|c|c|} 
 \hline
 DOF  & TPSMG  & TPSMGCG & TPSBPXCG \\
 \hline \hline
4913 &  41& 13 & 18\\ 
 \hline 
5505 & 62& 15 & 18\\ 
 \hline 
6617 & 89 & 18 & 21\\ 
 \hline 
8666 & 99& 19 & 19\\ 
 \hline 
10585 & 98 &19  &20\\ 
 \hline 
12411 & 125& 23 & 25\\ 
 \hline 
16353 & 154& 23 & 23\\ 
 \hline 
21248 & 182& 22 & 23\\ 
 \hline 
27755 & 197& 26 & 32\\ 
 \hline 
36466 & 178& 27 & 29\\ 
 \hline 
43271 & 238& 25& 30\\ 
 \hline 
51163 & 283& 28 & 36\\ 
 \hline 
72349 & 395& 32 & 34\\ 
 \hline 
89146 & 424& 31& 34\\ 
 \hline 
104747 & 413& 34 & 38\\ 
 \hline 
\end{tabular} 
\end{center}
\caption{Comparison of Number of Iterations for TPSMG,  TPSMGCG and TPSBPXCG when $\varepsilon = 10^{-4}$.}
\label{tab:Ex2TPSn4}
\end{table}%

\begin{table}[htdp]
\begin{center}
\begin{tabular}{|c||c|c|c|} 
 \hline
 DOF  & TPSMG  & TPSMGCG & TPSBPXCG \\
 \hline \hline
 4913 & 46&  13 & 17\\ 
 \hline 
5550 & 51& 15 &17\\ 
 \hline 
6743 & 61& 17 & 20\\ 
 \hline 
8907 & 65& 16 &19\\ 
 \hline 
10729 & 66& 17 & 20\\ 
 \hline 
13281 & 86& 20  &24\\ 
 \hline 
17146 & 90& 20 & 21\\ 
 \hline 
23139 & 90& 20 & 24\\ 
 \hline 
28613 & 160& 25& 29\\ 
 \hline 
37338 & 175& 24 & 27\\ 
 \hline 
43610 & 149& 22 & 26\\ 
 \hline 
52715 & 154& 25 & 31\\ 
 \hline 
72967 & 238& 28 & 29\\ 
 \hline 
89320 & 165& 25&33\\ 
 \hline 
113131 & 294& 30 & 38\\ 
 \hline 
\end{tabular} 
\end{center}
\caption{Comparison of Number of Iterations for TPSMG,  TPSMGCG and TPSBPXCG when $\varepsilon = 10^{-2}$.}
\label{tab:Ex2TPSn2}
\end{table}%

\begin{table}[htdp]
\begin{center}
\begin{tabular}{|c||c|c|c|} 
 \hline
 DOF  & TPSMG  & TPSMGCG & TPSBPXCG \\
 \hline \hline
4913 & 16& 10 & 14\\ 
 \hline 
5279 & 37& 15 & 15\\ 
 \hline 
5867 & 43& 17 & 18\\ 
 \hline 
6522 & 48& 16& 19\\ 
 \hline 
7562 & 68& 17& 18\\ 
 \hline 
9493 & 61& 17& 18\\ 
 \hline 
11858 & 49& 15& 18\\ 
 \hline 
15257 & 68& 15&18\\ 
 \hline 
20649 & 61& 16&19\\ 
 \hline 
27946 & 49& 17& 21\\ 
 \hline 
36735 & 52& 16& 20\\ 
 \hline 
48890 & 58& 16 & 22\\ 
 \hline 
68297 & 71& 18& 22\\ 
 \hline 
89872 & 55& 16 & 21\\ 
 \hline 
119109 & 61& 17& 23\\ 
 \hline 
\end{tabular} 
\end{center}
\caption{Comparison of Number of Iterations for TPSMG,  TPSMGCG and TPSBPXCG when $\varepsilon = 10^{2}$.}
\label{tab:Ex2TPS2}
\end{table}%

\begin{table}[htdp]
\begin{center}
\begin{tabular}{|c||c|c|c|} 
 \hline
 DOF  & TPSMG  & TPSMGCG & TPSBPXCG \\
 \hline \hline
4913 & 16& 10 & 14\\ 
 \hline 
5269 & 37&  15 & 15\\ 
 \hline 
5863 & 42& 17& 18\\ 
 \hline 
6493 & 45& 16& 18\\ 
 \hline 
7531 & 68& 17& 18\\ 
 \hline 
9419 & 59& 16 & 17\\ 
 \hline 
11721 & 46& 15& 18\\ 
 \hline 
14941 & 69& 15& 18\\ 
 \hline 
20065 & 59& 16& 19\\ 
 \hline 
27199 & 47& 17 & 21\\ 
 \hline 
35601 & 59& 16 & 20\\ 
 \hline 
47743 & 55& 16& 22\\ 
 \hline 
66989 & 71& 18& 21\\ 
 \hline 
88079 & 57& 16& 21\\ 
 \hline 
116739 & 56& 17& 23\\ 
 \hline 
\end{tabular} 
\end{center}
\caption{Comparison of Number of Iterations for TPSMG,  TPSMGCG and TPSBPXCG when $\varepsilon = 10^{4}$.}
\label{tab:Ex2TPS4}
\end{table}%

Figure \ref{fig:ex2-eigenvalues} shows the eigenvalue distributions for the TPSMGCG and TPSBPXCG preconditioned systems. As we can see from the figure, there is one small eigenvalue for both preconditioned systems. This agrees with the theoretical results,  the number of small eigenvalues is bounded by the number of floating subdomains $m_{0}\equiv 2.$ 
\begin{figure}[htbp]
\subfigure[Eigenvalues for TPSBPXCG]{
\centering
\includegraphics[height=5.7cm]{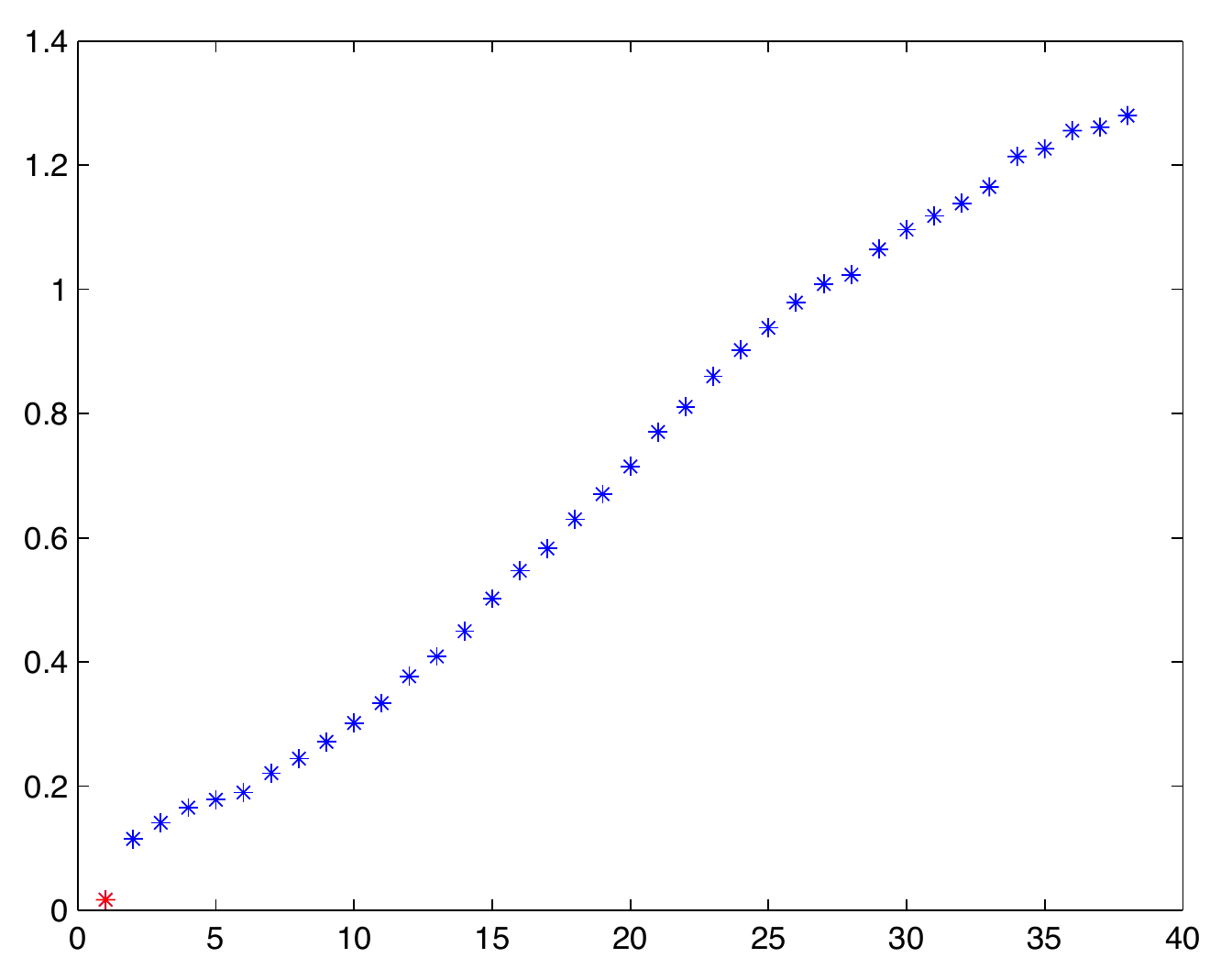}
}
\subfigure[Eigenvalues for TPSMGCG]{
\centering
\includegraphics*[height=5.7cm]{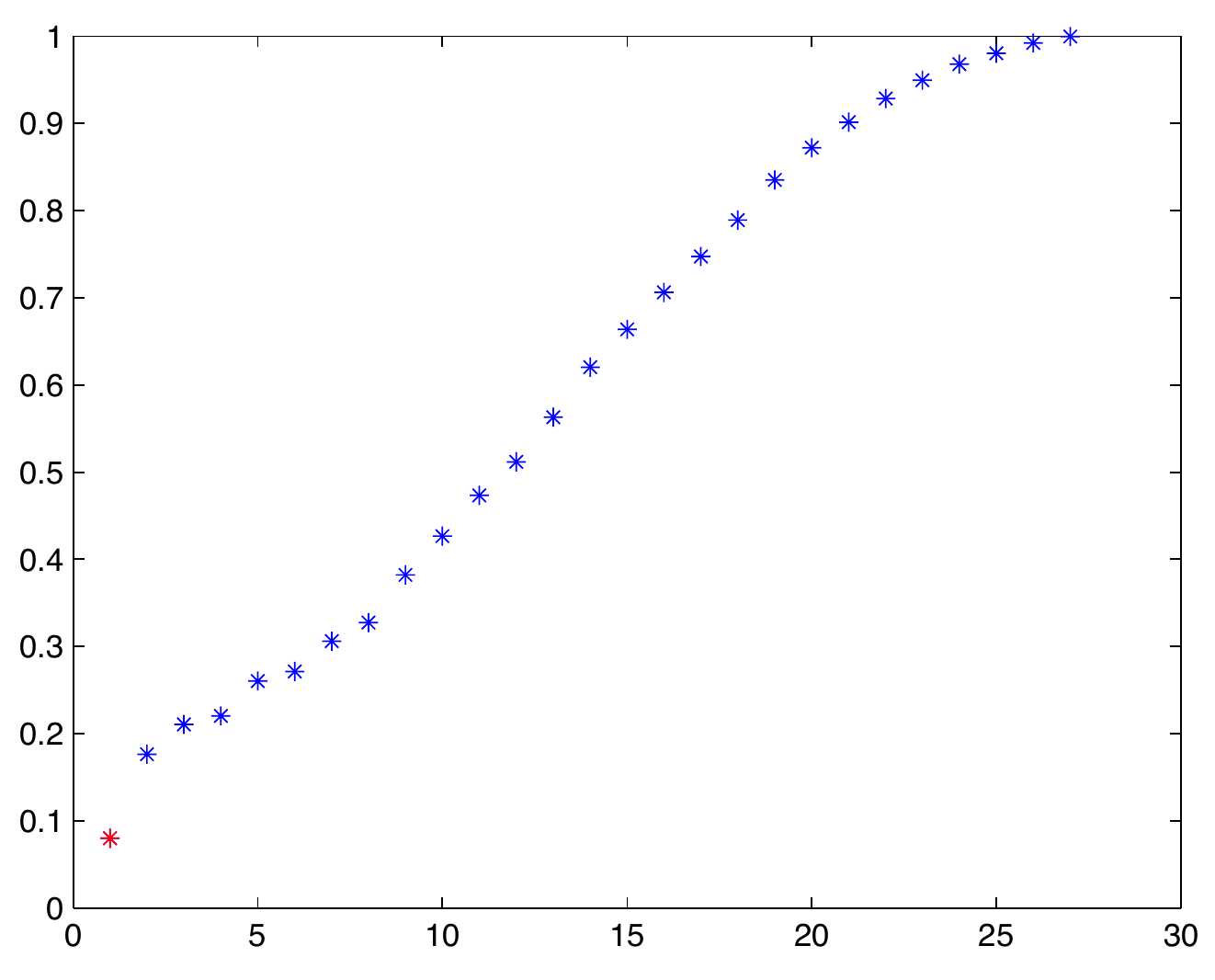}
}
\caption{Example 2: Eigenvalues of $BA$ when $\varepsilon = 10^{-4}$ with $12411$  vertices}
\label{fig:ex2-eigenvalues}
\end{figure}

Figure~\ref{fig:ex2-cond} shows the condition number and effective condition number of TPSBPXCG  and TPSMGCG preconditioned systems. From Figure~\ref{fig:ex2-cond}, we observed that when $\varepsilon$ is small, the condition number deteriorates ($\kappa(BA)\in [3, 1100]$ for TPSBPXCG, and $\kappa(BA) \in [3, 125]$ for TPSMGCG as we can see from the figure).  
On the other hand, if we get rid of the first small eigenvalue, the effective condition number $\kappa_{1}(BA)$ of TPSBPXCG  and TPSMGCG preconditioned systems (the black and red lines, respectively) are almost identical for different $\varepsilon.$ This indicates that the effective condition numbers are uniform with respect to the jumps. Moreover, as we can see from Figure~\ref{fig:ex2-cond}, $\kappa_{1}(BA)$ are mildly increasing with respect to the DOFs ($\kappa_{1}(BA)\in [1,80]$ for TPSBPXCG, and $\kappa_{1}(BA) \in [1,30]$ for TPSMGCG).
These results agree with our theoretical expectations from Section~\ref{sec:precond}.
\begin{figure}[htbp]
\subfigure[TPSBPXCG]{
\centering
\includegraphics[height=5.7cm]{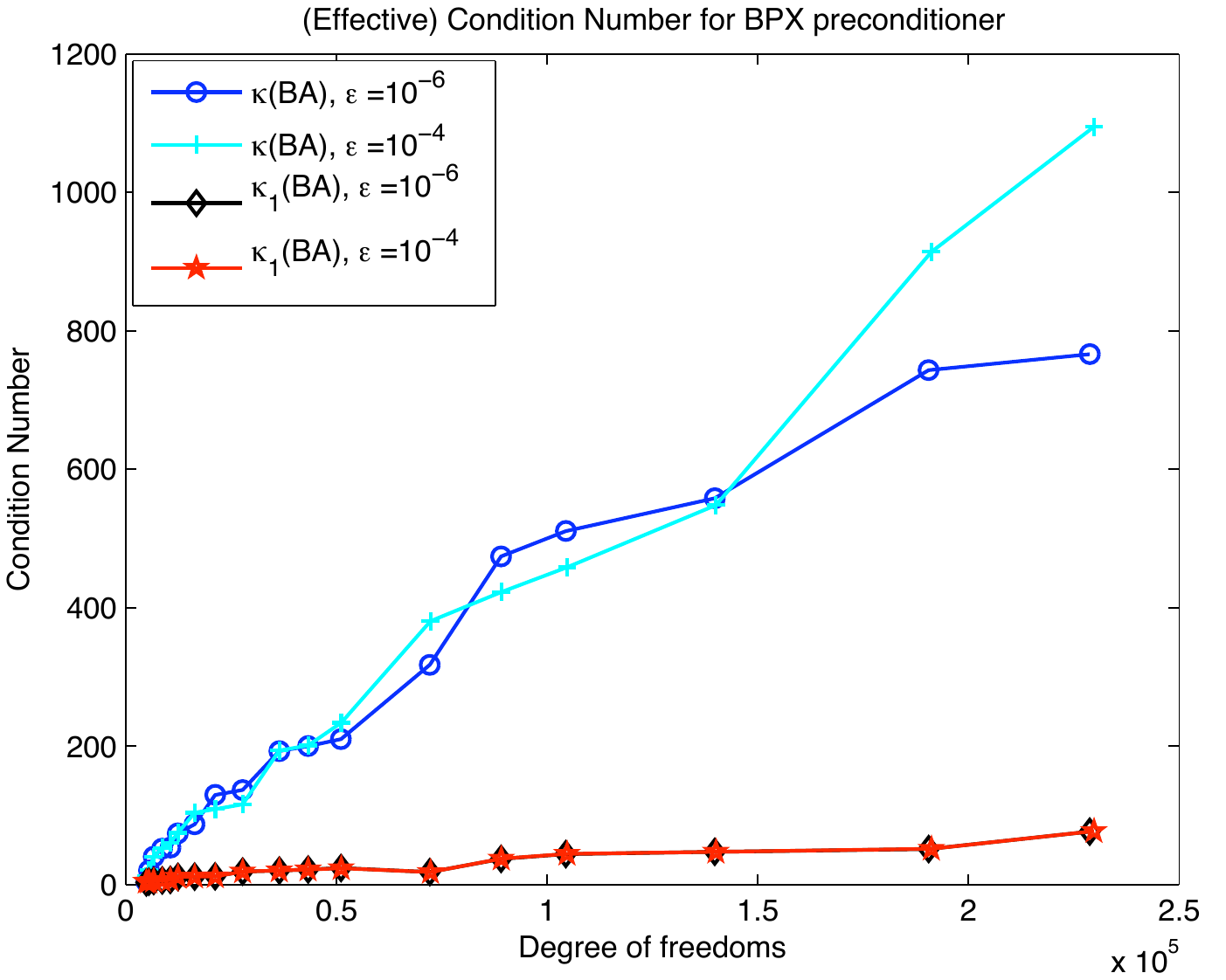}
}
\subfigure[TPSMGCG]
{
\centering
\includegraphics*[height=5.7cm]{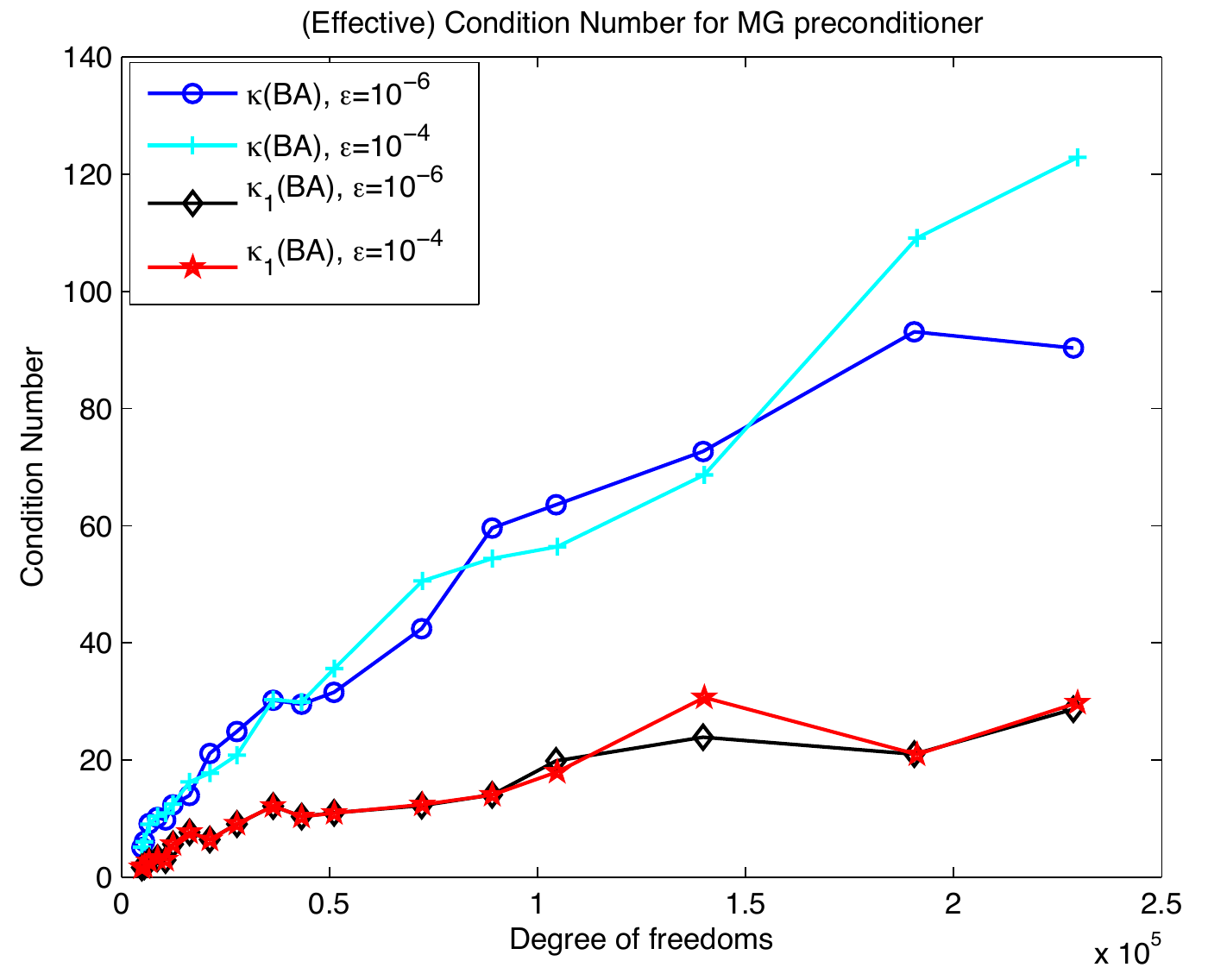}
}
\caption{Example 2: $\kappa(BA)$ and $\kappa_{1}(BA)$ for the cases $\varepsilon =10^{-6}, 10^{-4}$ w.r.t the DOFs.}
\label{fig:ex2-cond}
\end{figure}

\section{Conclusion}
In this paper, we designed local multilevel preconditioners based on the decomposition of the finite element space into 3-point subspaces for the highly graded mesh obtained from adaptive bisection algorithms. To analyze the behavior of the local multilevel preconditioners, we introduced a local interpolation operator and proved some approximation and stability properties of it. Based on these properties, we showed the decomposition of the finite element space is stable, which is a key ingredient  in the multilevel analysis. This enabled us to analyze the eigenvalue distributions of the preconditioned systems. In particular, we showed that there are only a small fixed number of eigenvalues that are deteriorated by the coefficients and mesh size, and the other eigenvalues are uniformly bounded with respect to the coefficients and logarithmically depends on the mesh size. As a result, we proved the asymptotic convergence rate of the PCG algorithm is uniform with respect to the coefficient and nearly uniform with respect to the mesh size. Moreover, the overall computation complexity of these multilevel preconditioner are nearly optimal. Numerical experiments justified our theoretical results.

\section*{Acknowledgement}
The first author is supported in part by NSF Grant DMS-0811272, NIH Grant P50GM76516 and R01GM75309. This work is also partially supported by the Beijing International Center for Mathematical Research. The second and fourth authors were supported in part by
   NSF Awards 0715146 and 0915220, and DTRA Award HDTRA-09-1-0036.  The third author was 
supported in part by NSF DMS-0609727, NSFC-10528102 and Alexander von
Humboldt Research Award for Senior US Scientists. 

\begin{thebibliography}{10}

\bibitem{Aksoylu.B;Graham.I;Klie.H;Scheichl.R2008}
B.~Aksoylu, I.~Graham, H.~Klie, and R.~Scheichl.
\newblock {Towards a rigorously justified algebraic preconditioner for
  high-contrast diffusion problems}.
\newblock {\em Computing and Visualization in Science}, 11(4):319--331, 2008.

\bibitem{Aksoylu.B;Holst.M2006}
B.~Aksoylu and M.~Holst.
\newblock Optimality of multilevel preconditioners for local mesh refinement in
  three dimensions.
\newblock {\em SIAM Journal on Numerical Analysis}, 44(3):1005--1025, 2006.

\bibitem{Alcouffe.R;Brandt.A;Dendy.J;Painter.J1981}
R.~E. Alcouffe, A.~Brandt, J.~E. Dendy, and J.~W. Painter.
\newblock The multi--grid methods for the diffusion equation with strongly
  discontinuous coefficients.
\newblock {\em SIAM Journal on Scientific and Statistical Computing},
  2:430--454, 1981.

\bibitem{Axelsson.O1994}
O.~Axelsson.
\newblock {\em Iterative solution methods}.
\newblock Cambridge University Press, Cambridge, 1994.

\bibitem{Axelsson.O2003}
O.~Axelsson.
\newblock Iteration number for the conjugate gradient method.
\newblock {\em Mathematics and Computers in Simulation}, 61(3-6):421--435,
  2003.
\newblock MODELLING 2001 (Pilsen).

\bibitem{Bai.D;Brandt.A1987}
D.~Bai and A.~Brandt.
\newblock Local mesh refinement multilevel techniques.
\newblock {\em SIAM Journal on Scientific and Statistical Computing},
  8(2):109--134, 1987.

\bibitem{Bank.R1996}
R.~E. Bank.
\newblock Hierarchical bases and the finite element method.
\newblock {\em Acta Numerica}, 5:1--43, 1996.

\bibitem{Bank.R;Dupont.T;Yserentant.H1988}
R.~E. Bank, T.~Dupont, and H.~Yserentant.
\newblock The hierarchical basis multigrid method.
\newblock {\em Numerische Mathematik}, 52:427--458, 1988.

\bibitem{Bernardi.C;Verfurth.R2000}
C.~Bernardi and R.~Verf{\"u}rth.
\newblock {Adaptive finite element methods for elliptic equations with
  non-smooth coefficients}.
\newblock {\em Numerische Mathematik}, 85(4):579--608, 2000.

\bibitem{Binev.P;Dahmen.W;Devore.R2004}
P.~Binev, W.~Dahmen, and R.~DeVore.
\newblock Adaptive finite element methods with convergence rates.
\newblock {\em Numerische Mathematik}, 97(2):219--268, 2004.

\bibitem{Bramble.J;Pasciak.J;Xu.J1990}
J.~H. Bramble, J.~E. Pasciak, and J.~Xu.
\newblock Parallel multilevel preconditioners.
\newblock {\em Mathematics of Computation}, 55(191):1--22, 1990.

\bibitem{Bramble.J;Xu.J1991}
J.~H. Bramble and J.~Xu.
\newblock Some estimates for a weighted ${L}^2$ projection.
\newblock {\em Mathematics of Computation}, 56:463--476, 1991.

\bibitem{Briggs.W;Henson.V;McCormick.S2000}
W.~L. Briggs, V.~E. Henson, and S.~F. McCormick.
\newblock {\em A multigrid tutorial}.
\newblock Society for Industrial and Applied Mathematics (SIAM), Philadelphia,
  PA, second edition, 2000.

\bibitem{Cai.Z;Zhang.S2009}
Z.~Cai and S.~Zhang.
\newblock Recovery-based error estimator for interface problems: Conforming
  linear elements.
\newblock {\em SIAM Journal on Numerical Analysis}, 47(3):2132--2156, 2009.

\bibitem{Cascon.J;Kreuzer.C;Nochetto.R;Siebert.K2008}
J.~M. Cascon, C.~Kreuzer, R.~H. Nochetto, and K.~G. Siebert.
\newblock Quasi-optimal convergence rate for an adaptive finite element method.
\newblock {\em SIAM Journal on Numerical Analysis}, 46(5):2524--2550, 2008.

\bibitem{Chan.T;Wan.W2000}
T.~F. Chan and W.~L. Wan.
\newblock Robust multigrid methods for nonsmooth coefficient elliptic linear
  systems.
\newblock {\em Journal of Computational and Applied Mathematics},
  123(1-2):323--352, 2000.

\bibitem{Chen.L2006a}
L.~Chen.
\newblock Short implementation of bisection in {MATLAB}.
\newblock In P.~Jorgensen, X.~Shen, C.-W. Shu, and N.~Yan, editors, {\em Recent
  Advances in Computational Sciences -- Selected Papers from the International
  Workship on Computational Sciences and Its Education}, pages 318 --332. World
  Scientific Pub Co Inc, 2007.

\bibitem{Chen.L2008}
L.~Chen.
\newblock {$i$FEM}: an integrate finite element methods package in {MATLAB}.
\newblock Technical report, University of California at Irvine, 2009.

\bibitem{Chen.L;Nochetto.R;Xu.J2007}
L.~Chen, R.~H. Nochetto, and J.~Xu.
\newblock Optimal multilevel methods for graded bisection grids.
\newblock {\em  Numerische Mathematik}, 2011.

\bibitem{Chen.L;Zhang.C2007}
L.~Chen and C.-S. Zhang.
\newblock A coarsening algorithm and multilevel methods on adaptive grids by
  newest vertex bisection.
\newblock {\em J. Comp. Math.}, 28(6):767-789, 2010.

\bibitem{Chen.Z;Dai.S2002}
Z.~Chen and S.~Dai.
\newblock On the efficiency of adaptive finite element methods for elliptic
  problems with discontinuous coefficients.
\newblock {\em SIAM Journal on Scientific Computing}, 24(2):443--462, 2002.

\bibitem{Coomer.R;Graham.I1996}
R.~K. Coomer and I.~G. Graham.
\newblock Massively parallel methods for semiconductor device modelling.
\newblock {\em Computing}, 56(1):1--27, 1996.

\bibitem{Dryja.M;Sarkis.M;Widlund.O1996}
M.~Dryja, M.~V. Sarkis, and O.~B. Widlund.
\newblock Multilevel {Schwarz} methods for elliptic problems with discontinuous
  coefficients in three dimensions.
\newblock {\em Numerische Mathematik}, 72(3):313--348, 1996.

\bibitem{Golub.G;Van-Loan.C1996}
G.~H. Golub and C.~F. Van~Loan.
\newblock {\em Matrix computations}.
\newblock Johns Hopkins Studies in the Mathematical Sciences. Johns Hopkins
  University Press, Baltimore, MD, third edition, 1996.

\bibitem{Graham.I;Lechner.P;Scheichl.R2007}
I.~Graham, P.~Lechner, and R.~Scheichl.
\newblock Domain decomposition for multiscale pdes.
\newblock {\em Numerische Mathematik}, 106(4):589--626, June 2007.

\bibitem{Graham.I;Hagger.M1999}
I.~G. Graham and M.~J. Hagger.
\newblock Unstructured additive schwarz-conjugate gradient method for elliptic
  problems with highly discontinuous coefficients.
\newblock {\em SIAM Journal on Scientific Computing}, 20:2041--2066, 1999.

\bibitem{Heise.B;Kuhn.M1996}
B.~Heise and M.~Kuhn.
\newblock Parallel solvers for linear and nonlinear exterior magnetic field
  problems based upon coupled {FE}/{BE} formulations.
\newblock {\em Computing}, 56(3):237--258, 1996.
\newblock International GAMM-Workshop on Multi-level Methods (Meisdorf, 1994).

\bibitem{Hiptmair.R;Zheng.W2009a}
R.~Hiptmair and W.~Zheng.
\newblock {Local Multigrid in H (curl)}.
\newblock {\em Journal of Computational Mathematics}, 27(5):573--603, 2009.

\bibitem{Kees.C;Miller.C;Jenkins.E;Kelley.C2003}
C.~E. Kees, C.~T. Miller, E.~W. Jenkins, and C.~T. Kelley.
\newblock Versatile two-level {S}chwarz preconditioners for multiphase flow.
\newblock {\em Comput. Geosci.}, 7(2):91--114, 2003.

\bibitem{Kossaczky.I1994}
I.~Kossaczky.
\newblock A recursive approach to local mesh refinement in two and three
  dimensions.
\newblock {\em Journal of Computational and Applied Mathematics}, 55:275--288,
  1994.

\bibitem{Meza.J;Tuminaro.R1996}
J.~Meza and R.~Tuminaro.
\newblock {A Multigrid Preconditioner for the Semiconductor Equations}.
\newblock {\em SIAM Journal on Scientific Computing}, 17:118--132, 1996.

\bibitem{Mitchell.W1989}
W.~F. Mitchell.
\newblock A comparison of adaptive refinement techniques for elliptic problems.
\newblock {\em ACM Transactions on Mathematical Software (TOMS) archive},
  15(4):326 -- 347, 1989.

\bibitem{Mitchell.W1992}
W.~F. Mitchell.
\newblock Optimal multilevel iterative methods for adaptive grids.
\newblock {\em SIAM Journal on Scientific and Statistical Computing},
  13:146--167, 1992.

\bibitem{Nochetto.R;Siebert.K;Veeser.A2009}
R.~Nochetto, K.~Siebert, and A.~Veeser.
\newblock {Theory of adaptive finite element methods: An introduction}.
\newblock In R.~DeVore and A.~Kunoth, editors, {\em Multiscale, Nonlinear and
  Adaptive Approximation}, pages 409--542. Springer, 2009.
\newblock Dedicated to Wolfgang Dahmen on the Occasion of His 60th Birthday.

\bibitem{Oswald.P1994}
P.~Oswald.
\newblock {\em Multilevel Finite Element Approximation, Theory and
  Applications}.
\newblock Teubner Skripten zur Numerik. Teubner Verlag, Stuttgart, 1994.

\bibitem{Oswald.P1999c}
P.~Oswald.
\newblock On the robustness of the {BPX}-preconditioner with respect to jumps
  in the coefficients.
\newblock {\em Mathematics of Computation}, 68:633--650, 1999.

\bibitem{Petzoldt.M2002}
M.~Petzoldt.
\newblock A posteriori error estimators for elliptic equations with
  discontinuous coefficients.
\newblock {\em Advances in Computational Mathematics}, 16(1):47--75, 2002.

\bibitem{Saad.Y2003}
Y.~Saad.
\newblock {\em Iterative methods for sparse linear systems}.
\newblock Society for Industrial and Applied Mathematics, Philadelphia, PA,
  second edition, 2003.

\bibitem{Sarkis.M1997}
M.~Sarkis.
\newblock Nonstandard coarse spaces and schwarz methods for elliptic problems
  with discontinuous coefficients using non-conforming elements.
\newblock {\em Numerische Mathematik}, 77(3):383--406, 1997.

\bibitem{Scheichl.R;Vainikko.E2007}
R.~Scheichl and E.~Vainikko.
\newblock Additive schwarz with aggregation-based coarsening for elliptic
  problems with highly variable coefficients.
\newblock {\em Computing}, 80(4):319--343, Sept. 2007.

\bibitem{Scott.R;Zhang.S1990}
R.~Scott and S.~Zhang.
\newblock Finite element interpolation of nonsmooth functions satisfying
  boundary conditions.
\newblock {\em Mathematics of Computation}, 54:483--493, 1990.

\bibitem{Stevenson.R1998}
R.~Stevenson.
\newblock Stable three-point wavelet bases on general meshes.
\newblock {\em Numerische Mathematik}, 80(1):131--158, 1998.

\bibitem{Stevenson.R2008}
R.~Stevenson.
\newblock The completion of locally refined simplicial partitions created by
  bisection.
\newblock {\em Mathemathics of Computation}, 77:227--241, 2008.

\bibitem{Vohralik.M2008}
M.~Vohral{\i}k.
\newblock Guaranteed and fully robust a posteriori error estimates for
  conforming discretizations of diffusion problems with discontinuous
  coefficients.
\newblock Technical Report Preprint R08009, Laboratoire Jacques-Louis Lions,
  2008.

\bibitem{Vuik.C;Segal.A;Meijerink.J1999}
C.~Vuik, A.~Segal, and J.~A. Meijerink.
\newblock An efficient preconditioned cg method for the solution of a class of
  layered problems with extreme contrasts in the coefficients.
\newblock {\em Journal of Computational Physics}, 152(1):385--403, June 1999.

\bibitem{Wang.J1994}
J.~Wang.
\newblock New convergence estimates for multilevel algorithms for
  finite-element approximations.
\newblock {\em Journal of Computational and Applied Mathematics}, 50:593--604,
  1994.

\bibitem{Wang.J;Xie.R1994}
J.~Wang and R.~Xie.
\newblock Domain decomposition for elliptic problems with large jumps in
  coefficients.
\newblock In {\em the Proceedings of Conference on Scientific and Engineering
  Computing}, pages 74--86. National Defense Industry Press, 1994.

\bibitem{Wang.Z;Wang.C;Chen.K2001}
Z.~Wang, C.~Wang, and K.~Chen.
\newblock Two-phase flow and transport in the air cathode of proton exchange
  membrane fuel cells.
\newblock {\em J.Power Sources}, 94:40--50, 2001.

\bibitem{Widlund.O1992}
O.~B. Widlund.
\newblock Some {S}chwarz methods for symmetric and nonsymmetric elliptic
  problems.
\newblock In D.~E. Keyes, T.~F. Chan, G.~A. Meurant, J.~S. Scroggs, and R.~G.
  Voigt, editors, {\em Fifth International Symposium on Domain Decomposition
  Methods for Partial Differential Equations}, pages 19--36, Philadelphia,
  1992. SIAM.

\bibitem{Xu.J1991}
J.~Xu.
\newblock Counter examples concerning a weighted ${L}^{2}$ projection.
\newblock {\em Mathematics of Computation}, 57:563--568, 1991.

\bibitem{Xu.J1992a}
J.~Xu.
\newblock Iterative methods by space decomposition and subspace correction.
\newblock {\em SIAM Review}, 34:581--613, 1992.

\bibitem{Xu.J1992}
J.~Xu.
\newblock A new class of iterative methods for nonselfadjoint or indefinite
  problems.
\newblock {\em SIAM Journal on Numerical Analysis}, 29:303--319, 1992.

\bibitem{Xu.J1997b}
J.~Xu.
\newblock An introduction to multigrid convergence theory.
\newblock In R.~Chan, T.~Chan, and G.~Golub, editors, {\em Iterative Methods in
  Scientific Computing}. Springer-Verlag, 1997.

\bibitem{Xu.J;Chen.L;Nochetto.R2009}
J.~Xu, L.~Chen, and R.~Nochetto.
\newblock {Optimal multilevel methods for H (grad), H (curl), and H (div)
  systems on graded and unstructured grids}.
\newblock In {\em Multiscale, Nonlinear and Adaptive Approximation}, pages
  599--659. Springer, 2009.

\bibitem{Xu.J;Zhu.Y2008}
J.~Xu and Y.~Zhu.
\newblock Uniform convergent multigrid methods for elliptic problems with
  strongly discontinuous coefficients.
\newblock {\em Mathematical Models and Methods in Applied Science}, 18(1):77
  --105, 2008.

\bibitem{Xu.J;Zikatanov.L2002}
J.~Xu and L.~Zikatanov.
\newblock The method of alternating projections and the method of subspace
  corrections in {H}ilbert space.
\newblock {\em Journal of The American Mathematical Society}, 15:573--597,
  2002.

\bibitem{Yserentant.H1990}
H.~Yserentant.
\newblock Two preconditioners based on the multi-level splitting of finite
  element spaces.
\newblock {\em Numerische Mathematik}, 58:163--184, 1990.

\bibitem{Yserentant.H1993}
H.~Yserentant.
\newblock Old and new convergence proofs for multigrid methods.
\newblock {\em Acta Numerica}, pages 285--326, 1993.

\bibitem{Zhu.Y2008}
Y.~Zhu.
\newblock Domain decomposition preconditioners for elliptic equations with jump
  coefficients.
\newblock {\em Numerical Linear Algebra with Applications}, 15(2-3):271--289,
  2008.

\end{thebibliography}

\end{document}